\numberwithin{equation}{section}
\def\neweq#1{\begin{equation}\label{#1}}
\def\endeq{\end{equation}}
\newtheorem{theorem}{Theorem}[section]
\newtheorem{definition}{Definition}
\newtheorem{proposition}[theorem]{Proposition}
\newtheorem{lemma}[theorem]{Lemma}
\newtheorem{corollary}[theorem]{Corollary}
\newtheorem{remark}[theorem]{Remark}
\begin{document}

\title[Nonlinear elliptic equations in $\mathbb R^N\setminus \{0\}$ with Hardy potential ]{Sharp existence and classification results for nonlinear elliptic equations in $\mathbb R^N\setminus\{0\}$ with Hardy potential}

\author{Florica C. C\^irstea}

\address{Florica C. C\^irstea, School of Mathematics and Statistics, The University of Sydney, NSW 2006, Australia}
\email{florica.cirstea@sydney.edu.au}

\author{Maria F\u arc\u a\c seanu}

\address{Maria F\u arc\u a\c seanu, School of Mathematics and Statistics, The University of Sydney, NSW 2006, Australia}
\email{maria.farcaseanu@sydney.edu.au}

\date{\today}

\begin{abstract} 
For $N\geq 3$, by the seminal paper of Brezis and V\'eron (Arch. Rational Mech. Anal. 75(1):1--6, 1980/81), no positive solutions of 
$-\Delta u+u^q=0$ in $\mathbb R^N\setminus \{0\}$ exist if $q\geq N/(N-2)$; for $1<q<N/(N-2)$ 
the existence and profiles near zero of all positive $C^1(\mathbb R^N\setminus \{0\})$ solutions are given by Friedman and V\'eron (Arch. Rational Mech. Anal. 96(4):359--387, 1986). 

In this paper, 
for every $q>1$ and $\theta\in \mathbb R$, we
prove that the nonlinear elliptic problem $(\star)$ 
$-\Delta u-\lambda \,|x|^{-2}\,u+|x|^{\theta}u^q=0$ in $\mathbb R^N\setminus \{0\}$ with $u>0$ has a $C^1(\mathbb R^N\setminus \{0\})$ solution if and only if $\lambda>\lambda^*$, where $\lambda^*=\Theta(N-2-\Theta) $ with  $\Theta=(\theta+2)/(q-1)$.   
We show that (a) if $\lambda>(N-2)^2/4$, then  $U_0(x)=(\lambda-\lambda^*)^{1/(q-1)}|x|^{-\Theta}$ is the only solution of $(\star)$ and (b) 
if $\lambda^*<\lambda\leq (N-2)^2/4$, then all solutions of $(\star)$ are radially symmetric and their total set is   
$U_0\cup \{U_{\gamma,q,\lambda}:\ \gamma\in (0,\infty) \}$. We give the precise behavior of $ U_{\gamma,q,\lambda}$ 
near zero and at infinity, distinguishing between $1<q<q_{N,\theta}$ and $q>\max\{q_{N,\theta},1\}$, where $q_{N,\theta}=(N+2\theta+2)/(N-2)$. 

In addition, for $\theta\leq -2$ we settle the structure of the set of all 
positive solutions of $(\star)$ in $\Omega\setminus \{0\}$, subject to $u|_{\partial\Omega}=0$, where $\Omega$ is a smooth bounded domain containing zero, complementing the works of C\^{\i}rstea (Mem. Amer. Math. Soc.  227, 2014) and Wei--Du 
(J. Differential Equations 262(7):3864--3886, 2017).  

\end{abstract}
\maketitle

\section{Introduction and main results}\label{Sec1}

\subsection{Introduction} 
In two groundbreaking papers \cites{serrin64,serrin65}, Serrin studied {\em a priori} estimates of solutions, the removability of singularities, and the behavior of isolated singularities
for general quasilinear elliptic divergence-form equations
in $\Omega\setminus \{0\}$, where $\Omega\subseteq \mathbb R^N$ is a domain containing zero. 
The history of the isolated singularity problem, its challenges and significant achievements up to 1996 have been beautifully portrayed by V\'eron \cite{Veron1}. 
To address the interior isolated singularity problem for nonlinear elliptic equations under conditions outside the range of Serrin's papers is a very difficult and multifaceted task. This has fueled a lot of research in the last decade approaching the challenge from different viewpoints in specific and particular directions. 
A very active line of research (see, for example,  \cites{bidg,CRo,CRV,Cmem, CHEN2018,dan,fra1,fra2,Guerch,Invent,LD2019,wei-f})
is to explore the intricate links between the isolated singularity problem and 
singular potentials.  
Among these, the celebrated 
Hardy--Schr\"odinger operator (see $\mathbb L_\lambda$ in \eqref{line}) and, more generally, the Hardy--Sobolev operator play a prominent role.
Elliptic differential operators of this type are important in the famous Caffarelli--Kohn--Nirenberg inequalities,
being analyzed in 
connection with the best constants and symmetry (or symmetry breaking) of extremal functions \cites{cat,cat2}. 
Recent developments and challenges on such topics,  
which have significance to diverse areas such as 
quantum mechanics, astrophysics and Riemannian geometry,
are expounded by Ghoussoub and Robert in \cite{Frederic2}.

The Hardy-type inequalities among others reflect amazing mathematical structures in connection with a variety of ``energies" controlled by ``entropy" associated with the Laplacian. 
For a bounded domain $\Omega\subset \mathbb R^N$ $(N\geq 3)$ with $0\in \Omega$, the classical Hardy inequality states that 
\begin{equation} \label{har}  \int_\Omega |\nabla u|^2\,dx\geq \frac{(N-2)^2}{4}\,\int_\Omega \frac{u^2}{|x|^2}\,dx
\quad \mbox{for all } u\in H_0^1(\Omega).\end{equation}

It is well-known that $\lambda_H:=(N-2)^2/4$ is the best constant
for the inequality in \eqref{har}. However, $\lambda_H$ is never attained in $H_0^1(\Omega)$ when $\Omega$ is bounded, in which case a remainder was shown to exist by Brezis and V\'azquez \cite{bva}. 
The improvements of 
this inequality on bounded domains, involving for example the first zero of the Bessel function, have been lately linked with Sturm's theory regarding the oscillatory behavior of certain linear ordinary differential equations.
The Hardy inequality and its various refinements \cites{adi,ft,gh} have found rich and diverse applications including the analysis of the stability of solutions of semilinear elliptic and parabolic equations \cite{bva,cm}, 
the asymptotic behavior of the heat equation with singular potentials  \cite{vz} and
the stability of eigenvalues in elliptic problems with perturbed Schr\"odinger operators \cite{adi2}. For new perspectives and applications of functional inequalities, we refer to Ghoussoub and Moradifam's book \cite{Gh}.

In this paper,  
we study elliptic equations involving the Hardy--Schr\"odinger operator such as 
\begin{equation}\label{eq1}
-\Delta u-\frac{\lambda}{|x|^2}u+|x|^{\theta}u^q=0\quad \mbox{in }\Omega\setminus \{0\},\quad u>0\quad \mbox{in }\Omega\setminus\{0\},%\quad (N\geq 3)
\end{equation}
where $\Omega\subseteq \mathbb R^N$ ($N\geq 3$) is either $\mathbb R^N$ or an open set $\Omega_0$ that contains zero, or an open set $\Omega_\infty$ that contains $\{x\in \mathbb R^N:\ |x|>R\}$ for $R>0$; we   
assume $q,\lambda,\theta\in \mathbb R$ and focus on the super-linear case $q>1$. In the sub-linear case $0<q<1$, the classification and limit behaviors of the non-negative solutions of \eqref{eq1} in $\mathbb R^N\setminus \{0\}$ are known from the work \cite{bidg} of 
Bidaut-V\'eron and Grillot. 

From now on, we assume $q>1$. 
For the long history associated with the study of problem \eqref{eq1}, we refer to \cite{Cmem,Guerch,LD2017} and the references cited therein. 
%We thus focus on the case $q>1$ in \eqref{eq1} with either 
When $\lambda=\theta=0$ and $q>1$, the study of the local and global solutions of \eqref{eq1} has been pioneered by Brezis--V\'eron \cite{brv} and V\'eron \cite{ver1,ver2}. Their results were generalized to $p$-Laplacian type equations with $1<p<N$ by 
Friedman and V\'eron \cite{frv}
for $p-1<q<N(p-1)/(N-p)$ and by V\'{a}zquez and V\'eron  \cite{vv} for $q\geq N(p-1)/(N-p)$. More recent generalizations exist in various directions, but without a Hardy potential   \cites{bra,cc,CC2015,CD2007,CD2010,song}.

As a major advance in this paper, we unveil 
the structure of the set of solutions of  \eqref{eq1} when $\Omega=\mathbb R^N$ for every $\theta,\lambda\in \mathbb R$ and $q>1$.
We give sharp existence results of all solutions of \eqref{eq1}, along with their precise behavior near the singular point $x=0$ and at infinity (see Theorems~\ref{uniq}, \ref{mult1} and \ref{non-e} or Corollary~\ref{ou1}). 
In addition, we provide the existence and profile near zero for 
all solutions of \eqref{eq1} when $\Omega$ is a bounded domain of $\mathbb R^N$ with $0\in \Omega$ and smooth boundary $\partial\Omega$ on which we impose a homogeneous (or non-homogeneous) Dirichlet boundary condition (see Theorem~\ref{est} or Corollaries~\ref{ouk3}--\ref{oa1}). Using the Kelvin transform, our results can be reformulated for problem \eqref{eq1} when $\Omega$ is an exterior domain. In Section~\ref{mr} we state our main results, which can be applied to equations where the Hardy--Schr\"odinger operator is replaced by more general operators, see Section~\ref{aplic}.  

The difficulties in deciphering the profiles near zero for the solutions of \eqref{eq1} with
$\Omega=\Omega_0$ arise 
%(respectively, $\Omega=\Omega_\infty$)
from and vary according to the position of $\lambda$ with respect to $\lambda_H=(N-2)^2/4$, the best constant in the Hardy inequality, and the position of $\theta$ relative to $-2$.  
When $\theta>-2$, the asymptotic behavior near zero of the solutions of \eqref{eq1} has recently been 
classified by C\^{\i}rstea \cite{Cmem} for $\lambda\leq \lambda_H$ (relying on the fundamental solutions of the Hardy--Schr\"odinger operator), and by Wei and Du \cite{LD2017} for $\lambda>\lambda_H$. The methods in the latter case use among other things an approximation of $\lambda_H$ by first eigenvalues of suitably modified eigenvalue problems and 
cannot be applied to the former case and vice versa. 

We stress that for $\theta\leq -2$, unlike $\theta>-2$, every solution of \eqref{eq1} with $\Omega=\Omega_0$ is bounded near zero for every $\lambda\in \mathbb R$ (see Lemma~\ref{lem01} or \cite[Proposition~2.7]{LD2017}). Since singular solutions were the main interest of \cite{Cmem} and \cite{LD2017}, the precise asymptotic behavior near zero was not pursued there for $\theta\leq -2$ in \eqref{eq1}. What this means (when using the Kelvin transform) is that the asymptotic behavior {\em at infinity} for the solutions of \eqref{eq1} with $\Omega=\Omega_\infty$ is still open for $\theta>-2$.      
We settle this issue in Theorem~\ref{mar2} as a result of Theorem~\ref{mar}. 

By means of a new and unified approach, in  Theorem~\ref{mar} we recover and extend to every $\theta\leq -2$ the results in \cite{LD2017} for $\lambda>\lambda_H$ and also to the relevant maximal range for $\lambda\leq \lambda_H$ (see Section~\ref{mr} for details).

Our findings differ according to four cases: $(\mathcal U)$, $(\mathcal M_1)$, $(\mathcal M_2)$ and $(\mathcal N)$. The first one corresponds to $\lambda>\lambda_H$ and every $\theta\in \mathbb R$, whereas the latter three situations pertain to 
$\lambda\leq \lambda_H$ and arise from the position of $\theta$ with respect to two critical exponents
denoted by $\theta_-$ and $\theta_+$, where  
\begin{equation} \label{tat} \theta_\pm:=p_\pm\,(q-1)-2 \quad \mbox{and}\quad  
p_\pm:=\frac{N-2}{2}\pm\sqrt{\lambda_H-\lambda}.
\end{equation} 

A first difference can be remarked at this stage compared with previous studies: when $\lambda>\lambda_H$ in our approach we can deal with every $\theta\in \mathbb R$ at once. On the other hand, for $\lambda\leq \lambda_H$, we emphasize that the position of $\theta$ is not analyzed with respect to $-2$ but rather with two critical exponents $\theta_\pm$ defined in \eqref{tat}.    
%We emphasize that to state and prove our main results, it 
%is more convenient to express the critical exponents $\theta_\pm$ with respect to $\theta$.
This is because we rely on the Kelvin transform, the effect of which when applied to a solution of \eqref{eq1} is to render an equation of the same type as \eqref{eq1} in which only $\theta$ changes, becoming $\widehat{\theta}:=\left(N-2\right) q- (N+2+\theta)$. 
Remark that $p_++p_-=N-2$ and $p_+p_-=\lambda$. Thus, $p_\pm$ are the roots of $\ell=0$ seen as a quadratic equation in $\Theta$, where for every $\theta\in \mathbb R$, 
we define
\begin{equation} \label{sigma} \Theta:=\frac{\theta+2}{q-1}\quad \mbox{and}\quad \ell(\theta)=\ell:=\Theta^2 -\left(N-2\right)\Theta+\lambda.\end{equation}

In this paper, we give the structure of the set of {\em all} solutions of \eqref{eq1} with $\Omega=\mathbb R^N$ as follows: 
\begin{enumerate}
	\item[(a)] A uniqueness result in Theorem~\ref{uniq} for Case $(\mathcal U)$, where 
	\begin{enumerate}
		\item[$(\mathcal U)$] $\lambda>\lambda_H$ and $\theta\in \mathbb R$;
	\end{enumerate}
	
	\item[(b)] A multiplicity result in Theorem~\ref{mult1} in relation to Cases $(\mathcal M_1)$ and $(\mathcal M_2)$:
	\begin{enumerate}
		\item[$(\mathcal M_1)$] $\lambda\leq \lambda_H$ and $\theta<\theta_-$;
		\item[$(\mathcal M_2)$] $\lambda\leq \lambda_H$ and
		$\theta>\theta_+$. %, where we define 
		(Here, we
		always have $\theta_+>-2$ since $q>1$.)
	\end{enumerate}
	\item[(c)] Non-existence of solutions of \eqref{eq1} in   Theorem~\ref{non-e} for Case $(\mathcal N)$, namely, 
	\begin{enumerate}
		\item[$(\mathcal N)$] $\lambda\leq \lambda_H$ and $\theta_-\leq \theta\leq \theta_+$. 
	\end{enumerate}
\end{enumerate} 

In Cases $(\mathcal U)$, $(\mathcal M_1)$ and $(\mathcal M_2)$, we see that $\ell>0$ and a 
radial solution of problem \eqref{eq1} is given by      
\begin{equation} \label{node}
U_0(x)=U_0(|x|):=\ell^{1/(q-1)} |x|^{-\Theta}\quad \mbox{for every  }x\in \mathbb R^N\setminus \{0\}.\end{equation}

The Kelvin transform in Section~\ref{mainr} reveals an intimate connection between Case $(\mathcal M_1)$ and Case~$(\mathcal M_2)$---the behavior near zero for a solution of \eqref{eq1} in one of these cases leads to knowledge of the behavior at infinity for another solution of \eqref{eq1} in the other case. In Case $(\mathcal M_2)$ the classification of the behavior near zero of the solutions of problem \eqref{eq1} given in \cite{Cmem} is closely linked with the fundamental solutions $\Phi_\lambda^\pm$ of 
the Hardy--Schr\"odinger operator $\mathbb L_\lambda:=\Delta+\lambda \,|x|^{-2}$. (We recall this classification in Theorem~\ref{th-cd}.)  
When $\lambda\leq \lambda_H$, let $\Phi_\lambda^\pm$ be the fundamental solutions of
the linear equation 
\begin{equation}\label{line} -\mathbb L_\lambda \Phi=0\quad \mbox{in } B_1(0)\setminus \{0\},\quad\mbox{where}\quad \mathbb L_\lambda:=\Delta+\lambda \,|x|^{-2}.\end{equation}
%Specifically,  
We set $\Phi_\lambda^-(x)=|x|^{-p_-}$ for each $x\in \mathbb R^N\setminus \{0\}$ and we define 
$\Phi_\lambda^+$ as follows
\begin{equation}\label{fundam}
\Phi_\lambda^+(x)=\left\{\begin{aligned}
& |x|^{-p_+}\ \mbox{ for every } x\in \mathbb R^N\setminus \{0\}&&\mbox{ if }
\lambda<\lambda_H,&\\
&|x|^{-\frac{N-2}{2}}\log \left(1/|x|\right)\ \mbox{ for every } 0<|x|<1&& 
\mbox{ if }
\lambda=\lambda_H.&
\end{aligned}
\right.
\end{equation} 
Then, $\Phi_\lambda^-$ satisfies \eqref{line} in $\mathcal D'(\mathbb R^N)$ and $\lambda |\cdot|^{-2}\Phi_\lambda^-(\cdot)$ is locally integrable in $\mathbb R^N$. (The same applies to $\Phi^{+}_\lambda$ if  $0<\lambda<\lambda_H$.)

\vspace{0.2cm}
For $\Omega=\mathbb R^N$, in Case $(\mathcal M_1)$, like in Case $(\mathcal M_2)$, we prove in Theorem~\ref{mult1} that \eqref{eq1} has infinitely many solutions, all radially symmetric satisfying 
$$ 
\lim_{|x|\to 0} \frac{u(x)}{U_0(x)}=1\ \ \mbox{in Case }(\mathcal M_1)\ \ \mbox{and}\ \ 
\lim_{|x|\to \infty} \frac{u(x)}{U_0(x)}=1 
\  \mbox{in Case }(\mathcal M_2).
$$

In addition, we find the following: 

\vspace{0.2cm}
$\bullet$ For every $\theta$ 
in Case $(\mathcal M_2)$, 
%and each $\gamma\in (0,\infty)$, 
there exists a unique solution $u_{1,\theta}$ of \eqref{eq1} with $\Omega=\mathbb R^N$, subject to 
$ \lim_{|x|\to 0} u(x)/\Phi_\lambda^+(x)=1 
$, where $\Phi_\lambda^+$ is given by \eqref{fundam}.   
Then, $U_0\cup\{
\mu^\Theta u_{1,\theta}(\mu \cdot):
\ \mu\in (0,\infty)\}$ give all  
solutions of \eqref{eq1} in $\mathbb R^N\setminus \{0\}$. %for $\theta$ in Case $(\mathcal M_2)$. 

$\bullet$ For each $\theta$ in Case $(\mathcal M_1)$, the set of all solutions of \eqref{eq1} with $\Omega=\mathbb R^N$ is 
$U_0\cup \{ \mu^\Theta \,U_{1,\theta}(\mu\cdot):\ \mu\in (0,\infty)\}$, where $U_{1,\theta}$ is the Kelvin transform of $u_{1,\widehat{\theta}}$ with   $\widehat{\theta}:=(N-2)\,q-(N+2+\theta)$, namely,
$ U_{1,\theta}(x):=|x|^{2-N} \,u_{1,\widehat{\theta}}(x/|x|^2)$ 
for $x\in \mathbb R^N\setminus \{0\}.$
If $\theta$ is in Case $(\mathcal M_1)$, then $\widehat{\theta}$ is in Case $(\mathcal M_2)$ and $\ell(\theta)=\ell(\widehat{\theta})$. 
%Moreover, $\ell(\theta)=\ell(\widehat{\theta})$ and $2-N+\Theta=-\widehat{\Theta}$, where $\widehat{\Theta}=(\widehat{\theta}+2)/(q-1)$. 

\vspace{0.2cm}
We bring to light 
several interesting features for the solutions of \eqref{eq1} when considered globally for  $\Omega=\mathbb R^N$ rather than locally, say for $\Omega=B_1(0)$. A significant difference between local and global solutions is that, whenever it exists, a solution of \eqref{eq1} in $\mathbb R^N\setminus \{0\}$ becomes radially symmetric (see Theorems~\ref{uniq} and \ref{mult1}). Another difference is that 
a host of solutions of \eqref{eq1} that exist in $\Omega\setminus \{0\}$, where $\Omega$ is a smooth bounded domain containing zero, cannot be extended as solutions of \eqref{eq1} in $\mathbb R^N\setminus \{0\}$. 
This becomes apparent by comparing 
%our results in $\mathbb R^N$ (that is, 
Theorems~\ref{uniq}, \ref{mult1} and \ref{non-e} with Theorem~\ref{est} (see also Remark~\ref{mis}). 

\begin{definition}
	By a {\em solution} ({\em sub-solution}, {\em super-solution}) of \eqref{eq1}, we mean a positive function $u\in C^1(\Omega\setminus \{0\})$ such that  
	for all functions (non-negative functions) $\varphi\in C^1_c(\Omega\setminus \{0\})$, we have
	\begin{equation} 
	\label{dir}
	\int_\Omega \nabla u\cdot \nabla \varphi\,dx-\int_\Omega \frac{\lambda}{|x|^2}u\,\varphi\,dx+\int_\Omega
	|x|^{\theta}u^q\,\varphi\,dx=0\quad (\leq 0,\ \geq 0).
	\end{equation} 
\end{definition}
We denote by $C^1_c(\Omega\setminus \{0\})$ the set of functions in $C^1(\Omega\setminus \{0\})$ with compact support in $\Omega\setminus \{0\}$.

\subsection{Main results}\label{mr}
%Using Theorem~\ref{mar} and Theorem~\ref{mar2}, 
%the main advance of this paper is 

\vspace{0.2cm}
We show that Case $(\mathcal U)$ resembles Case $(\mathcal M_1)$
(respectively, Case~$(\mathcal M_2)$) only when it comes to the asymptotic behavior near zero (respectively, at infinity) for every solution of \eqref{eq1} with $\Omega=\Omega_0$ (respectively, $\Omega=\Omega_\infty$). In other respects, Case $(\mathcal U)$ is different from the rest. Our first result shows the reason.

\begin{theorem}[Uniqueness] \label{uniq} 
	In Case $(\mathcal U)$,     
	$U_0$ in \eqref{node} is the unique solution of \eqref{eq1} for $\Omega=\mathbb R^N$.
\end{theorem}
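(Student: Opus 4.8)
The plan is to prove that any solution $u$ of \eqref{eq1} with $\Omega=\mathbb{R}^N$ coincides with the explicit self-similar solution $U_0$ of \eqref{node} by a comparison (sliding) argument built on the scaling structure of the equation. The starting observation is that \eqref{eq1} is invariant under the rescaling $u\mapsto \mu^{\Theta}u(\mu\,\cdot)$ for every $\mu>0$, of which $U_0$ is the unique fixed point, and that $U_0$ satisfies $-\Delta U_0-\lambda|x|^{-2}U_0=-|x|^{\theta}U_0^q$. Consequently, for any constant $c>0$,
\begin{equation*}
-\Delta(cU_0)-\lambda|x|^{-2}(cU_0)+|x|^{\theta}(cU_0)^q=(c^q-c)\,|x|^{\theta}U_0^q ,
\end{equation*}
so that $cU_0$ is a strict supersolution when $c>1$ and a strict subsolution when $c<1$. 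These ordered one-parameter families are what I would slide against $u$, using that by interior elliptic regularity $u\in C^2(\mathbb{R}^N\setminus\{0\})$.

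The second, decisive, ingredient is the two-sided matched asymptotics $\lim_{|x|\to 0}u(x)/U_0(x)=1$ and $\lim_{|x|\to\infty}u(x)/U_0(x)=1$, valid for every solution in Case $(\mathcal U)$. Near the origin this is the assertion that Case $(\mathcal U)$ behaves like Case $(\mathcal M_1)$, and at infinity that it behaves like Case $(\mathcal M_2)$; both follow from the asymptotic classification established for $\Omega=\Omega_0$ and $\Omega=\Omega_\infty$, applied to $u$ viewed as a solution near $0$ and near $\infty$. The hypothesis $\lambda>\lambda_H$ is what makes these limits equal to $1$: since then $p_\pm$ are complex, the fundamental solutions $\Phi^\pm_\lambda$ of $\mathbb{L}_\lambda$ are oscillatory, so no positive solution can carry a genuinely singular, fundamental-solution-type profile at either end, leaving the profile of $U_0$ as the only admissible leading behavior.

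With these two facts the uniqueness is clean. Set $m:=\sup_{\mathbb{R}^N\setminus\{0\}}u/U_0$; since the ratio tends to $1$ at $0$ and at $\infty$, if $m>1$ the supremum is attained at some interior point $x_0$ with $0<|x_0|<\infty$. Then $w:=mU_0-u\ge 0$ attains its global minimum value $0$ at $x_0$, where $u(x_0)=mU_0(x_0)$. Subtracting the equations and using the supersolution identity gives
\begin{equation*}
-\Delta w-\lambda|x|^{-2}w+|x|^{\theta}\big((mU_0)^q-u^q\big)=(m^q-m)\,|x|^{\theta}U_0^q ,
\end{equation*}
and evaluating at $x_0$, where both $w$ and $(mU_0)^q-u^q$ vanish, yields $-\Delta w(x_0)=(m^q-m)|x_0|^{\theta}U_0(x_0)^q>0$, i.e.\ $\Delta w(x_0)<0$. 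This contradicts $\Delta w(x_0)\ge 0$ at an interior minimum, so $m\le 1$ and $u\le U_0$. The symmetric argument with $m':=\inf u/U_0$ and the subsolutions $cU_0$, $c<1$, gives $u\ge U_0$; hence $u\equiv U_0$, and in particular every solution is radial.

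The genuinely hard part is not this comparison step but establishing the matched asymptotics $u/U_0\to1$ at both ends, and this is exactly where $\lambda>\lambda_H$ is indispensable. In the Emden--Fowler variables $u(x)=|x|^{-\Theta}v(\ln|x|,x/|x|)$, equation \eqref{eq1} becomes the autonomous problem $v_{tt}+(N-2-2\Theta)v_t+\Delta_{\mathbb{S}^{N-1}}v+\ell v-v^q=0$ on the cylinder $\mathbb{R}\times\mathbb{S}^{N-1}$, with $v>0$. Linearizing about the trivial state $v=0$, where the power term is negligible, gives the characteristic equation $\mu^2+(N-2-2\Theta)\mu+\ell=0$ whose discriminant equals $4(\lambda_H-\lambda)$; for $\lambda>\lambda_H$ this is negative, the roots are complex, and solutions near $v=0$ must oscillate. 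A positive solution can therefore neither decay to $0$ nor escape through the linear regime at either end, so it stays confined to a compact set bounded away from $0$; a dynamical-systems/energy analysis (exploiting the identity $\frac{d}{dt}E(t)=-(N-2-2\Theta)\int_{\mathbb{S}^{N-1}}v_t^2\,d\sigma$) then pins its limits at $t\to\pm\infty$ to the constant state $v\equiv\ell^{1/(q-1)}$, which is precisely $u/U_0\to1$. This oscillation obstruction is the crux; once it is secured, the comparison argument closes the proof uniformly in all sub-cases, including the self-dual one $\Theta=(N-2)/2$.
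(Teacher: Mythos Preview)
Your proof is correct and follows essentially the same line as the paper: both invoke Theorem~\ref{mar} (and its Kelvin-transform consequence, Theorem~\ref{mar2}) to get $u/U_0\to 1$ at $0$ and at $\infty$, then conclude by comparison with the family $\{cU_0\}$. The only cosmetic difference is in the comparison step: the paper sandwiches $u$ between $(1\pm\varepsilon)U_0$ on an annulus and applies Lemma~\ref{compa}, then lets $\varepsilon\to 0$, whereas you argue directly at the touching point where $\sup u/U_0$ is attained. These are interchangeable formulations of the same maximum-principle idea.

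Your final paragraph sketches a genuinely different route to the asymptotics themselves, via the Emden--Fowler substitution and the oscillation of the linearization at $v=0$ when $\lambda>\lambda_H$. This gives a clean structural reason why $\lambda_H$ is the threshold, and is an alternative to the paper's explicit barrier constructions (Lemmas~\ref{con1}, \ref{sun-s}, Proposition~\ref{lim-o}). Be aware, though, that as written it is only a heuristic: the energy monotonicity $\frac{d}{dt}E=-(N-2-2\Theta)\int_{\mathbb S^{N-1}}v_t^2$ degenerates at $\Theta=(N-2)/2$, and even away from that value one must still rule out non-constant bounded orbits and treat the full PDE on the cylinder rather than a radial ODE. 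The paper's sub/super-solution method avoids these case distinctions entirely, which is its main practical advantage.
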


To our best knowledge, this result is completely new. The crux of the proof is to show that in Case $(\mathcal U)$, $U_0$ %in \eqref{node} 
models  
the behavior near zero for every solution of \eqref{eq1}, which also happens in Case $(\mathcal M_1)$. 

\begin{theorem}[Classification of behavior near zero, Cases ${(\mathcal U})$ and  $(\mathcal M_1)$] \label{mar} 
	In Case $({\mathcal U})$ and 
	Case~$(\mathcal M_1)$, every solution of \eqref{eq1} with $\Omega=\Omega_0$ exhibits near zero the limit behavior 
	\begin{equation} \label{kop}	\lim_{|x|\to 0}\frac{u(x)}{U_0(x)}=1.\end{equation}   
\end{theorem}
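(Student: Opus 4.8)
The plan is to remove the singular weights by the Emden--Fowler (cylindrical) change of variables, turning \eqref{eq1} near zero into an autonomous semilinear equation on a semi-infinite cylinder whose only admissible equilibrium corresponds to $U_0$. Writing $u(x)=|x|^{-\Theta}v(t,\omega)$ with $t=-\log|x|$, $\omega=x/|x|\in S^{N-1}$ and $\Theta,\ell$ as in \eqref{sigma}, a direct computation (using $\theta-\Theta q=-\Theta-2$) transforms \eqref{eq1} into
\begin{equation}
v_{tt}+\alpha\,v_{t}+\Delta_{\omega}v+\ell\,v=v^{q}\qquad\text{in }(T,\infty)\times S^{N-1},\quad v>0,
\end{equation}
where $\alpha:=2\Theta-(N-2)$, $\Delta_\omega$ is the Laplace--Beltrami operator on $S^{N-1}$, and $T=-\log\rho$ with $B_\rho(0)\subset\Omega_0$. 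Since $\ell>0$ in both cases, the constant $v_\ast:=\ell^{1/(q-1)}$ is an equilibrium and corresponds exactly to $U_0$ in \eqref{node}. Thus proving \eqref{kop} is equivalent to showing $v(t,\cdot)\to v_\ast$ uniformly as $t\to\infty$. Note that $\alpha^2-4\ell=(N-2)^2-4\lambda=4(\lambda_H-\lambda)$, which is negative in Case $(\mathcal U)$ and nonnegative in Case $(\mathcal M_1)$; moreover in Case $(\mathcal M_1)$ we have $\Theta<p_-\le (N-2)/2$, so $\alpha<0$.

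First I would establish two-sided a priori bounds $0<c\le v\le C$ for $t$ large. The upper bound is a local Keller--Osserman--type estimate: on the cylinder the absorption term $v^{q}$ ($q>1$) dominates the lower-order terms $\ell v$, $\alpha v_t$, $\Delta_\omega v$, so a standard comparison on unit sub-cylinders forces $v\le C$, irrespective of the sign of $\lambda$. For the lower bound the cases diverge. In Case $(\mathcal M_1)$ the operator $-\mathbb{L}_\lambda$ is nonnegative by \eqref{har}, so the maximum principle is available and one compares $u$ with the explicit subsolutions $A|x|^{-\Theta}$, $0<A<v_\ast$, for which $-\Delta w-\lambda|x|^{-2}w+|x|^{\theta}w^{q}=|x|^{-\Theta-2}A\,(A^{q-1}-\ell)<0$. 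In Case $(\mathcal U)$ no such principle exists; instead I would use that the linearization of the cylinder equation at $v\equiv 0$ is a focus in the $t$-variable (because $\alpha^2-4\ell<0$). Consequently the spherical average $\bar v(t)=|S^{N-1}|^{-1}\int_{S^{N-1}}v\,d\omega$, which satisfies $\bar v''+\alpha\bar v'+\ell\bar v=|S^{N-1}|^{-1}\int_{S^{N-1}}v^{q}\,d\omega\ge 0$, cannot decay to $0$ while staying positive, since a Sturmian oscillation argument would force a sign change; this yields a uniform positive lower bound.

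With $c\le v\le C$ secured, interior elliptic regularity on unit sub-cylinders gives uniform $C^{2,\gamma}$ bounds, and I would exploit the Lyapunov functional
\begin{equation}
\mathcal{E}(t)=\int_{S^{N-1}}\Big(\tfrac12 v_t^2-\tfrac12|\nabla_\omega v|^2+\tfrac{\ell}{2}v^2-\tfrac{1}{q+1}v^{q+1}\Big)\,d\omega,\qquad \mathcal{E}'(t)=-\alpha\int_{S^{N-1}}v_t^2\,d\omega.
\end{equation}
When $\alpha\neq 0$, $\mathcal{E}$ is monotone and bounded, whence $\int_T^\infty\!\!\int_{S^{N-1}}v_t^2\,d\omega\,dt<\infty$; extracting $t_n\to\infty$ and invoking the regularity bounds, $v(t_n+\cdot,\cdot)$ converges in $C^2$ to a $t$-independent positive solution $\phi$ of $\Delta_\omega\phi+\ell\phi=\phi^{q}$ on $S^{N-1}$. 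The rigidity is then immediate on the compact manifold $S^{N-1}$: at a maximum point $\Delta_\omega\phi\le 0$ gives $\phi_{\max}^{q-1}\le\ell$, while at a minimum $\Delta_\omega\phi\ge 0$ gives $\phi_{\min}^{q-1}\ge\ell$, so $\phi\equiv v_\ast$. Since every subsequential limit equals $v_\ast$ and the orbit is precompact, $v(t,\cdot)\to v_\ast$, i.e.\ \eqref{kop} holds.

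The main obstacle is precisely Case $(\mathcal U)$, where the maximum principle is unavailable. There the oscillation argument that prevents $v\to 0$ must be made quantitative enough to survive the nonlinear forcing and to deliver a \emph{uniform} positive lower bound, and the borderline sub-case $\alpha=0$ (that is, $\Theta=(N-2)/2$), in which $\mathcal{E}$ is merely conserved and the cylinder equation is genuinely elliptic with no dissipative $t$-direction, requires a separate argument---either a second monotone quantity or a Liouville/Pohozaev estimate on the half-cylinder---to rule out bounded nonconvergent trajectories. It is exactly this energy-plus-rigidity mechanism that pins the constant in \eqref{kop} to $1$, going beyond the two-sided bounds a plain comparison would give.
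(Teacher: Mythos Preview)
Your approach via the Emden--Fowler change of variables and a Lyapunov/energy argument on the cylinder is a genuine alternative to the paper's method, but the proposal has real gaps, most of which you yourself flag, and one misconception that is worth correcting.

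The substantive gap is the uniform positive lower bound $v\ge c>0$; this is where the entire difficulty of the theorem lies, and neither of your two arguments for it is complete. In Case $(\mathcal M_1)$ your comparison with $A|x|^{-\Theta}$ does not close: to compare on a punctured ball you must control the competitor as $|x|\to 0$, and you have no such control a priori; invoking ``the maximum principle is available by \eqref{har}'' is not enough, since turning Hardy's $H_0^1$-nonnegativity into a pointwise inequality for possibly singular functions at $0$ requires a cutoff-and-limit argument you do not supply. In Case $(\mathcal U)$ the Sturmian sketch is not correct as stated: from $\bar v''+\alpha\bar v'+\ell\bar v=\overline{v^{\,q}}\ge 0$ you obtain that $\psi=e^{\alpha t/2}\bar v$ is a positive \emph{supersolution} of $\psi''+\beta^2\psi=0$, but positive supersolutions of an oscillatory equation need not oscillate (e.g.\ $\psi\equiv 1$), so no sign change is forced. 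To rule out $\bar v\to 0$ one would need an inequality in the opposite direction (e.g.\ $\overline{v^{\,q}}\le C^{q-1}\bar v$ together with $\ell-C^{q-1}>0$), but the Keller--Osserman constant $C$ is in general larger than $\ell^{1/(q-1)}$, so this fails. You also leave the borderline $\alpha=0$, i.e.\ $\Theta=(N-2)/2$, unresolved.

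The paper bypasses all three issues at once by a device you may find instructive. First, the nonlinear comparison principle of Lemma~\ref{compa} (based on $t\mapsto t^{q-1}$ increasing, not on a linear maximum principle) holds on any annulus $\{\delta<|x|<r_0\}$ for \emph{every} $\lambda\in\mathbb R$, including $\lambda>\lambda_H$; so your dichotomy ``maximum principle available in $(\mathcal M_1)$ / unavailable in $(\mathcal U)$'' is not the operative one here. Second, the paper constructs explicit subsolutions $w_\delta(x)=c\,U_0(x)\big[1-(\delta/|x|)^\alpha\big]^{1/\sqrt{\alpha}}$ (Lemma~\ref{con1}) that \emph{vanish} on $\partial B_\delta(0)$, so the comparison with $u$ on the annulus is automatic at the inner boundary; letting $\delta\to 0$ gives $\liminf_{|x|\to 0}u/U_0>0$ in both Cases $(\mathcal U)$ and $(\mathcal M_1)$ simultaneously (Corollary~\ref{fum}). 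A second family of refined sub/super-solutions $w_{\varepsilon,\eta}^\pm$ (Lemma~\ref{sun-s}, Proposition~\ref{lim-o}) then upgrades the two-sided bound to the exact limit $1$, uniformly in the sign of your $\alpha$ and without any energy argument. Your cylindrical route could likely be completed, but the lower-bound step is the crux and is currently missing.
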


The advance made in Theorem~\ref{mar} is to  
remove the restriction $\theta>-2$ imposed in \cite{Cmem} for Case~$(\mathcal M_1)$  and in \cite{LD2017} for Case $(\mathcal U)$. 
When $\theta\leq -2$ in 
Case~$(\mathcal U)$ a precise asymptotic behavior near zero remained open: it was shown only 
that every solution of \eqref{eq1} is bounded near zero \cite[Proposition 2.7]{LD2017} (see Lemma~\ref{lem01} for another proof). The method in \cite{LD2017} is different than in  \cite{Cmem} and neither treatment could be adapted to cover the full range of Theorem~\ref{mar}. Indeed, the techniques in \cite{Cmem} for Case~$(\mathcal M_1)$ rely on the fundamental solutions of the Hardy--Schrodinger operator $\mathbb L_\lambda$ in \eqref{line} and thus cannot be extended to tackle Case ~$(\mathcal U)$ in Theorem~\ref{mar}. On the other hand, the results in \cite{LD2017} depend on $\theta>-2$ and $\lambda>\lambda_H$ to get an approximation (performed in \cite{wei-f}) of the Hardy constant $\lambda_H$ by first eigenvalues of suitably modified eigenvalue problems of $-\Delta \phi=\lambda\,|x|^{-2}\phi$ in $H_0^1(\Omega)$, see  \cite[Lemma~2.3]{LD2017}; the analysis is also based on \cite[Proposition~2.5]{LD2017} giving the existence and uniqueness of the solution to \eqref{eq1}, where $\Omega$ is a smooth bounded domain containing zero, subject to a zero Dirichlet condition on $\partial\Omega$. But such a solution fails to exist in Case $(\mathcal M_1)$ as we shall prove in Theorem~\ref{est}. %In particular, in Theorem~\ref{}
%which extends the uniqueness result of \cite[Proposition~2.5]{LD2017} to the case $\theta\leq -2$.     

We provide a new and unified proof of Theorem~\ref{mar} that in our opinion is simpler than in \cite{Cmem} and \cite{LD2017}. We describe here the novelty of our approach. Unlike the method in \cite{Cmem}, we do not use the fundamental solutions of the operator $\mathbb L_\lambda$ in \eqref{line}. 
Instead, we construct explicit local sub-solutions and super-solutions of \eqref{eq1} with the advantage of
unifying the treatment of Cases~$(\mathcal M_1)$ and $(\mathcal U)$. We also reason differently than in  %contrast to the main result in
\cite{LD2017} for $\theta>-2$ in Case~$(\mathcal U)$
since our proof of \eqref{kop} does not rely on the existence and uniqueness of the solution to 
\eqref{boun} with $h=0$.
In fact, we use the opposite strategy. First, without any concern for the existence issue, we prove that any solution of \eqref{eq1} satisfies \eqref{kop} in Cases $(\mathcal U)$ and $(\mathcal M_1)$. Then, with this precise behavior near zero, we infer in Theorem~\ref{est} that \eqref{boun} has at most one solution via the comparison principle in Lemma~\ref{compa}, whereas we obtain a solution as a limit of solutions to approximate boundary value problems. 
(For the existence of a solution in Case $(\mathcal M_1)$, the condition $h\not\equiv 0$ in \eqref{boun} is necessary.) %designed with \eqref{kop} in mind. 

% will be described in Section~\ref{novelty}. 
What sets apart Case $(\mathcal U)$ and Case $(\mathcal M_1)$ from the remaining cases is that every solution $u$ of \eqref{eq1} with $\Omega=\Omega_0$ satisfies
\begin{equation} \label{tav} \liminf_{|x|\to 0} \frac{u(x)}{U_0(x)}>0.\end{equation} 
We prove this fact in Lemma~\ref{con1} by 
devising an explicit family   $\{w_\delta\}_{\delta>0}$ of ``rough" sub-solutions of \eqref{eq1} in
$\mathbb R^N\setminus \overline{B_\delta(0)}$,  satisfying $w_\delta=0$ on $\partial B_\delta(0)$ and $\lim_{\delta\to 0^+} w_\delta(x)=c\,U_0(x)$ for every $x\in \mathbb R^N\setminus \{0\}$, where $c>0$ is any suitably small constant. More precisely, we define $w_\delta$ as follows 
\begin{equation} \label{mic} w_\delta(x):=c\,U_0(x)\,\left[1-\left(\frac{\delta}{|x|}\right)^\alpha\right]^{\frac{1}{\sqrt{\alpha}}}\quad \mbox{ for every } |x|\geq \delta,\end{equation} where we fix $\alpha>0$ small,  depending only on $N,q,\theta$ and $\lambda$. It turns out that $w_\delta$ satisfies the above properties for every $c\in (0,c_\alpha)$, where $c_\alpha>0$ depends on $\alpha$, but not on $\delta$. With a suitable choice of the constant $c=c(r_0,u)$ such that $u\geq w_\delta$ on $\partial B_{r_0}(0)$, where $r_0>0$ is such that $\overline{B_{r_0}(0)}\subset \Omega$, the comparison principle in Lemma~\ref{compa} implies that $u\geq w_\delta$ for every $\delta\leq |x|\leq r_0$. By letting $\delta\to 0^+$, we obtain \eqref{tav}. We use the term ``rough" in relation to the sub-solution $w_\delta$ to indicate that at this stage we get $\liminf_{|x|\to 0} u(x)/U_0(x)\geq c$ for a constant $c>0$ that is {\em not} optimal. 
%However, for our approach such an estimate will be enough.  

To complete the proof of Theorem~\ref{mar} it remains to show that \eqref{tav} yields \eqref{kop}. 
This is achieved in Proposition~\ref{lim-o} by a unified construction of refined local sub/super-solutions of \eqref{eq1} in $B_1(0)\setminus \{0\}$. These   
are explicitly given in \eqref{subso}, working in Cases $(\mathcal U)$, $(\mathcal M_1)$ and $(\mathcal M_2)$ (see Lemma~\ref{sun-s} for details). For every $\varepsilon\in (0,1)$ and $\eta>0$, we define $w_{\varepsilon,\eta}^\pm$ in $ \mathbb R^N \setminus \{0\}$ as follows 
\begin{equation} \label{subso} 
\begin{aligned}
& 
w_{\varepsilon,\eta}^-(x):= \left(1- \varepsilon\right) 
U_0(x)\,|x|^{\eta} \left( 1+\frac{|x|^\alpha}{\nu}\right)^{ -\frac{1}{\sqrt{\alpha}}},\\
& w_{\varepsilon,\eta}^+(x):= \left(1+ \varepsilon\right) 
U_0(x)\,|x|^{-\eta} \left( 1+\frac{|x|^\alpha}{\nu}\right)^{ \frac{1}{\sqrt{\alpha}}},
\end{aligned} 
\end{equation}
%for every $x\in \mathbb R^N \setminus \{0\}$, 
where  
$\alpha>0$ is suitably fixed, depending only on $N,q,\theta$ and $\lambda$, whereas $\nu>0$ is arbitrary. 
Such a construction, which we motivate in Section~\ref{motiv},  appears here for the first time and is robust enough to
deal with the Hardy potential in the 
nonlinear elliptic equation \eqref{eq1}. 

Theorem~\ref{uniq} follows readily from Theorem~\ref{mar}. Indeed, by proving in Case~$(\mathcal U)$ that there is only one asymptotic behavior near zero as in \eqref{kop}, using the Kelvin transform, we gain a unique behavior at infinity for {\em every} solution of \eqref{eq1} with $\Omega=\Omega_\infty$, namely, 
\begin{equation} \label{kop2} \lim_{|x|\to \infty} \frac{u(x)}{U_0(x)}=1. 
\end{equation} 
Then, using  Lemma~\ref{compa}, we derive that $U_0$ is in Case $(\mathcal U)$ the only solution of \eqref{eq1} in $\mathbb R^N\setminus \{0\}$.  
%(see Section~\ref{sec-uni} for details).  
In contrast, we prove in Theorem~\ref{mult1} that 
\eqref{eq1} in $\mathbb R^N\setminus \{0\}$ has infinitely many solutions in  
Case~$(\mathcal M_1)$ and Case $(\mathcal M_2)$, whereas no solutions exist in Case $(\mathcal N)$ (see Theorem~\ref{non-e}). 

\vspace{0.2cm}
In Theorem~\ref{est} we extend some results from \cite{Cmem} and \cite{LD2017} and find new ones about the solutions of \eqref{eq1}, subject to a Dirichlet boundary condition on $\partial\Omega$. 
For every $q>1$ and $\lambda,\theta\in \mathbb R$, 
we address the existence, uniqueness or multiplicity of solutions to the nonlinear elliptic problem
\begin{equation} \label{boun} 
\left\{\begin{aligned}
& -\Delta u-\frac{\lambda}{|x|^2}u+|x|^{\theta}u^q=0\ \  \mbox{in }\Omega\setminus \{0\},&\\
& u=h\geq 0\ \  \mbox{on } \partial\Omega,\quad \ u>0 \ \ \mbox{in } \Omega\setminus \{0\},&
%& u>0 && \mbox{in } \Omega,& 
\end{aligned}\right.
\end{equation} 
where $\Omega$ is a smooth bounded domain containing the origin of $\mathbb R^N$ $(N\geq 3)$ and $h\in C(\partial\Omega)$ is a non-negative function. By a {\em solution} of \eqref{boun}, we mean a function
$u_h\in C^1(\Omega\setminus\{0\})\cap C(\overline{\Omega}\setminus \{0\})$ that is positive in $\Omega\setminus \{0\}$ such that $u_h|_{\partial\Omega}=h$ and $u_h$ satisfies \eqref{eq1} in
$\mathcal D'(\Omega\setminus\{0\})$. 

\begin{theorem}[Existence, uniqueness/multiplicity results for \eqref{boun}] \label{est} Let $\Omega\subseteq \mathbb R^N$ be a smooth bounded domain containing zero. Let $h\in C(\partial\Omega)$ be any non-negative function. 
	\begin{enumerate}
		\item[{\rm (1)}] Let Case $(\mathcal U)$ hold. Then, problem \eqref{boun} has a unique solution $u_h$. Moreover, if 
		$\Theta<(N-2)/2$ and $h\equiv 0$, then 
		$u_h(x)/|x|$ and $|x|^{\theta+1} u_h^q$ belong to $ L^2(\Omega)$, $u_h\in H^1_0(\Omega)$ and, for every $\varphi\in H_0^1(\Omega)$,   
		\begin{equation} 
		\label{ditt}
		\int_\Omega \nabla u_h\cdot \nabla \varphi\,dx-\int_\Omega \frac{\lambda}{|x|^2}u_h\,\varphi\,dx+\int_\Omega
		|x|^{\theta}u_h^q\,\varphi\,dx=0.
		\end{equation}  
		
		\item[{\rm (2)}] 
		Assume Case $(\mathcal M_1)$ or Case $(\mathcal N)$. If $h\not\equiv 0$ on $\partial \Omega$, then problem \eqref{boun} has a unique solution $u_h$. %Moreover, $u_h$ can be extended as a solution of \eqref{eq1} in $\Omega$. 

		\item[{\rm (3)}] If $h\equiv 0 $ on $\partial\Omega$, then \eqref{boun} has no solutions in Case $(\mathcal M_1)$ and Case $(\mathcal N)$.   
		
		\item[{\rm (4)}] Assume Case $(\mathcal M_2)$. Then, \eqref{boun} has infinitely many solutions: for every $\gamma\in (0,\infty]$ (also for $\gamma=0$ when $h\not\equiv 0$ on $\partial\Omega$), problem \eqref{boun}, subject to 
		\begin{equation} \label{gio} \lim_{|x|\to 0} \frac{u(x)}{\Phi_\lambda^+(x)}=\gamma 
		\end{equation} has a unique solution $u_h^{(\gamma)}$. Moreover,  for $\gamma=\infty$, the solution $u_h^{(\gamma)}$ satisfies 
		$$ \lim_{|x|\to 0}\frac{u_h^{(\gamma)}(x)}{U_0(x)}= 1.$$
		\begin{enumerate} 
			\item[{\rm (a)}] If $h\not\equiv 0$ on $\partial\Omega$, then $\{u_h^{(\gamma)}:\ 0\leq \gamma\leq \infty\}$ is the set of all solutions of problem \eqref{boun} and for $\gamma=0$ we have $\lim_{|x|\to 0} |x|^{p_-}u_h^{(\gamma)}(x)\in (0,\infty)$.
			\item[{\rm (b)}]  
			If $h= 0$ on $\partial\Omega$, then 
			all solutions of \eqref{boun} are
			$\{u_h^{(\gamma)}:\ 0<\gamma\leq \infty\}$. 
		\end{enumerate}
	\end{enumerate}
\end{theorem}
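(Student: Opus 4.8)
The plan is to ground every uniqueness statement on matched asymptotics near zero plus the comparison principle of Lemma~\ref{compa}, and every existence statement on a monotone approximation by the regular problems posed on the punctured domains $\Omega_n:=\{x\in\Omega:\ |x|>1/n\}$, where the Hardy potential $\lambda|x|^{-2}$ is bounded so that standard theory applies. For uniqueness in Cases $(\mathcal U)$ and $(\mathcal M_1)$ I would invoke Theorem~\ref{mar}: every solution $u$ of \eqref{boun} satisfies $u(x)/U_0(x)\to 1$ as $|x|\to 0$. Hence, given two solutions $u_1,u_2$ sharing the datum $h$, for each $\varepsilon\in(0,1)$ there is $r>0$ with $u_1\le(1+\varepsilon)u_2$ on $\partial B_r(0)$; since $u_1=h\le(1+\varepsilon)u_2$ on $\partial\Omega$ and $(1+\varepsilon)u_2$ is a super-solution (because $(1+\varepsilon)^q>1+\varepsilon$ for $q>1$), Lemma~\ref{compa} on $\Omega\setminus\overline{B_r(0)}$ gives $u_1\le(1+\varepsilon)u_2$ there; letting $r\to 0^+$, then $\varepsilon\to 0^+$, and swapping $u_1,u_2$, forces $u_1=u_2$. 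Case $(\mathcal N)$ is handled the same way once the a priori near-zero control is in place (boundedness via Lemma~\ref{lem01} when $\theta\le-2$, and the $\Phi_\lambda^-$-type bound otherwise). In Case $(\mathcal M_2)$ the identical scheme yields the uniqueness of $u_h^{(\gamma)}$ for each fixed $\gamma$, since prescribing \eqref{gio} (for $\gamma\in(0,\infty]$) or $\lim_{|x|\to0}|x|^{p_-}u$ finite (for $\gamma=0$) fixes the leading behavior at $0$, while $h$ fixes it on $\partial\Omega$.

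For existence I would, for each $n$, solve the regular problem on $\Omega_n$ with outer datum $h$ on $\partial\Omega$ and an inner datum on $\partial B_{1/n}(0)$ chosen to \emph{drive} the target singularity: a divergent inner datum (e.g.\ $U_0|_{\partial B_{1/n}(0)}$) to reach $u/U_0\to1$ in Cases $(\mathcal U)$, $(\mathcal M_1)$ and the $\gamma=\infty$ solution of $(\mathcal M_2)$, and $\gamma\Phi_\lambda^+|_{\partial B_{1/n}(0)}$ (respectively $\Phi_\lambda^-|_{\partial B_{1/n}(0)}$) to reach the intermediate (respectively $\gamma=0$) behavior of $(\mathcal M_2)$. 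By comparison the sequence $\{u_n\}$ is monotone and trapped between the ``rough'' sub-solution of Lemma~\ref{con1} and the refined super-solution of Proposition~\ref{lim-o}, built from \eqref{node} and \eqref{fundam}; interior elliptic estimates on compact subsets of $\Omega\setminus\{0\}$ then deliver a limit $u$ solving \eqref{boun}, whose near-zero profile is pinned down by the very same barriers. This yields $u_h$ in (1)--(2) and the entire family $\{u_h^{(\gamma)}\}$ in (4); that these exhaust all solutions in Case $(\mathcal M_2)$ is furnished by the classification Theorem~\ref{th-cd}, which attaches to every solution a unique $\gamma\in[0,\infty]$.

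The non-existence for $h\equiv0$ in (3), and the exclusion of $\gamma=0$ in (4)(b), would follow from an energy identity. Testing \eqref{eq1} with $u$ over $\Omega$ is legitimate precisely because the relevant singularity is mild: $\Theta<p_-\le(N-2)/2$ in $(\mathcal M_1)$, the $\Phi_\lambda^-$-type decay (or boundedness, by Lemma~\ref{lem01}) in $(\mathcal N)$ and in the $\gamma=0$ class of $(\mathcal M_2)$, so that $u\in H^1_0(\Omega)$ and $|x|^{\theta}u^{q+1}\in L^1(\Omega)$; this gives $\int_\Omega|\nabla u|^2\,dx-\lambda\int_\Omega|x|^{-2}u^2\,dx+\int_\Omega|x|^{\theta}u^{q+1}\,dx=0$. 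Since $\lambda\le\lambda_H$, the Hardy inequality \eqref{har} renders the first two terms nonnegative, forcing $\int_\Omega|x|^{\theta}u^{q+1}\,dx=0$ and hence $u\equiv0$, contradicting $u>0$. The same bookkeeping settles the regularity in (1): when $\Theta<(N-2)/2$ and $h\equiv0$, the asymptotics $u_h\sim U_0$ give $u_h(x)/|x|\sim|x|^{-\Theta-1}$ and, using $\Theta q=\Theta+\theta+2$, also $|x|^{\theta+1}u_h^q\sim|x|^{-\Theta-1}$, both in $L^2(\Omega)$ exactly when $\Theta<(N-2)/2$; a cutoff near zero then yields $u_h\in H^1_0(\Omega)$ and \eqref{ditt}, the spurious boundary integral over $\partial B_\varepsilon(0)$ being $O(\varepsilon^{N-2-\Theta})\to0$ because $\Theta<(N-2)/2<N-2$.

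The step I expect to be the main obstacle is the existence half, namely ensuring that the limit of the approximating sequence is positive and finite on $\Omega\setminus\{0\}$ and realizes \emph{exactly} the prescribed leading coefficient near zero rather than degenerating or over/undershooting; this is where the sharpness of the sub/super-solutions of Lemma~\ref{con1} and Proposition~\ref{lim-o} is indispensable. Case $(\mathcal M_2)$ is the most demanding: producing a genuinely distinct solution for every $\gamma\in(0,\infty)$, certifying the boundary behavior $\lim_{|x|\to0}|x|^{p_-}u_h^{(0)}(x)\in(0,\infty)$ when $\gamma=0$, and confirming via Theorem~\ref{th-cd} that no further solutions arise, will require carefully tuned $\Phi_\lambda^\pm$-barriers near zero together with a continuity and monotonicity analysis of the solution map in the parameter $\gamma$.
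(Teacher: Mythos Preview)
Your overall architecture---uniqueness by matching the near-zero asymptotics and applying Lemma~\ref{compa}, existence by monotone approximation on $\Omega\setminus\overline{B_{1/n}(0)}$ with inner data tailored to the target profile, and classification in Case~$(\mathcal M_2)$ via Theorem~\ref{th-cd}---is exactly the scheme the paper uses (Lemmas~\ref{nonz} and~\ref{m2c}). Two points deserve correction.

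\textbf{The energy argument does not cover all of (3) and (4)(b).} Your non-existence proof tests with $u$ and invokes Hardy; this is precisely the alternative the paper records in Remark~\ref{arg}, and it works cleanly in Case~$(\mathcal M_1)$ because there $\Theta<p_-\le(N-2)/2$ forces $u\in H_0^1(\Omega)$. But in (4)(b) at $\lambda=\lambda_H$ a hypothetical $\gamma=0$ solution would, by Theorem~\ref{th-cd}(ii)(A), behave like $C|x|^{-(N-2)/2}$ near zero; then $u/|x|\notin L^2(\Omega)$ and the boundary contribution $\int_{\partial B_\varepsilon}u\,\partial_\nu u$ is of order $\varepsilon^{N-1}\cdot\varepsilon^{-(N-2)/2}\cdot\varepsilon^{-N/2}=O(1)$ and does \emph{not} vanish, so the energy identity fails. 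A related delicacy arises in Case~$(\mathcal N_3)$ (i.e.\ $\lambda=\lambda_H$, $\theta=\theta_\pm$): with the profile \eqref{111} one finds $|\nabla u|^2\sim |x|^{-N}(\log(1/|x|))^{-4/(q-1)}$, which is not integrable when $q\ge5$, so $u\notin H_0^1(\Omega)$ and your assertion that ``$u\in H_0^1$'' is unjustified there. The paper's main proof (Step~4 of Lemma~\ref{nonz} and the last paragraph of Lemma~\ref{m2c}) avoids energy entirely: from $\lim_{|x|\to0}u/\Phi_\lambda^+=0$ (Case~$(\mathcal N)$ and (4)(b)) or $\lim_{|x|\to0}|x|^{p_-}u=0$ (Case~$(\mathcal M_1)$) one gets $u\le\varepsilon\,\Phi_\lambda^+$ (respectively $u\le\varepsilon\,\Phi_\lambda^-$) near zero and on $\partial\Omega$ when $h=0$, hence everywhere by Lemma~\ref{compa}; letting $\varepsilon\to0$ gives $u\equiv0$. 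This comparison is uniform in $\lambda\le\lambda_H$ and is the argument you should use.

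\textbf{Minor: wrong sub-solution for positivity on $\Omega$.} For the positivity of the approximated limit in Case~$(\mathcal U)$ with $h\equiv0$ (and for $\gamma=\infty$ in Case~$(\mathcal M_2)$), the relevant barrier is $z_\delta$ of Lemma~\ref{con2}, the Kelvin transform of $w_\delta$, which is a sub-solution on $B_\delta(0)\setminus\{0\}$ vanishing on $\partial B_\delta(0)$; Lemma~\ref{con1} produces $w_\delta$ on exterior domains and is the tool for Corollary~\ref{fum}, not for the bounded-domain problem. In Cases~$(\mathcal M_1)$ and~$(\mathcal N)$ with $h\not\equiv0$, positivity of the limit comes from the strong maximum principle rather than from a sub-solution.
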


%\eqref{boun}. 
We point out that under the hypotheses of Theorem~\ref{est}, the behavior near zero for the unique solution $u_h$ of \eqref{boun} is provided by Theorem~\ref{mar} in  Cases~$(\mathcal U)$ and $(\mathcal M_1)$ and by Theorem~\ref{th-cd} in Case $(\mathcal N)$. 
The assertions (2) and (4) in Theorem~\ref{est} extend corresponding results in \cite{Cmem} for $\Omega=B_1(0)$. 
The novelty in Theorem~\ref{est} is given by the conclusions in (1) and (3). 

In contrast to Case~$(\mathcal U)$, the problem \eqref{boun} with $u=0$ on $\partial\Omega$ has no solutions in Case~$(\mathcal M_1)$. This can be shown using the Hardy inequality (see Remark~\ref{arg}) or by another argument relying on Theorem~\ref{mar}. 
%We employ Theorem~\ref{mar}, prescribing the asymptotic behavior near zero of the solutions of \eqref{eq1}, to  
%prove  that there are 
%no solutions of \eqref{boun} with $h=0$ in Case $(\mathcal M_1)$.  
Indeed, suppose that $u$ is a solution of \eqref{boun} with $u=0$ on $\partial\Omega$. Then, $\lim_{|x|\to 0} |x|^{p_-} \,u(x)=0$ in Case~$(\mathcal M_1)$ since $\Theta<p_-$. Hence, for every $\varepsilon>0$, we have $u(x)\leq \varepsilon |x|^{-p_-}$ for $|x|>0$ close to zero and for every $x\in\partial\Omega$ so that $0<u(x)\leq \varepsilon |x|^{-p_-}$ for every $x\in \Omega\setminus\{0\}$ in view of  Lemma~\ref{compa}. By letting $\varepsilon\to 0$, we arrive at $u\equiv 0$ in $\Omega\setminus\{0\}$, which is a contradiction. This argument can be easily adapted in Case $(\mathcal N)$ to establish the non-existence of solutions to \eqref{boun} with $h=0$.  

% is the non-existence assertion (3) in Cases $(\mathcal M_1)$ and $(\mathcal N)$, besides the uniqueness result of (1) for Case $(\mathcal U)$. 

For $\theta>-2$ in Case $(\mathcal U)$ and $h=0$, the existence and uniqueness claim in Theorem~\ref{est} was proved differently by Wei and Du \cite[Proposition~2.5]{LD2017}. Their analysis relied 
on rough estimates \cite[Lemma 2.4]{LD2017}: there exist positive constants $C_1,C_2,r_0$ such that every solution $u$ of \eqref{eq1} satisfies
\begin{equation} \label{blo} C_1 |x|^{-\Theta}\leq u(x)\leq C_2 |x|^{-\Theta}\quad \mbox{for all } 0<|x|<r_0.\end{equation}
Then, arguing by contradiction, any solution of \eqref{boun} with $h=0$ was shown to coincide with its minimal solution $w$ via the strong maximum principle and a convexity trick of Marcus and V\'eron \cites{mv1,mv2}. 
The condition $\lambda>\lambda_H$ was essential in gaining the minimal solution $w$ as the limit $\delta\to 0^+$ of the unique solution $u_\delta$ to \eqref{eq1} in $\Omega^\delta:=\Omega\setminus \overline{B_\delta(0)}$, subject to $u=0$ on $\partial \Omega^\delta$.  
It was shown that $\liminf_{|x|\to 0} w(x)/U_0(x)>0$ by a comparison with the unique solution of \eqref{eq1} in a suitable annular domain with zero Dirichlet boundary condition. 
The first inequality of \eqref{blo} shows that 
every solution of \eqref{eq1} blows-up at zero by the assumption $\theta>-2$ in Case $(\mathcal U)$. The second inequality in \eqref{blo} was derived for the maximal solution $U$ of \eqref{eq1} satisfying $U=\infty$ on $\partial \Omega$, which was constructed in \cite{LD2017} as the limit ($\delta\to 0^+$) of the unique solution $U_\delta$ to \eqref{eq1} on the approximate domain $\Omega^\delta$ with boundary blow-up. 

In Theorem~\ref{est} we show that Case $(\mathcal U)$ is the maximal range for which \eqref{boun} with $h=0$ on $\partial \Omega$ has a unique solution.  
Hence, not only we give an alternative proof of  \cite[Proposition 2.5]{LD2017} for $\theta>-2$ in 
Case~$(\mathcal U)$ but also extend its existence and uniqueness conclusion to the entire Case $(\mathcal U)$. 

We now give some ideas behind our proof of Theorem~\ref{est}. 
As a byproduct of 
Theorem~\ref{mar} in Cases $(\mathcal U)$ and $(\mathcal M_1)$ (and of Theorem~\ref{th-cd} in Case $(\mathcal N)$), jointly with  Lemma~\ref{compa}, we find that \eqref{boun} has at most a solution.  We obtain a non-negative solution $u_h$ of \eqref{boun} as the limit when $k\to \infty$ of the unique positive solution $u_{h,k}$ of \eqref{eq1} in $\Omega\setminus \overline{B_{1/k}(0)}$, subject to $u=h$ on $\partial\Omega$ and $u=C\,|x|^{-\Theta}$ on $\partial B_{1/k}(0)$, where $C>0$ is a large constant.  
When $h\not\equiv 0$ on $\partial\Omega$ in Cases~$(\mathcal M_1)$ and $(\mathcal N)$, the positivity of $u_h$ in $\Omega$ follows from the strong maximum principle. When $h=0$ in Cases~$(\mathcal U)$ and $(\mathcal M_2)$, we prove that $u_h>0$ in $\Omega$ by showing that $u_{h,k}(x)\geq z_\delta(x)$ for every $1/k\leq |x|\leq \delta$ and every $k\geq k_0$ large enough, where $z_\delta$ is defined by 
\begin{equation} \label{zide} z_\delta(x):=c\,U_0(x)\left[1-
\left(\frac{|x|}{\delta}\right)^\alpha \right]^{\frac{1}{\sqrt{\alpha}}} \quad \mbox{for every } 0<|x|\leq \delta.
\end{equation}
(Here, like for $w_\delta$ in \eqref{mic}, we fix $\alpha>0$ small,  depending only on $N,q,\theta$ and $\lambda$.) 
By applying the Kelvin transform to 
the sub-solution $w_\delta$ 
of \eqref{eq1} in
$\mathbb R^N\setminus \overline{B_\delta(0)}$ for Cases $(\mathcal U)$ and $(\mathcal M_1)$, we obtain that $z_\delta$ is a sub-solution of \eqref{eq1} in $B_\delta(0)\setminus \{0\}$ for Cases $(\mathcal U)$ and $(\mathcal M_2)$.

In Case $(\mathcal M_2)$, the solution $u_h$  constructed above for problem \eqref{boun} is the maximal one since it satisfies $\liminf_{|x|\to 0} u_h(x)/U_0(x)>0$, which yields 
$\lim_{|x|\rightarrow 0}u_h(x)/U_0(x)=1$  via Proposition~\ref{lim-o}. 
Case~$(\mathcal M_2)$ is the only one when \eqref{boun} has infinitely many solutions (see Section~\ref{estp} for details).

\vspace{0.2cm}
We next return to \eqref{eq1} with $\Omega=\mathbb R^N$. Taking Case~$(\mathcal M_1)$ separately from 
Case~$(\mathcal M_2)$, 
we determine in Theorem~\ref{mult1} all solutions of \eqref{eq1} in $\mathbb R^N\setminus \{0\}$, together with their behavior near zero and at  infinity. If it were to exist, such a solution would satisfy at zero the limit behavior given by Theorem \ref{th-cd}, whereas at infinity the precise behavior  
listed in Theorem~\ref{mar2}.  

\begin{theorem}[Multiplicity, Cases $(\mathcal M_1)$ and $(\mathcal M_2)$] \label{mult1} Let $\Omega=\mathbb R^N$.  
	\begin{enumerate}
		\item[{\rm (1)}] Let Case $(\mathcal M_2)$ hold. For every $\gamma\in (0,\infty)$, equation \eqref{eq1}, subject to \eqref{gio}, has a unique solution $u_\gamma$. 
		All solutions of problem \eqref{eq1} satisfy \eqref{kop2} and are radially symmetric, being given by  
		$U_0$ and $\{u_\gamma:\ \gamma\in (0,\infty) \}$. 
		In addition, we have $ u_\gamma\leq u_{\gamma'}\leq U_0$ in  $\mathbb R^N\setminus \{0\}$ for every $0<\gamma<\gamma'<\infty$ and 
		$ U_0(x)=\lim_{\gamma\to \infty} u_{\gamma}(x)$ for each $ x\in \mathbb R^N\setminus \{0\}$. 
		\item[{\rm (2)}] Let Case $(\mathcal M_1)$ hold. For every $\gamma\in (0,\infty)$, equation \eqref{eq1}, subject to %${\bf (E)}$ in \eqref{nop2}, 
		\begin{equation} \label{e}
		\lim_{|x|\to \infty}   \frac{|x|^{N-2}\,u(x)}{\Phi_\lambda^+(1/|x|)}=\gamma
		\end{equation}
		has a unique solution, say $U_\gamma$. 
		All solutions of problem  \eqref{eq1} satisfy \eqref{kop} and are radially symmetric, being given by  
		$U_0$ and $\{U_\gamma:\ \gamma\in (0,\infty) \}$.  	 			 
	\end{enumerate} 			
\end{theorem}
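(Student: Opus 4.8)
The plan is to prove parts (1) and (2) together by exploiting the Kelvin transform, which sends a solution $u$ of \eqref{eq1} in Case $(\mathcal M_1)$ to $\widehat u(x):=|x|^{2-N}u(x/|x|^2)$, a solution of \eqref{eq1} with $\theta$ replaced by $\widehat\theta=(N-2)q-(N+2+\theta)$ in Case $(\mathcal M_2)$, and conversely. Since $\ell(\theta)=\ell(\widehat\theta)$ and the transform fixes $U_0$ in \eqref{node} while interchanging behavior near zero with behavior at infinity, it identifies \eqref{kop} with \eqref{kop2} and the normalization \eqref{gio} with \eqref{e}; moreover it is order preserving and commutes both with the scaling $u\mapsto \mu^\Theta u(\mu\cdot)$ and with orthogonal maps. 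Hence it suffices to establish the full structure once and read off the other case. I would anchor the asymptotics in the case where each end is cheapest: near zero in $(\mathcal M_1)$ via Theorem~\ref{mar}, near zero in $(\mathcal M_2)$ via the classification Theorem~\ref{th-cd}, which are Kelvin-dual.

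The first step is a two-sided asymptotic description of every, not necessarily radial, solution. For $u$ in Case $(\mathcal M_1)$, the near-zero profile \eqref{kop} is exactly Theorem~\ref{mar}; for the profile at infinity I pass to $\widehat u$, which lives in Case $(\mathcal M_2)$, and apply Theorem~\ref{th-cd}: precisely one of $\widehat u/\Phi_\lambda^+\to\gamma\in(0,\infty)$, $\widehat u/U_0\to 1$, or $|x|^{p_-}\widehat u\to c\in(0,\infty)$ holds. Transported back, every solution in Case $(\mathcal M_1)$ acquires a radial leading profile at infinity, namely $\gamma|x|^{-p_-}$ as in \eqref{e}, or $U_0$, or the ``fast'' profile $c|x|^{-p_+}$; thus each solution is attached to a parameter $\gamma\in[0,\infty]$. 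The second step is uniqueness for each prescribed pair of leading profiles: if $u,v$ share the near-zero profile and the same profile at infinity, then for every $\tau>0$ the function $(1+\tau)v$ is a strict super-solution of \eqref{eq1} (because $q>1$) that strictly dominates $u$ in a punctured neighborhood of the origin and near infinity, so Lemma~\ref{compa}, applied on annuli $B_R\setminus\overline{B_\delta(0)}$ and letting $\delta\to 0^+$, $R\to\infty$, gives $u\le(1+\tau)v$; letting $\tau\to 0^+$ and symmetrizing yields $u\equiv v$. Taking $v=U_0$ identifies the $\gamma=\infty$ solution with $U_0$.

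The third step supplies existence, radial symmetry and the monotone family. For each finite $\gamma>0$ I would obtain the solution as the monotone limit, as $R\to\infty$, of the unique solutions on $B_R\setminus\{0\}$ carrying the normalization $\gamma$ and the Dirichlet datum inherited from $U_0$ on $\partial B_R$ (Theorem~\ref{est} in the Kelvin-dual formulation), using $U_0$ as a global super-solution and the refined sub-solutions \eqref{subso} of Proposition~\ref{lim-o}, together with the barriers of the type \eqref{mic}--\eqref{zide}, to keep the limit positive and pinned to the prescribed leading coefficient. Because the approximating problems carry radial data, their unique solutions are radial, hence so is the limit; combined with the uniqueness of the second step, every solution coincides with some radial $U_\gamma$ (respectively $u_\gamma$) or with $U_0$, so all solutions are radial. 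Comparison then gives the ordering, and passing to the increasing limit $\gamma\to\infty$ forces, via Theorem~\ref{th-cd} and Proposition~\ref{lim-o}, the bound $u_\gamma\le u_{\gamma'}\le U_0$ and $\lim_{\gamma\to\infty}u_\gamma=U_0$ in Case $(\mathcal M_2)$.

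Finally I would exclude the spurious branch $\gamma=0$. On the $(\mathcal M_2)$ side the invariance $u\mapsto\mu^\Theta u(\mu\cdot)$ maps $u_\gamma$ to $u_{\gamma'}$ with $\gamma'=\mu^{\Theta-p_+}\gamma$; since $\Theta>p_+$ in Case $(\mathcal M_2)$, the whole family is a single scaling orbit, and using the near-zero asymptotics of a fixed $u_{\gamma_0}$ one checks that $u_\gamma(x)\to 0$ for every fixed $x$ as $\gamma\to 0^+$. Any hypothetical solution $w$ with $w/\Phi_\lambda^+\to 0$ satisfies, by \eqref{kop2} and comparison against $(1+\tau)u_\gamma$, the estimate $w\le u_\gamma$ for all $\gamma>0$, whence $w\le\lim_{\gamma\to0^+}u_\gamma=0$, a contradiction; by Kelvin duality the fast-decay profile $c|x|^{-p_+}$ is excluded in Case $(\mathcal M_1)$. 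I expect the main obstacle to be the existence step: on the unbounded punctured space one must produce a solution realizing simultaneously the exact singular normalization at the origin and the profile \eqref{kop2} at infinity, where compactness is lost and one must match barriers at both ends and verify that the monotone limit neither degenerates nor alters the leading coefficient. The exclusion of the weakly singular branch is conceptually the crux, but the scaling invariance makes it short.
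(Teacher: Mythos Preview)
Your overall architecture matches the paper's: reduce Case $(\mathcal M_1)$ to Case $(\mathcal M_2)$ via Kelvin, establish the two-sided asymptotics from Theorems~\ref{mar} and~\ref{th-cd}/\ref{mar2}, prove uniqueness by comparison, build $u_\gamma$ as a limit of solutions on balls provided by Theorem~\ref{est}(4), deduce radial symmetry from uniqueness, and finally exclude the weakly singular branch. Two tactical choices differ. First, for uniqueness you compare $u$ directly with $(1+\tau)v$, which works because you have already secured \eqref{kop2} at infinity; the paper instead compares with $(1+\varepsilon)\tilde u+\varepsilon\,\Phi_\lambda^-$, which needs only the crude a~priori bound of Corollary~\ref{lem1} and thereby makes uniqueness independent of Theorem~\ref{mar2}. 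Second, and more substantively, your exclusion of the branch $\lim_{|x|\to 0}u/\Phi_\lambda^+=0$ proceeds by squeezing $w\le u_\gamma$ and using the scaling $u_\gamma=T_\mu(u_1)$ to force $u_\gamma\to 0$ as $\gamma\to 0^+$. The paper's argument is shorter and avoids both the existence of $u_\gamma$ and the scaling: if such a $w$ exists, then Theorem~\ref{mar2} gives $|x|^{p_+}w(x)\to 0$ at infinity, so $w\le \varepsilon\,|x|^{-p_+}$ (or, when $\lambda=\lambda_H$, $w\le \varepsilon R_\varepsilon^{-1}|x|^{-p_-}\log(R_\varepsilon/|x|)+\varepsilon|x|^{-p_-}$ on $B_{R_\varepsilon}$) at both ends, whence $w\equiv 0$ by Lemma~\ref{compa} after $\varepsilon\to 0$. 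Your route is correct but more circuitous; the paper's direct barrier is the cleaner way to kill option~{\bf (A)}. On existence, the paper takes boundary data $0$ on $\partial B_k$, yielding an increasing sequence trapped below $U_0$ and pinned at $\gamma$ via the explicit super-solution $(\gamma+\varepsilon)\Phi_\lambda^+ + U_0(1)\Phi_\lambda^-$ on $B_1$; your choice $h=U_0|_{\partial B_R}$ gives a decreasing family and also works, but you should name the barrier that prevents the leading coefficient from drifting in the limit.
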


By the Kelvin transform in \eqref{sum}, the claims of Theorem~\ref{mult1} in Case~$(\mathcal M_1)$ 
follow from those of Case~$(\mathcal M_2)$, the latter being treated in Proposition~\ref{prol}. 
Theorem~\ref{mult1} uncovers an unexpected feature: there are no solutions of \eqref{eq1} in $\mathbb R^N\setminus \{0\}$ satisfying $\lim_{|x|\to 0} |x|^{p_-}\,u(x)\in (0,\infty)$ for Case $(\mathcal M_2)$ and, hence, no solutions exist satisfying $\lim_{|x|\to \infty} |x|^{p_+}\,u(x)\in (0,\infty)$ for Case $(\mathcal M_1)$. 

We remark that in Case $(\mathcal M_2)$ (respectively, Case $(\mathcal M_1)$) of Theorem~\ref{mult1}, we can obtain the solutions $u_\gamma$ (respectively, $U_\gamma$) with $\gamma\in (0,\infty)$ from the solution corresponding to $\gamma=1$. We next make this point clear.  
Given $\mu>0$, let $T_\mu:C^1(\mathbb R^N\setminus \{0\})\to C^1(\mathbb R^N\setminus\{0\})$ be the operator defined by 
$$ T_\mu(u)(x):=\mu^\Theta u(\mu x)\quad \mbox{for every } x\in \mathbb R^N\setminus \{0\}. 
$$
Observe that whenever $U_0$ in \eqref{node} is well-defined such as in Cases $(\mathcal M_1)$ and $(\mathcal M_2)$, we have 
$ T_\mu(U_0)=U_0$. Moreover, the transformation $T_\mu$ sends a solution of \eqref{eq1} with $\Omega=\mathbb R^N$ into a solution of the same equation.  

Let $\Omega=\mathbb R^N$ and Case $(\mathcal M_2)$ hold. By Theorem~\ref{mult1}, there exists a unique solution $u_{1,\theta}$ of problem \eqref{eq1}, subject to $\lim_{|x|\to 0} u(x)/\Phi_\lambda^+(x)=1$. Moreover, $u_{1,\theta}$ is radially symmetric and satisfies $\lim_{|x|\to \infty} u_{1,\theta}(x)/U_0(x)=1$. Then, all solutions of \eqref{eq1} are given by $U_0$ and $\{T_\mu(u_{1,\theta}):\ 0<\mu<\infty\}$. 
(The solution $u_\gamma$ of \eqref{eq1}, subject to \eqref{gio}, corresponds to $T_\mu(u_{1,\theta})$ with $\mu=\gamma^{1/(\Theta-p_+)}$.) 

If Case $(\mathcal M_1)$ holds instead of Case $(\mathcal M_2)$, then all solutions of \eqref{eq1} are given by $U_0$ and $\{T_\mu(U_{1,\theta}):\ 0<\mu<\infty\}$, where $U_{1,\theta}$ is the Kelvin transform of $u_{1,\widehat{\theta}}$ with $\widehat{\theta}=(N-2)\,q-(N+2+\theta)$, namely, $U_{1,\theta}(x)=|x|^{2-N}\,u_{1,\widehat{\theta}}(x/|x|^2)$. 
(The solution $U_\gamma$ of \eqref{eq1}, subject to \eqref{e}, is $T_\mu(U_{1,\theta})$ with $\mu=\gamma^{1/(\Theta-p_-)}$.)     

\vspace{0.2cm} 
We next illustrate explicitly the findings of Theorem~\ref{mult1}.  

\vspace{0.2cm}
\noindent {\bf Example.} Fix $q>1$ and $-\infty<\lambda<\lambda_H$.

(i) In Case $(\mathcal M_2)$ if $\theta=\theta_++4\sqrt{\lambda_H-\lambda}$, then $ U_0\cup \{u_{\mu,\theta}:\ 0<  \mu<\infty\}$ represent all solutions of \eqref{eq1} in $\mathbb R^N\setminus \{0\}$, where we define 
$$ u_{\mu,\theta}(x):=
|x|^{-p_+}\left( \mu^{-2\sqrt{\lambda_H-\lambda}}+ [\ell(\theta)]^{-\frac{1}{2}}\,|x|^{2\sqrt{\lambda_H-\lambda}}\right)^{-\frac{2}{q-1}}\  \mbox{for }x\in \mathbb R^N\setminus \{0\}.
$$ 

(ii) In Case $(\mathcal M_1)$ if $\theta=\theta_--4\sqrt{\lambda_H-\lambda}$, then $U_0\cup \{U_{\mu,\theta}:\ 0< \mu<\infty\}$ is the set of all solutions of \eqref{eq1} in $\mathbb R^N\setminus \{0\}$, where we define 
$$ U_{\mu,\theta}(x):=
|x|^{-p_-}
\left( \mu^{2\sqrt{\lambda_H-\lambda}}+[\ell(\theta)]^{-\frac{1}{2}}\, |x|^{-2\sqrt{\lambda_H-\lambda}}\right)^{-\frac{2}{q-1}}\  \mbox{for } x\in \mathbb R^N\setminus \{0\}.
$$

In Case $(\mathcal N)$, we obtain 
that there are no solutions of \eqref{eq1} in $\mathbb R^N\setminus \{0\}$.  

\begin{theorem}[Non-existence, Case $(\mathcal N)$] \label{non-e}     
	Problem \eqref{eq1} with $\Omega=\mathbb R^N$ has no solutions in Case~$(\mathcal N)$.  	 
\end{theorem}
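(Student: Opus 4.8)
\emph{Strategy.} The plan is to reduce the statement, via spherical averaging, to a one–dimensional differential inequality of Osserman type, and then to rule out positive global solutions of that inequality. The starting point is that in Case $(\mathcal N)$ the condition $\theta_-\le \theta\le\theta_+$ is equivalent to $p_-\le\Theta\le p_+$, and since $p_\pm$ are the roots of $\Theta\mapsto \ell$ in \eqref{sigma}, this means precisely $\ell=\ell(\theta)\le 0$. This is exactly the regime in which the separated profile $U_0$ ceases to exist, and it is the sign of $\ell$ that will drive the contradiction. Crucially, I would not assume radial symmetry: the averaging step produces a radial object even from a general solution.

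First I would pass to Emden--Fowler variables. Given a solution $u$ of \eqref{eq1} with $\Omega=\mathbb{R}^N$, let $\bar u(r)$ be the average of $u$ over the sphere $\{|x|=r\}$ and set $\phi(t):=e^{\Theta t}\,\bar u(e^{t})$ with $t=\log|x|$. By interior elliptic regularity $u$ is smooth and positive away from $0$, so $\phi$ is a positive $C^2$ function on all of $\mathbb{R}$. Averaging \eqref{eq1} over spheres (the Laplacian commutes with the spherical mean, while $|x|^{\theta}$ and $|x|^{-2}$ are radial) and invoking Jensen's inequality $\overline{u^{q}}\ge \bar u^{\,q}$, valid since $s\mapsto s^{q}$ is convex for $q>1$, shows that $\bar u$ is a radial subsolution of \eqref{eq1}. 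A direct computation with $u=|x|^{-\Theta}\phi(\log|x|)$ then converts this into the autonomous inequality
\begin{equation*}
\ddot\phi+(N-2-2\Theta)\,\dot\phi+\ell\,\phi\ \ge\ \phi^{q}\qquad\text{on }\mathbb{R},
\end{equation*}
and, since $\ell\le 0$ and $\phi>0$, this implies $\ddot\phi+\beta\dot\phi\ge \phi^{q}$ with $\beta:=N-2-2\Theta$.

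The heart of the matter is the claim that no positive $\phi\in C^{2}(\mathbb{R})$ can satisfy $\ddot\phi+\beta\dot\phi\ge \phi^{q}$ when $q>1$. I would prove this by a nonlinear-capacity (rescaled test function) argument: testing against $\xi_R(t)=\zeta(t/R)^{k}$, with $\zeta$ a fixed cut-off equal to $1$ on $[-1/2,1/2]$ and $k$ large, and integrating by parts twice gives $\int \phi^{q}\xi_R\le C\int \phi\,(|\ddot\xi_R|+|\dot\xi_R|)$, whose right-hand side is supported in $\{R/2\le|t|\le R\}$ and carries a factor $R^{-1}$. A H\"older estimate with exponents $q$ and $q'=q/(q-1)$ yields
\begin{equation*}
\int_{|t|\le R/2}\phi^{q}\,dt\ \le\ C\,R^{-1/q}\Big(\int_{R/2\le|t|\le R}\phi^{q}\,dt\Big)^{1/q}.
\end{equation*}
Letting $R\to\infty$, this bound first shows $\phi^{q}\in L^{1}(\mathbb{R})$, and then the same bound (its right side now being a vanishing tail times $R^{-1/q}$) forces $\int_{\mathbb{R}}\phi^{q}=0$, contradicting $\phi>0$. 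Applying the claim to the $\phi$ constructed above completes the proof.

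The main obstacle I anticipate is controlling the damping term $\beta\dot\phi$, whose coefficient $\beta=N-2-2\Theta$ has indefinite sign across Case $(\mathcal N)$ (it vanishes exactly when $\lambda=\lambda_H$). The capacity argument is attractive precisely because it absorbs $\beta\dot\phi$ into the lower-order scaling and is insensitive to the sign of $\beta$. As a more hands-on alternative I would keep in reserve a phase-plane analysis: every critical point of $\phi$ is a strict local minimum (there $\ddot\phi\ge\phi^{q}>0$), so $\phi$ has at most one critical point and is monotone on a half-line; on the half-line where the damping has the favorable sign (switching $t\mapsto -t$ if needed, which flips the sign of $\beta$) one gets $\ddot\phi\ge\phi^{q}$, and multiplying by $\dot\phi$ and integrating produces $\dot\phi\ge c\,\phi^{(q+1)/2}$, so that $\int^{\infty}\phi^{-(q+1)/2}\,d\phi<\infty$ gives finite-time blow-up, contradicting that $\phi$ is defined on all of $\mathbb{R}$.
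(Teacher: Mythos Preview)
Your proof is correct and takes a genuinely different route from the paper's own argument.

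The paper proceeds by comparison: it first invokes the full classification of the behaviour at zero (Theorem~\ref{th-cd}, imported from \cite{Cmem}) and at infinity (Theorem~\ref{mar2}, obtained via the Kelvin transform from Theorem~\ref{mar}) to show that any putative solution satisfies $|x|^{p_-}u(x)\to 0$ as $|x|\to\infty$ together with the appropriate vanishing near $0$. It then builds explicit supersolutions $V_\varepsilon(x)=\varepsilon|x|^{-p_-}$ (or $\varepsilon|x|^{-\Theta}+\varepsilon|x|^{-p_-}$ when $\theta_-<\theta\le\theta_+$), applies the comparison principle of Lemma~\ref{compa} to trap $u\le V_\varepsilon$, and lets $\varepsilon\to 0$.

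Your approach bypasses all of this machinery. Spherical averaging plus Jensen reduces to a radial subsolution; the Emden--Fowler substitution converts it into $\ddot\phi+\beta\dot\phi+\ell\phi\ge\phi^{q}$ on $\mathbb{R}$; the key observation $\ell\le 0$ in Case~$(\mathcal N)$ lets you drop to $\ddot\phi+\beta\dot\phi\ge\phi^{q}$; and the Mitidieri--Pokhozhaev capacity estimate then rules out any positive global $\phi$. This is essentially self-contained (no classification theorems, no fundamental solutions $\Phi_\lambda^\pm$, no comparison principle), is insensitive to the sign of $\beta$, and handles the interior and endpoint subcases $\theta=\theta_\pm$ uniformly. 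The trade-off is that the paper's route, while heavier, reuses machinery already built for Theorems~\ref{mar} and~\ref{mult1} and yields finer information about local solutions along the way. One small technical remark: the displayed bound $\int_{|t|\le R/2}\phi^{q}\le C R^{-1/q}\bigl(\int_{R/2\le|t|\le R}\phi^{q}\bigr)^{1/q}$ does not by itself force $\phi^{q}\in L^{1}(\mathbb{R})$ without an a~priori upper bound (Corollary~\ref{lem1} supplies one, giving $\phi\le C_0$). The cleaner route is the weighted H\"older you already set up with $\xi_R=\zeta^{k}$ and $k\ge 2q'$, which yields directly $\int\phi^{q}\xi_R\le CR^{1-q'}\to 0$ and finishes in a single step.
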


This non-existence result is somehow startling and it ensues essentially from $\Omega=\mathbb R^N$ in \eqref{eq1}. Theorem~\ref{est} shows that \eqref{eq1} in $B_1(0)\setminus \{0\}$ admits solutions exhibiting near zero each of the behaviors prescribed by Theorem~\ref{th-cd}. Yet, surprisingly, in Case $(\mathcal N)$ none of these local solutions can be extended as a solution of \eqref{eq1} in $\mathbb R^N\setminus \{0\}$. Were it to exist, a solution of \eqref{eq1} in $\mathbb R^N\setminus \{0\}$ would have the limit behavior near zero and at infinity given in Table~\ref{tab:table1}. 
%It turns out that no such solution exists, that is, the local behavior at zero cannot coexist with the paired behavior at $\infty$. 
Using essentially such precise asymptotics in Case~$(\mathcal N)$, we are able to rule out the existence of solutions of \eqref{eq1} for $\Omega=\mathbb R^N$. 

%We point out that for  we rule out the  by essentially relying on the classification of the behavior near zero and at infinity for the solutions of \eqref{eq1}.  

\begin{table}[h!]
	%	\begin{center}
	\caption{Possible profiles in Case $(\mathcal N)$}
	\label{tab:table1}
	\begin{tabular}{l|c|c} % <-- Alignments: 1st column left, 2nd middle and 3rd right, with vertical lines in between
		\textbf{Case $(\mathcal N)$} & \textbf{behavior near zero in} & \textbf{behavior at infinity in}\\
		%	$\alpha$ & $\beta$ & $\gamma$ \\
		\hline
		$\theta_-<\theta<\theta+$ & \eqref{gin} & \eqref{gem}\\
		\hline
		$\theta=\theta_-<\theta_+$ & \eqref{110} & \eqref{gem}\\
		\hline
		$\theta=\theta_+>\theta_-$ & \eqref{gin} & \eqref{gem2}\\
		\hline
		$\theta=\theta_-=\theta_+$ & \eqref{111} & \eqref{gem3}\\
		\hline
	\end{tabular}
	%	\end{center}
\end{table}

%\begin{table}[h!]
%\caption{Possible profiles in Case $(\mathcal N)$}
%		\label{tab:table1}
%\begin{tabular}{l|c|c}
%\hline\noalign{\smallskip}
%\textbf{Case $(\mathcal N)$} & \textbf{behavior near zero in} & \textbf{behavior at infinity in}\\
%\noalign{\smallskip}\hline\noalign{\smallskip}
%$\theta_-<\theta<\theta+$ & \eqref{gin} & \eqref{gem}\\
%	\hline
%$\theta=\theta_-<\theta_+$ & \eqref{110} & \eqref{gem}\\
%	\hline
%$\theta=\theta_+>\theta_-$ & \eqref{gin} & \eqref{gem2}\\
%	\hline
%$\theta=\theta_-=\theta_+$ & \eqref{111} & \eqref{gem3}\\
%\noalign{\smallskip}\hline
%\end{tabular}
%\end{table}

\subsection{Applications to weighted divergence-form equations} \label{aplic}

Here, we consider a related problem that can be solved using our method and results from Section~\ref{mr}. For $N\geq 3$, we study the nonlinear elliptic problem 
\begin{equation} \label{abx}
\left\{\begin{aligned}
& {\rm div}\, (|x|^{-2a}\nabla v)+d\,|x|^{-2\left(1+a\right)}\,v=|x|^{b}\,v^q && \mbox{in }\mathbb R^N\setminus \{0\} ,&\\
& v>0 &&   \mbox{in }\mathbb R^N\setminus \{0\},& 
\end{aligned}\right.
\end{equation}
where $a,b,d,q\in \mathbb R$, in the super-linear case $q>1$.

Before stating our main result on \eqref{abx}, we indicate what is known in the literature. 
For $b=d=0$ and $-1<a<(N-2)/2$, the influence of the weight $|x|^{-2a}$ in the divergence-form elliptic operator on the existence and local behavior near zero of the singular solutions of \eqref{abx} in $B_1(0)\setminus\{0\}$ follows from \cite{bra}: there exist positive solutions satisfying $\lim_{|x|\to 0} |x|^{N-2-2a}\,v_\gamma(x)=\gamma$ for some $\gamma\in (0,\infty]$ if and only if $1<q<N/(N-2-2a)$; in turn, if $q\geq N/(N-2-2a)$, then every solution of \eqref{abx} in $B_1(0)\setminus\{0\}$ can be extended as a positive continuous solution of \eqref{abx} in $B_1(0)$. In fact, more general weights than $|x|^{-2a}$ were considered in \cite{bra} using the framework of regular variation theory. 
%(see \cite[Theorems 2 and 3]{bra}). 
For recent generalizations of these local existence and classification results to weighted quasilinear elliptic equations, see \cite{cc,song}. 

Returning to \eqref{abx} with $d=0$, we point out that the local behavior near zero has not been fully elucidated given that in the above-mentioned works, the parameters $a$ and $b$ are restricted to specific ranges (e.g., $a\leq (N-2)/2$ and $b>-N$), the focus being on the existence of singular solutions near zero (see, for example, Remark~1.1 in \cite{song}). Unfortunately, this limits our understanding of the behavior at infinity for the solutions of \eqref{abx}; if $v$ is a solution of \eqref{abx}, then by applying a generalized Kelvin transform, namely,  \begin{equation} \label{kge} \widehat{v}(x):=|x|^{2-N+2a}\,v(x/|x|^2), \end{equation}
it is readily seen that $\widehat{v}$ satisfies \eqref{abx} but with $b$ replaced by \begin{equation} \label{bhat}\widehat{b}:=(N-2-2a)\,q-(N+2a+b+2).\end{equation}

Using our main results regarding problem \eqref{eq1} with $\Omega=\mathbb R^N$, for every $a,b,d\in \mathbb R$ and $q>1$, we obtain in Theorem~\ref{extra0} a sharp criterion for the existence of solutions of \eqref{abx}, together with their exact profile near zero and at infinity. 
As a consequence, we derive that whenever they exist, {\em all} solutions of \eqref{abx} are radially symmetric. For ease of reference, we define
\begin{equation} \label{nota} \sigma:=\frac{2a+b+2}{q-1},
%\quad  \xi:=2a-N+2,
\quad \rho:=a-\frac{N-2}{2}\quad \mbox{and}\quad \ell:=(\sigma+\rho)^2-\rho^2+d. 
\end{equation}
We next state our main result concerning \eqref{abx}.

\begin{theorem} \label{extra0} 
	Problem \eqref{abx} has a solution if and only if $\ell>0$. 
	\begin{enumerate}
		\item[{\rm (i)}]
		If $d>\rho^2$, then problem \eqref{abx} has a unique solution given by 
		\begin{equation} \label{vze} v_0(x):=\ell^{\frac{1}{q-1}} |x|^{-\sigma}\quad \mbox{for every } x\in \mathbb R^N\setminus \{0\}. \end{equation}
		\item[{\rm (ii)}] If $(\sigma+\rho)^2>\rho^2-d\geq 0$ (for $\sigma\not=-\rho$), then \eqref{abx} has 
		infinitely many solutions, all radially symmetric and their total set is $v_0\cup \{v_\gamma:\ \gamma\in (0,\infty) \}$.
		For every $\gamma\in (0,\infty)$, we denote by $v_\gamma$ the unique solution of \eqref{abx} that satisfies the limit behavior near zero and at infinity given in Table~\ref{tab:table3}.
	\end{enumerate}
	%	\begin{center}
	\begin{table}[h!]
		\begin{center}
			\caption{Precise asymptotics for $v_\gamma$}
			\label{tab:table3}
			\begin{tabular}{c|l|c|c} % <-- Alignments: 1st column left, 2nd middle and 3rd right, with vertical lines in between
				\textbf{Case} &
				\textbf{Criterion for existence} & \textbf{Behavior as $|x|\to 0$} & \textbf{Behavior as $|x|\to \infty$}\\
				%	$\alpha$ & $\beta$ & $\gamma$ \\
				
				\hline
				$(M_{11})$ &	 $d<\rho^2$,\  $\sigma+\rho<-\sqrt{\rho^2-d}$ & $\displaystyle \frac{v_\gamma(x)}{v_0(x)}\to 1$ & $\displaystyle\frac{v_\gamma(x)}{|x|^{\rho
						+\sqrt{\rho^2-d}}}\to \gamma$ \\			 
				\hline
				$(M_{21})$ &	$d<\rho^2$,\   $\sigma+\rho>\sqrt{\rho^2-d}$ & $ \displaystyle\frac{v_\gamma(x)}{|x|^{\rho-\sqrt{\rho^2-d}}}\to\gamma$ & $  \displaystyle\frac{v_\gamma(x)}{v_0(x)}\to 1$\\
				
				\hline
				$(M_{12})$	& $d=\rho^2$ and $\sigma<-\rho$ & $\displaystyle \frac{v_\gamma(x)}{v_0(x)}\to 1$ & $\displaystyle\frac{ v_\gamma(x)}{|x|^{\rho}\log |x|}\to \gamma$ \\	
				
				\hline
				$(M_{22})$ &	$d=\rho^2$ and $\sigma>-\rho$ & $\displaystyle \frac{\,v_\gamma(x)}{|x|^{\rho}\log (1/|x|)}\to \gamma$ & $\displaystyle \frac{v_\gamma(x)}{v_0(x)}\to 1$ \\	
				
				\hline
			\end{tabular}
		\end{center}
	\end{table}
	%	\end{center}
\end{theorem}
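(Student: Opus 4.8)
The plan is to reduce \eqref{abx} to the problem \eqref{eq1} with $\Omega=\R^N$, whose entire solution structure we have already settled, and then transport every conclusion back. I would introduce the substitution $v(x)=|x|^{a}u(x)$, equivalently $u(x)=|x|^{-a}v(x)$, and compute the action of the weighted operator on $v$. A direct calculation, in which the exponent $a$ in $v=|x|^{a}u$ is exactly what forces the emerging first-order term $x\cdot\nabla u$ to cancel, gives
\begin{equation*}
{\rm div}\,(|x|^{-2a}\nabla v)+d\,|x|^{-2(1+a)}v=|x|^{-a}\left(\Delta u+\lambda\,|x|^{-2}u\right),\qquad \lambda:=a(N-2-a)+d.
\end{equation*}
Since $|x|^{b}v^{q}=|x|^{b+aq}u^{q}$, multiplying \eqref{abx} by $|x|^{a}$ shows that $v$ solves \eqref{abx} if and only if $u$ solves \eqref{eq1} on $\R^N\setminus\{0\}$ with the \emph{same} $q$ and with $\theta:=b+a(q+1)$. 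As $|x|^{a}$ is smooth and positive away from the origin, this substitution is a bijection of the positive $C^{1}(\R^N\setminus\{0\})$ functions, and the change of test function $\varphi\mapsto|x|^{a}\varphi$ shows it also respects the distributional formulation \eqref{dir}; hence solutions correspond to solutions.

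Next I would record the dictionary between the two parameter sets. With $\rho=a-(N-2)/2$ one finds $\lambda=\lambda_H-\rho^{2}+d$, so that $d>\rho^{2}$, $d=\rho^{2}$, $d<\rho^{2}$ correspond exactly to Case $(\mathcal U)$, the threshold $\lambda=\lambda_H$, and $\lambda<\lambda_H$; moreover $\Theta=(\theta+2)/(q-1)=\sigma+\rho+(N-2)/2$ and, crucially, a short computation yields
\begin{equation*}
\ell(\theta)=\Theta^{2}-(N-2)\Theta+\lambda=(\sigma+\rho)^{2}-\rho^{2}+d=\ell,
\end{equation*}
with $\ell$ as in \eqref{nota}. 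Because the four Cases $(\mathcal U),(\mathcal M_1),(\mathcal M_2),(\mathcal N)$ partition all $(\lambda,\theta)$ and the first three are precisely $\ell(\theta)>0$, \eqref{eq1} is solvable iff $\ell(\theta)>0$ (existence by Theorems~\ref{uniq} and \ref{mult1}, non-existence in Case $(\mathcal N)$ by Theorem~\ref{non-e}); transported, this is exactly ``\eqref{abx} is solvable iff $\ell>0$''. Assertion (i) then follows from Theorem~\ref{uniq} (as $d>\rho^{2}\Leftrightarrow$ Case $(\mathcal U)$), upon noting $v_{0}=|x|^{a}U_{0}=\ell^{1/(q-1)}|x|^{-\sigma}$ since $\Theta-a=\sigma$, which is \eqref{vze}. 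Assertion (ii) follows from Theorem~\ref{mult1}, since $(\sigma+\rho)^{2}>\rho^{2}-d\ge 0$ is equivalent to $\ell>0$ together with $\lambda\le\lambda_H$, i.e.\ to Cases $(\mathcal M_1)\cup(\mathcal M_2)$; radial symmetry of every $v$ is inherited from that of $u$ guaranteed there.

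To obtain Table~\ref{tab:table3} I would split into the four sub-cases via $\Theta-p_{-}=\sigma+\rho+\sqrt{\rho^{2}-d}$ and $\Theta-p_{+}=\sigma+\rho-\sqrt{\rho^{2}-d}$ (valid once $\lambda\le\lambda_H$, where $p_{\pm}=(N-2)/2\pm\sqrt{\rho^{2}-d}$): the sign conditions identify $(M_{11}),(M_{12})$ with Case $(\mathcal M_1)$ and $(M_{21}),(M_{22})$ with Case $(\mathcal M_2)$, while $d<\rho^{2}$ versus $d=\rho^{2}$ distinguishes the pure-power from the logarithmic fundamental solution in \eqref{fundam}. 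Each asymptotic entry then comes from multiplying the corresponding normalization of $u$ by $|x|^{a}$: the identity $v/v_{0}=u/U_{0}$ turns \eqref{kop} (near zero, Case $(\mathcal M_1)$) and \eqref{kop2} (at infinity, Case $(\mathcal M_2)$) into the ``$\to 1$'' columns, whereas \eqref{gio} (near zero, Case $(\mathcal M_2)$) and \eqref{e} (at infinity, Case $(\mathcal M_1)$) produce the power, respectively power-times-log, columns after inserting $\Phi_\lambda^{+}$ from \eqref{fundam} and simplifying $a-p_{\pm}=\rho\mp\sqrt{\rho^{2}-d}$ and $a-(N-2)/2=\rho$; the ensuing reparametrization of $\gamma$ is absorbed into the constant.

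The computation is elementary, so the main points requiring genuine care are (a) checking that $v=|x|^{a}u$ is a bijection at the level of the weak formulation \eqref{dir}, so that ``solution'' of \eqref{abx} matches ``solution'' of \eqref{eq1}, and (b) the bookkeeping across the four sub-cases, in particular handling the logarithmic factor of $\Phi_\lambda^{+}$ at the threshold $d=\rho^{2}$ and verifying that the exponents $\rho\pm\sqrt{\rho^{2}-d}$ delivered by \eqref{gio} and \eqref{e} match Table~\ref{tab:table3} exactly. I anticipate no analytic obstacle beyond this bookkeeping, since all the hard existence, uniqueness, multiplicity, non-existence and asymptotic content is already carried by Theorems~\ref{uniq}, \ref{mult1} and \ref{non-e}.
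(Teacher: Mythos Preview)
Your proposal is correct and follows essentially the same approach as the paper: you introduce the transformation $u(x)=|x|^{-a}v(x)$, compute that it converts \eqref{abx} into \eqref{eq1} with $\lambda=d+a(N-2-a)$ and $\theta=a(1+q)+b$, and then invoke Theorems~\ref{uniq}, \ref{mult1} and \ref{non-e}. The paper's own proof is a two-line sketch of exactly this reduction, so your more detailed parameter dictionary and case-by-case translation of the asymptotics into Table~\ref{tab:table3} simply fills in the bookkeeping that the paper leaves to the reader.
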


\vspace{-0.2cm} 
Theorem~\ref{extra0} appears here for the first time except for $a=b=d=0$. 

We note the connection between various cases displayed in Table~\ref{tab:table3}. For a solution $v$ of \eqref{abx} in Case $(M_{11})$ (respectively, $(M_{12})$), its generalized Kelvin transform $\widehat{v}$ in \eqref{kge} is a solution of \eqref{abx} (with $b=\widehat{b}$ in \eqref{bhat}) in Case~$(M_{21})$ (respectively, $(M_{22})$). (Indeed, if we denote by $\widehat{\sigma}$ the value we obtain for $\sigma$ when $b$ is replaced by $\widehat{b}$, then $\widehat \sigma=-2\rho-\sigma$ and thus the condition $\sigma+\rho<-\sqrt{\rho^2-d}$ in $(M_{11})$ translates as $\widehat{\sigma}+\rho>\sqrt{\rho^2-d}$ in $(M_{21})$.) 

To obtain Theorem~\ref{extra0}, 
for a solution $v$ of \eqref{abx}, we use the transformation
\begin{equation} \label{tra} u(x):=|x|^{-a}\,v(x).
\end{equation}
Then, a direct calculation shows that $u$ is a positive solution of %\eqref{eq1}, namely, 
\begin{equation} \label{fip}  -\Delta u-\frac{\lambda}{|x|^2}u+|x|^{\theta}u^q=0\quad \mbox{in }\mathbb R^N\setminus \{0\},
\end{equation}
where $\lambda$ and $\theta$ are here given by 
$$ \lambda:=d+a\left(N-2-a\right)\quad \mbox{and}\quad \theta:=a\left(1+q\right)+b. 
$$
%Observe that $\lambda\leq \lambda_H$ with equality if and only if $\xi=0$. 
Hence, Theorem~\ref{extra0} follows by applying Theorems~\ref{uniq}, \ref{mult1} and \ref{non-e} for problem \eqref{fip}, then using the transformation in \eqref{tra}. 

\begin{remark}
	Due to \eqref{tra}, we can put problem \eqref{abx} in the same framework as in \eqref{eq1} and reformulate all our findings for \eqref{boun} in Theorem~\ref{est} to obtain corresponding conclusions for \eqref{abx} in $\Omega\setminus \{0\}$, subject to $u=h\geq 0$ on $\partial\Omega$, where $\Omega\subset \mathbb R^N$ is a smooth bounded domain containing zero and $h\in C(\partial\Omega)$. We leave these statements to the reader, who would then be able to get a full picture of all solutions of \eqref{abx} whether considered locally or globally.   
\end{remark}

In special cases, problems of the type  \eqref{abx} but with an opposite sign in the right-hand side of \eqref{abx} have been studied extensively by many authors motivated by    
applications to Riemannian geometry,    
as well as by various connections with the Caffarelli--Kohn--Nirenberg inequalities (e.g.,   \cite{dan,cat,cat2,Invent} and references therein); their treatment is based on variational or moving plane methods or uses the finite dimensional reduction of Lyapunov--Schmidt. 

In this paper, we follow a different approach since the sign in the right-hand side of \eqref{abx} does not allow us to use  
moving plane techniques, whereas variational methods cannot be employed here because of certain types of singularities that appear near zero for the solutions of \eqref{abx}. 

\vspace{0.2cm}
{\bf Structure of the paper.} In Theorem~\ref{th-cd} of Section~\ref{mainr}, we recall from \cite{Cmem} all the profiles near zero for the solutions of \eqref{eq1} with $\Omega=\Omega_0$. Based on this result and using   
the Kelvin transform and Theorem~\ref{mar}, we deduce in Theorem~\ref{mar2} the asymptotic behavior at infinity for the solutions of \eqref{eq1} with $\Omega=\Omega_\infty$.  
In Section~\ref{rou} we check that the functions $w_\delta$ and $z_\delta$ given in 
\eqref{mic} and \eqref{zide}, respectively are
sub-solutions of \eqref{eq1} on suitable domains. In Section~\ref{sect2} we include basic ingredients that will be often used in the sequel such as the comparison principle in Lemma~\ref{compa} and the {\em a priori} estimates in Lemma~\ref{lem01}.  
In Section~\ref{t31} we prove Theorem~\ref{uniq} and Theorem~\ref{mar}. In Section~\ref{estp} we establish the assertions of Theorem~\ref{est} on the existence of solutions 
of \eqref{boun}. We dedicate Section~\ref{multi} to the proof of Theorem~\ref{mult1}. The claim of  Theorem~\ref{non-e} is proved in Section~\ref{sec5}. We conclude the paper with comments and remarks in Section~\ref{com-rem}.

\section{Asymptotic behavior near zero / at infinity} \label{mainr}

For $\lambda\leq \lambda_H$ and $\theta>-2$
the sharp local behavior near zero and existence of solutions of \eqref{eq1} in $B_1(0)\setminus \{0\}$ is established in \cite{Cmem}, presenting a great diversity, which is recalled in Theorem~\ref{th-cd}. 
The study in \cite{Cmem} concerned more general nonlinear elliptic equations than \eqref{eq1} by invoking  regularly varying functions (the weight $|x|^\theta$ in \eqref{eq1} was replaced by a regularly varying function at zero with index $\theta>-2$). Some results in \cite{Cmem} such as those in Chapter 3.1 and the {\em a priori} estimates of Lemma~4.1 when applied to our equation \eqref{eq1} carry over beyond the range $\theta>-2$ (see Lemma~\ref{lem01} in Section~\ref{sect2}).

%Likewise, the proof of (i) in Case 

\begin{theorem}[See Chapter 7 in \cite{Cmem}] \label{th-cd}
	Let $\Omega=\Omega_0$, $\theta>-2$ and  
	$u$ be any solution of problem \eqref{eq1}.   
	\begin{enumerate}
		\item[{\rm (i)}] If Case $(\mathcal M_1)$ holds, then $u$ satisfies \eqref{kop};
		%	where $U_0$ is given by 
		\item[{\rm (ii)}] 
		If Case $(\mathcal M_2)$ holds, then 
		exactly one of the following occurs:
		\begin{enumerate}
			\item[${\bf (A)}$] 
			$ 	\lim_{|x|\to 0}  |x|^{p_-}\,u(x)\in (0,\infty)$;
			\item[${\bf (B)}$] There exists $\gamma\in (0,\infty)$ such that $\lim_{|x|\to 0} u(x)/\Phi_\lambda^+(x)=\gamma$;
			\item[${\bf (C)}$] $u$ satisfies \eqref{kop}. 
			%		$		\lim_{|x|\to 0} u(x)/U_0(x)=1$. 
		\end{enumerate} 
		\vspace{0.2cm}
		\item[{\rm (iii)}]
		Assume Case $(\mathcal N)$. 
		%, that is, $\theta_-\leq \theta\leq \theta_+$.  
		Then, $\lim_{|x|\to 0} u(x)/\Phi_\lambda^+(x)=0$ and we have
		\begin{enumerate}
			\item[$(\mathcal N_1)$] If  
			$\theta_-<\theta\leq \theta_+$ for $\lambda<\lambda_H$, then
			\begin{equation} \label{gin} \lim_{|x|\to 0} |x|^{p_-}\,u(x)\in (0,\infty);\end{equation}  
			\item[$(\mathcal N_{2})$] If 
			$\theta=\theta_-<\theta_+$ for
			$\lambda<\lambda_H$, then 
			$u$ satisfies
			\begin{equation} \label{110}
			\lim_{|x|\to 0} |x|^{p_-} \left (\log \frac{1}{|x|}\right )^{\frac{1}{q-1}}u(x)=\left (\frac{N-2-2p_-}{q-1}\right )^{\frac{1}{q-1}};
			\end{equation}
			\item[$(\mathcal N_{3})$] If $\theta=\theta_-=\theta_+$ for $\lambda=\lambda_H$, then $u$ satisfies
			\begin{equation} \label{111}
			\lim_{|x|\rightarrow 0} |x|^{\frac{N-2}{2}}\left (\log \frac{1}{|x|}\right )^{\frac{2}{q-1}}u(x)=\left [\frac{2\left(q+1\right)}{(q-1)^2}\right ]^{\frac{1}{q-1}}.
			\end{equation}
		\end{enumerate}
	\end{enumerate}
\end{theorem}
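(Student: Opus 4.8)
The plan is to reduce the whole classification to a phase-plane analysis of a single autonomous ODE governing the radial profile, and then to upgrade the radial picture to arbitrary (non-radial) solutions in $\Omega_0$ by means of the a priori estimates. First I would perform the Emden--Fowler change of variables $u(x)=|x|^{-\Theta}v(t)$ with $t=\log(1/|x|)$, so that $|x|\to 0$ corresponds to $t\to+\infty$. Because the weight is the pure power $|x|^{\theta}$, the radial form of \eqref{eq1} becomes the \emph{autonomous} equation
\begin{equation}\label{efode}
\ddot v+\bigl(2\Theta-(N-2)\bigr)\,\dot v+\ell\,v-v^{q}=0,
\end{equation}
where $\ell=\ell(\theta)$ is as in \eqref{sigma} and the dot denotes $d/dt$. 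Its equilibria are $v\equiv 0$ and, when $\ell>0$, $v\equiv\bar v:=\ell^{1/(q-1)}$, the latter corresponding exactly to $U_0$ from \eqref{node}. Linearizing \eqref{efode} at $v=0$ produces the characteristic roots $p_\pm-\Theta$, so a trajectory decaying like $e^{(p_\pm-\Theta)t}$ translates into $u\sim|x|^{-p_\pm}$, i.e. fundamental-solution behavior; linearizing at $\bar v$ gives a product of roots equal to $-(q-1)\ell<0$, so $\bar v$ is a saddle whose stable manifold carries the profile $u\sim U_0$.

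The core is the global analysis of \emph{positive, bounded} trajectories of \eqref{efode} as $t\to+\infty$, the bound $0<v\le C$ being furnished by Lemma~\ref{lem01}. Introducing the energy $E=\tfrac12\dot v^2+\tfrac{\ell}{2}v^2-\tfrac{1}{q+1}v^{q+1}$, one finds $\dot E=-(2\Theta-(N-2))\,\dot v^2$, which has a fixed sign, so a LaSalle-type invariance principle forces every bounded positive trajectory to converge to an equilibrium, whose identity is dictated by the sign configuration of the eigenvalues $p_\pm-\Theta$. In Case $(\mathcal M_1)$ we have $\Theta<p_-$, both eigenvalues positive, so $0$ is a source and no bounded positive orbit can tend to it; hence $v\to\bar v$ and $u\sim U_0$, which is \eqref{kop} and proves (i). In Case $(\mathcal M_2)$ we have $\Theta>p_+$, both eigenvalues negative, so $0$ is a sink: a generic orbit approaches it tangent to the slow direction $p_+-\Theta$, giving $u\sim|x|^{-p_+}$, i.e. behavior (B) governed by $\Phi_\lambda^+$; the exceptional strong-stable orbit gives $u\sim|x|^{-p_-}$, behavior (A); and the saddle's stable manifold yields $u\sim U_0$, behavior (C). That these are mutually exclusive and exhaustive is precisely the trichotomy in (ii). In Case $(\mathcal N)$ we have $p_-\le\Theta\le p_+$ and $\ell\le 0$, so $0$ is a saddle (or degenerate) and $\bar v$ is absent; a convergent positive orbit must approach $0$ along the stable direction $p_--\Theta\le 0$, giving $\lim_{|x|\to0}u(x)/\Phi_\lambda^+(x)=0$ together with \eqref{gin}.

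The delicate borderline identities \eqref{110} and \eqref{111} are the \emph{resonant} cases $\Theta=p_-$ (so $\ell=0$ and the slow eigenvalue vanishes) and $\lambda=\lambda_H$ (so both eigenvalues vanish), where linearization is inconclusive. For these I would pass to a center-manifold reduction of \eqref{efode}: balancing the damping term against the nonlinearity one seeks $v(t)\sim C\,t^{-1/(q-1)}$ (respectively $v(t)\sim C\,t^{-2/(q-1)}$ when both eigenvalues vanish), and matching the leading powers of $t$ in $(2\Theta-(N-2))\dot v\approx v^{q}$ pins down $C$, reproducing $\bigl(\tfrac{N-2-2p_-}{q-1}\bigr)^{1/(q-1)}$ in \eqref{110} and its analogue in \eqref{111}. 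To remove the radial restriction I would then invoke Lemma~\ref{lem01} and decompose $u$ near zero into spherical harmonics $u=\sum_k u_k(r)Y_k$: replacing $\lambda$ by $\lambda-k(k+N-2)$ in the $k$-th mode, the spectral gap of $-\Delta_{S^{N-1}}$ makes all modes $k\ge1$ strictly less singular than the radial mode $k=0$, so that the leading asymptotics are governed by $u_0$, which obeys \eqref{efode} up to lower-order terms.

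I expect the main obstacle to be twofold. First, making the dichotomy rigorous in Case $(\mathcal M_2)$ requires excluding non-convergent (oscillating or escaping) orbits and verifying that the relevant stable manifolds are exactly one-dimensional; this forces the careful combined use of the monotone energy $E$ and the a priori ceiling $v\le C$. Second, and harder, are the resonant cases $\theta=\theta_\pm$ and $\lambda=\lambda_H$, where the exact constants demand a genuine center-manifold / matched-asymptotics computation rather than linearization: controlling the error terms there, and simultaneously controlling the angular modes when the radial exponents are themselves degenerate, is the most technical point.
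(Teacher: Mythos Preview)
The theorem is quoted from \cite{Cmem} and the paper does not reprove it; the brief discussion following the statement indicates the approach used there, which differs from yours in two respects. For the radial ODE, \cite{Cmem} does \emph{not} use the Emden--Fowler substitution and phase-plane analysis; instead it sets $y(s)=u(r)/\Phi_\lambda^-(r)$ with $s=\Phi_\lambda^+(r)/\Phi_\lambda^-(r)$, obtaining a non-autonomous second-order equation of Taliaferro type $y''=\varphi(s)\,y^{q}$, and then invokes Taliaferro's asymptotic equivalence theorem \cite{tal} (together with explicit integral divergence conditions) to identify the limits. Your autonomous-ODE/energy/LaSalle route is a perfectly respectable alternative, and in some ways more transparent for distinguishing the three alternatives in Case~$(\mathcal M_2)$; the price is that the borderline constants \eqref{110}--\eqref{111} require a genuine center-manifold computation, whereas the Taliaferro framework handles the resonant cases by the same integral criterion.

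There is, however, a real gap in your reduction to the radial case. You propose to expand $u$ in spherical harmonics and argue mode by mode, replacing $\lambda$ by $\lambda-k(k+N-2)$. This is a linear device: the term $|x|^{\theta}u^{q}$ couples all angular modes, so the $k$th projection of $u$ does \emph{not} satisfy the scalar equation with shifted $\lambda$, and the ``spectral gap'' argument as written does not go through. The route actually taken in \cite{Cmem} (and recapitulated in the paper via Lemma~\ref{lem01} and Lemma~\ref{regu}) is different: the a priori bound $u(x)\le C|x|^{-\Theta}$ together with the $C^{1,\alpha}$ gradient estimate \eqref{eti} gives a Harnack-type control showing that $u(r\omega)/\bar u(r)\to 1$ uniformly in $\omega\in S^{N-1}$, where $\bar u$ is the spherical average; one then shows $\bar u$ satisfies the radial equation up to an error that is lower order, and the ODE asymptotics for $\bar u$ transfer back to $u$. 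If you want to keep your phase-plane strategy, you should replace the spherical-harmonic step by this averaging/Harnack argument.
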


We remark that the condition $\theta>-2$ in Theorem~\ref{th-cd} can be removed (relevant for Case~$(\mathcal M_1)$ and items $(\mathcal N_1)$ and $(\mathcal N_2)$ in (iii)) and the conclusions extended according to the specified case. 
We indicate why in Case $(\mathcal M_1)$ the condition $\theta>-2$ is not needed to reach \eqref{kop}. The idea in \cite{Cmem} is to reduce the proof of \eqref{kop} to the case of radially symmetric solutions $u(r)=u(|x|)$ in $B_1(0)\setminus \{0\}$ and for these to use a suitable change of variable: 
$$ 
%\begin{aligned} 
y(s)=u(r)/\Phi_\lambda^-(r)\mbox{ with }s= \Phi_\lambda^+(r)/\Phi_\lambda^-(r).
$$ 
In Case $(\mathcal M_1)$,  the {\em a priori} estimates in Lemma~\ref{lem01} (see Section~\ref{sect2}) imply that $\lim_{r\to 0^+} u(r)/\Phi_\lambda^+(r)=0$. Since     
$\lim_{\tau\to 0} \int_\tau^1 r^{1+\theta-(q-1)p_-}\,dr=\infty$ if $\lambda<\lambda_H$ and 
$\lim_{\tau\to 0}\int_\tau^{1/2} r^{\theta+[N-q(N-2)]/2}\,\log\,(1/r)\,dr=\infty $ if $\lambda=\lambda_H$,  then
Theorem 1.1 in Taliaferro \cite{tal} gives that any two positive solutions of the differential equation satisfied by $y$ are asymptotically equivalent at $\infty$. This means that every positive radial solution $u$ of \eqref{eq1} in $B_1(0)\setminus \{0\}$ satisfies $\lim_{r\to 0^+}u(r)/U_0(r)=1$. 
The ingredients used to reduce to the radial case work for every $\theta\in \mathbb R$, see Lemma~\ref{lem01}, Lemma \ref{regu} and Remark~\ref{r-regu}.

The claim of Theorem~\ref{th-cd} in Case $(\mathcal N_{1})$ holds for every $\theta_-<\theta\leq \theta_+$ with the same proof as for $\max\{-2,\theta_-\}<\theta\leq \theta_+$. Similarly, 
with the methods in \cite{Cmem}, the assertion of Case $(\mathcal N_2)$, which was proved for $\theta=\theta_-$ and $0<\lambda<\lambda_H$, remains valid for $\lambda\leq 0$.

\vspace{0.2cm}
From Theorem~\ref{mar} and Theorem \ref{th-cd}, we gain full understanding of the limit behavior near zero for all solutions of \eqref{eq1} for every $\theta,\lambda\in \mathbb R$ and $q>1$. % Then, by applying the Kelvin transform,  
This and the Kelvin transform allow us to classify the local behavior at infinity for every solution of \eqref{eq1} as follows.  
%More precisely, our result reads as follows. 

\begin{theorem} 
	[Classification of the behavior at $\infty$] \label{mar2} 
	Suppose that $u$ is an arbitrary solution of \eqref{eq1} with $\Omega=\Omega_\infty$.
	\begin{description}	
		\item[$\bullet$]	In Case $({\mathcal U})$ and Case $(\mathcal M_2)$, we have \eqref{kop2}.			 
		\item[$\bullet$] In Case $(\mathcal M_1)$, exactly one of the following behaviors occurs:
		\begin{enumerate}
			\item[${\bf (D)}$] $\lim_{|x|\to \infty} |x|^{p_+}\,u(x)\in (0,\infty)$;
			\item[${\bf (E)}$] There exists $\gamma\in (0,\infty)$ such that $\lim_{|x|\to \infty}   |x|^{N-2}\,u(x)/\Phi_\lambda^+(1/|x|)=\gamma$;
			%	$\lim_{|x|\to \infty} |x|^{p_-}\,u(x)=\gamma$;
			\item[${\bf (F)}$] $u$ satisfies \eqref{kop2}.  
		\end{enumerate}		
		\item[$\bullet$] In Case $(\mathcal N)$, we distinguish three situations:
		\begin{enumerate}
			\item If $\theta_-\leq \theta<\theta_+$, then 
			\begin{equation} \label{gem} \lim_{|x|\to \infty} |x|^{p_+}u(x)\in (0,\infty);\end{equation}  
			\item If $\theta=\theta_+$ and $\lambda<\lambda_H$, then $u$ satisfies
			\begin{equation} \label{gem2} \lim_{|x|\to \infty} |x|^{p_+}\left(\log |x|\right)^{\frac{1}{q-1}} \,u(x)=\left(\frac{N-2-2p_-}{q-1}\right)^{\frac{1}{q-1}};
			\end{equation}
			\item If $\theta=\theta_+$ and $\lambda=\lambda_H$, then
			\begin{equation} \label{gem3}  
			\lim_{|x|\to \infty} |x|^{\frac{N-2}{2}}\left (\log |x|\right )^{\frac{2}{q-1}}u(x)=\left [\frac{2\left(q+1\right)}{(q-1)^2}\right ]^{\frac{1}{q-1}}.
			\end{equation}
		\end{enumerate} 		
	\end{description}		 
\end{theorem}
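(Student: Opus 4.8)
The plan is to reduce every statement about the behavior at infinity to an already-established statement about the behavior near zero, by means of the Kelvin transform. Given a solution $u$ of \eqref{eq1} with $\Omega=\Omega_\infty$, set $\widehat u(x):=|x|^{2-N}u(x/|x|^2)$ for $x$ in a punctured neighborhood of the origin. A direct computation (the same one underlying the definition of $U_{1,\theta}$) shows that $\widehat u$ solves \eqref{eq1} with $\theta$ replaced by $\widehat\theta=(N-2)q-(N+2+\theta)$, on the image $\{x/|x|^2:\ x\in\Omega_\infty\}$, which is an open set containing a punctured ball around zero; hence $\widehat u$ is in the scope of Theorems~\ref{mar} and \ref{th-cd} (using the remark that removes the restriction $\theta>-2$). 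The key algebraic fact is $\widehat\Theta=N-2-\Theta$, where $\widehat\Theta:=(\widehat\theta+2)/(q-1)$; since $p_++p_-=N-2$, this exhibits $\widehat\Theta$ as the reflection of $\Theta$ about $(N-2)/2$. Consequently $\ell(\widehat\theta)=\ell(\theta)$, and the cases transform as $(\mathcal U)\mapsto(\mathcal U)$, $(\mathcal M_1)\leftrightarrow(\mathcal M_2)$, and $(\mathcal N)\mapsto(\mathcal N)$. Within Case $(\mathcal N)$ one checks the sub-case mapping: $\theta_-\le\theta<\theta_+$ (forcing $\lambda<\lambda_H$) gives $\theta_-<\widehat\theta\le\theta_+$, so $\widehat u$ falls under $(\mathcal N_1)$; $\theta=\theta_+$ with $\lambda<\lambda_H$ gives $\widehat\theta=\theta_-<\theta_+$, i.e. $(\mathcal N_2)$; and $\theta=\theta_+$ with $\lambda=\lambda_H$ gives $\widehat\theta=\theta_-=\theta_+$, i.e. $(\mathcal N_3)$.

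The second ingredient is a dictionary translating a near-zero profile of $\widehat u$ into an at-infinity profile of $u$. Writing $y=x/|x|^2$ (so $|x|=1/|y|$ and $\log(1/|x|)=\log|y|$), the relation $u(y)=|y|^{2-N}\widehat u(x)$ turns $\widehat u(x)\sim c\,|x|^{-\beta}(\log(1/|x|))^{-s}$ as $|x|\to 0$ into $u(y)\sim c\,|y|^{-(N-2-\beta)}(\log|y|)^{-s}$ as $|y|\to\infty$. The exponent map $\beta\mapsto N-2-\beta$ is the same reflection as above: $p_-\mapsto p_+$, $\widehat\Theta\mapsto\Theta$, and $(N-2)/2\mapsto(N-2)/2$. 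In particular, the function $\widehat U_0=\ell^{1/(q-1)}|x|^{-\widehat\Theta}$ (the analogue of \eqref{node} for $\widehat\theta$) translates to $U_0(y)$, because $\ell(\widehat\theta)=\ell(\theta)$ and $N-2-\widehat\Theta=\Theta$. The logarithmic factors are preserved verbatim, and the constant $N-2-2p_-=p_+-p_-=2\sqrt{\lambda_H-\lambda}$ appearing in \eqref{110}--\eqref{111} depends only on $\lambda$, hence is unchanged.

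With this dictionary the theorem follows case by case. In Case $(\mathcal U)$, $\widehat u$ is again in Case $(\mathcal U)$, so Theorem~\ref{mar} gives $\widehat u/\widehat U_0\to 1$ near zero, which translates to \eqref{kop2}. In Case $(\mathcal M_2)$, $\widehat u$ lies in Case $(\mathcal M_1)$, and Theorem~\ref{mar} again yields $\widehat u/\widehat U_0\to 1$, i.e. \eqref{kop2}. In Case $(\mathcal M_1)$, $\widehat u$ lies in Case $(\mathcal M_2)$ and Theorem~\ref{th-cd}(ii) supplies the trichotomy: profile ${\bf (A)}$, namely $|x|^{p_-}\widehat u\to c\in(0,\infty)$, becomes $|y|^{p_+}u\to c$, i.e. ${\bf (D)}$; profile ${\bf (B)}$, namely $\widehat u/\Phi_\lambda^+\to\gamma$, becomes ${\bf (E)}$, since $\Phi_\lambda^+$ is radial so that $\Phi_\lambda^+(y/|y|^2)=\Phi_\lambda^+(1/|y|)$ and hence $|y|^{N-2}u(y)/\Phi_\lambda^+(1/|y|)\to\gamma$ as in \eqref{e}; and profile ${\bf (C)}$ becomes \eqref{kop2}, i.e. ${\bf (F)}$. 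Finally, in Case $(\mathcal N)$ we apply Theorem~\ref{th-cd}(iii) to $\widehat u$ via the sub-case mapping above: $(\mathcal N_1)$ for $\widehat u$ (i.e. $\lim_{|x|\to 0}|x|^{p_-}\widehat u\in(0,\infty)$) gives \eqref{gem}; $(\mathcal N_2)$ gives \eqref{gem2}; and $(\mathcal N_3)$ gives \eqref{gem3}, the constants matching automatically by the preceding paragraph.

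Since the behavior near zero is already completely classified, there is no genuine analytic obstacle: the whole content is the change of variables. The one point demanding care is the exponent bookkeeping---verifying $\widehat\Theta=N-2-\Theta$, the induced reflection of each case and sub-case, and the invariance of the logarithmic constants---so that profile ${\bf (B)}$ maps \emph{exactly} onto the normalization \eqref{e} defining ${\bf (E)}$, and the constants in \eqref{gem2}--\eqref{gem3} coincide with those in \eqref{110}--\eqref{111}.
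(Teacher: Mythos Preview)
Your proof is correct and follows exactly the approach the paper itself uses: apply the Kelvin transform to reduce the classification at infinity to the already-established classification near zero (Theorems~\ref{mar} and \ref{th-cd}), using the identities $\widehat\Theta=N-2-\Theta$, $\ell(\widehat\theta)=\ell(\theta)$, and $\widehat\theta=\theta_++\theta_--\theta$ to track the case and sub-case correspondences. Your explicit dictionary and sub-case bookkeeping (in particular the reflection $p_-\leftrightarrow p_+$ and the verification that the constants in \eqref{110}--\eqref{111} carry over unchanged to \eqref{gem2}--\eqref{gem3}) is more detailed than what the paper spells out, but the content is the same.
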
	

The existence of all the profiles at infinity prescribed by Theorem~\ref{mar2} follows from Theorem~\ref{est} and the Kelvin transform. 

\vspace{0.2cm}
{\bf The Kelvin transform.} 
%\label{subsec1}
Let $\Omega=\mathbb R^N$ with $N\geq 3$. For a solution $u$ of \eqref{eq1}, let $u_*$
be its Kelvin transform with respect to the unit sphere in $\mathbb R^N$: 
\begin{equation} \label{sum} u_*(x):=|x|^{2-N}\,u(x/|x|^2)\quad \mbox{for }x\in \mathbb R^N\setminus \{0\}.  
\end{equation}
Since $\Delta u_*(x)=|x|^{-N-2} (\Delta u)\,(x/|x|^2)$, we obtain that $u_*$ satisfies an equation of the same type as $u$, where $\theta$ is replaced by $\widehat{\theta}:=\left(N-2\right) q-(N+2+\theta)$. In other words, we have 
\begin{equation} \label{ahu} 
-\Delta u_*-\frac{\lambda}{|x|^2}u_*+|x|^{\widehat{\theta}}\, u_*^q=0\quad \mbox{in }\mathbb R^N\setminus \{0\}.  
\end{equation}
The behavior of $u_*$ near zero (respectively, at infinity) is obtained from the behavior of $u$ at infinitely (respectively, near zero) by using \eqref{sum}. 
%In view of \eqref{lig},  
For such conversions, it is useful to keep in mind that 
\begin{equation} \label{lig} \widehat{\Theta}:=\frac{ \widehat{\theta}+2}{q-1}=N-2-\Theta
\quad\mbox{and}\quad \widehat{\ell}:=\widehat{\Theta}^2 -(N-2)\,\widehat{\Theta}+\lambda=\ell. 
\end{equation} 
We see that if $\widehat{\theta}\leq -2$, then $\theta>-2$ 
and similarly, $\theta\leq -2$ implies that $\widehat{\theta}>-2$ since $q>1$. 
%In Case~$(\mathcal U)$, for a positive solution $u$ of \eqref{eq1} in $\mathbb R^N\setminus \{0\}$ with $\theta>-2$ (respectively, $\theta\leq -2$), its Kelvin transform $u_*$ satisfies \eqref{ahu} with $\widehat{\theta}\leq -2$ (respectively, $\widehat{\theta}>-2$) (which is still in Case~$(\mathcal U)$). 
In addition, if $\lambda\leq \lambda_H$, we have $\widehat{\theta}=\theta_++\theta_--\theta$ using $\theta_\pm$ in \eqref{tat}; thus, $\widehat{\theta}<\theta_-$ is equivalent to $\theta>\theta_+$, whereas $\widehat{\theta}>\theta_+$ if and only if $\theta<\theta_-$. 

What this means is that in Case $(\mathcal U)$ (Case $(\mathcal M_1)$ and Case $(\mathcal M_2)$, respectively)
if $u$ is a solution of \eqref{eq1}, then its behavior at zero leads (through its Kelvin transform $u_*$) to knowledge of the behavior at infinity for some other solution of \eqref{eq1} in Case $(\mathcal U)$ (Case $(\mathcal M_2)$ and Case $(\mathcal M_1)$, respectively).
Then, Theorem~\ref{mar} implies in 
Case $(\mathcal U)$ and Case $(\mathcal M_2)$ 
that every solution $u$ of \eqref{eq1} satisfies a unique behavior at infinity given by \eqref{kop2} using the Kelvin transform in \eqref{sum} and \eqref{lig}. 

%\subsection{Local behavior at infinity}

\section{Construction of explicit ``rough" sub-solutions}
\label{rou}

In Lemma~\ref{con1} we proceed with the explicit construction and verification of the ``rough" sub-solutions $w_\delta$ for \eqref{eq1} on exterior domains in Cases $(\mathcal U)$ and $(\mathcal M_1)$. This will find application in the proof of Theorem~\ref{mar} via Corollary~\ref{fum}. In addition, through the Kelvin transform, we immediately acquire corresponding sub-solutions $z_\delta$ of \eqref{eq1} for $0<|x|\leq \delta$ in Cases $(\mathcal U)$ and $(\mathcal M_2)$, which will be used in the proof of Theorem~\ref{est}. 

\begin{lemma} \label{con1} 
	In Case $(\mathcal U)$ and Case $(\mathcal M_1)$, for every $\alpha>0$ small, depending on $N$, $q$, $\theta$ and $\lambda$, there exists $c_\alpha>0$ such that for every constant $c\in (0,c_\alpha)$ and all $\delta>0$, the function $w_\delta$ given by  
	\begin{equation} \label{wd} w_\delta(x):=c\,U_0(x)\left[1-\left(\frac{\delta}{|x|}\right)^\alpha \right]^{\frac{1}{\sqrt{\alpha}}} \quad \mbox{for every } |x|\geq \delta
	\end{equation} satisfies the following inequality
	\begin{equation}\label{da}
	-\mathbb L_\lambda(w_\delta)+|x|^\theta\,(w_\delta)^q
	\leq 0\quad\mbox{for every } |x|>\delta.
	\end{equation}
\end{lemma}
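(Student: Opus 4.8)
The plan is to exploit that $U_0$ is an \emph{exact} solution and treat $w_\delta$ as a multiplicative perturbation of it. Since $w_\delta$ is radial, write $r=|x|$ and $w_\delta=\phi\,g$ with $\phi:=c\,U_0$ and $g(r):=\big[1-(\delta/r)^\alpha\big]^{\beta}$, where $\beta:=1/\sqrt{\alpha}$; note $g$ is smooth and positive on $(\delta,\infty)$. First I would record that $U_0=\ell^{1/(q-1)}r^{-\Theta}$ satisfies $-\Delta U_0-\lambda r^{-2}U_0=-\ell\,r^{-2}U_0=-r^\theta U_0^q$, which is immediate from $\Theta^2-(N-2)\Theta+\lambda=\ell$ together with $\theta-q\Theta=-\Theta-2$. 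Applying the product rule $\Delta(\phi g)=(\Delta\phi)g+2\phi'g'+\phi\big(g''+\tfrac{N-1}{r}g'\big)$ and dividing by $\phi>0$, the zeroth-order (in $g$) contribution collapses to $-\ell r^{-2}g$, so that, using $\phi'/\phi=-\Theta/r$ and $r^\theta\phi^q=c^{q-1}\ell\,r^{-2}\phi$,
\begin{equation*}
\frac{-\mathbb L_\lambda w_\delta+r^\theta w_\delta^q}{\phi}=-g''+\frac{2\Theta-N+1}{r}\,g'-\ell\,r^{-2}g+c^{q-1}\ell\,r^{-2}g^q.
\end{equation*}
Multiplying by $r^2$ reduces \eqref{da} to showing that the resulting expression is nonpositive.

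Next I would compute $g',g''$ through the substitution $t:=(\delta/r)^\alpha\in(0,1)$, using $dt/dr=-\alpha t/r$; one finds $rg'=\alpha\beta\,t\,(1-t)^{\beta-1}$ and $-r^2g''=\alpha\beta\,t\,(1-t)^{\beta-2}\big[(1-t)+\alpha(1-\beta t)\big]$. Substituting and factoring out the positive quantity $(1-t)^{\beta-2}$, the inequality \eqref{da} becomes $P(t)\le0$ on $(0,1)$, where after simplification with $\alpha\beta=\sqrt\alpha$ and $\alpha\beta^2=1$,
\begin{equation*}
P(t)=\sqrt\alpha\,K\,t(1-t)+\alpha\,t(\sqrt\alpha-t)-\ell(1-t)^2+c^{q-1}\ell\,(1-t)^{\beta(q-1)+2},\qquad K:=2\Theta-N+2.
\end{equation*}

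I would then bound $P$ by a concave quadratic. Since $1-t\in(0,1)$ and $\beta(q-1)\ge0$, the last term is $\le c^{q-1}\ell(1-t)^2$, so the two $\ell$-terms combine into $-\ell(1-c^{q-1})(1-t)^2$; moreover $\sqrt\alpha K\,t(1-t)\le\sqrt\alpha K_+\,t(1-t)$ with $K_+:=\max\{K,0\}$. Writing $A:=\ell(1-c^{q-1})>0$ (valid for $c<1$), this gives $P(t)\le R(t)$ with $R(t)=\sqrt\alpha K_+\,t(1-t)+\alpha^{3/2}t-\alpha t^2-A(1-t)^2$, a quadratic with negative leading coefficient $-(\sqrt\alpha K_++\alpha+A)$. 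Completing the square shows its global maximum is $\le0$ exactly when $(K_++\alpha)^2\le 4A(1-\sqrt\alpha)=4\ell(1-c^{q-1})(1-\sqrt\alpha)$. The decisive input is the algebraic identity $4\ell-K^2=4(\lambda-\lambda_H)$: in Case $(\mathcal U)$ this yields $K^2<4\ell$ outright, while in Case $(\mathcal M_1)$ one has $\Theta<p_-\le(N-2)/2$, hence $K=2(\Theta-\tfrac{N-2}{2})<0$ and $K_+=0$; in either situation $K_+^2<4\ell$. Therefore, fixing $\alpha>0$ small (depending only on $N,q,\theta,\lambda$) and then $c_\alpha\in(0,1)$ small enough that $(K_++\alpha)^2\le4\ell(1-c^{q-1})(1-\sqrt\alpha)$ for all $c\in(0,c_\alpha)$, we get $R\le0$, hence $P\le0$ on $(0,1)$, which is the claim.

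I expect the main obstacle to be the middle term $\sqrt\alpha K\,t(1-t)$ in the regime $K>0$ (possible in Case $(\mathcal U)$): it has the same order $O(\sqrt\alpha)$ as the gain extracted near $t=1$, so any crude estimate that discards the helpful term $-\alpha t^2$ produces a strictly positive upper bound and fails. Retaining $-\alpha t^2$ and controlling the discriminant of $R$ is essential, and the sharp threshold is precisely $K^2<4\ell$, i.e.\ $\lambda>\lambda_H$; this is exactly why the construction is confined to Cases $(\mathcal U)$ and $(\mathcal M_1)$ and why it must break down in the non-existence regime.
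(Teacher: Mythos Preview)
Your proof is correct and follows essentially the same route as the paper: both reduce \eqref{da} via $t=(\delta/|x|)^\alpha$ to a quadratic-in-$t$ inequality, use $(1-t)^{\beta(q-1)}\le 1$ to strip the transcendental factor, and then control the resulting quadratic for small $\alpha$. The only cosmetic difference is that the paper treats the quadratic $A_\alpha t^2+B_\alpha t+1$ case-by-case (negative discriminant in Case~$(\mathcal U)$, both roots $>1$ in Case~$(\mathcal M_1)$), whereas you unify both cases by passing to $K_+=\max\{K,0\}$ and checking a single discriminant condition $(K_++\alpha)^2\le 4\ell(1-c^{q-1})(1-\sqrt{\alpha})$; the key identity $4\ell-K^2=4(\lambda-\lambda_H)$ you isolate is exactly the content of the paper's discriminant computation.
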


\begin{proof}
	Assume Case $(\mathcal U)$ or Case $(\mathcal M_1)$. 
	Recall that $\ell>0$. 
	For every $\alpha>0$, we use the notation
	\begin{equation} \label{abali}
	\begin{aligned}
	& A_\alpha:=1-\frac{\sqrt{\alpha}}{\ell}\left(N-2-2\,\Theta -\sqrt{\alpha}\right) \\ 
	& B_\alpha:=-2+\frac{\sqrt{\alpha}}{\ell}\left(N-2-2\,\Theta-\alpha\right).	
	\end{aligned}
	\end{equation}
	Using the definition of $w_\delta$ in \eqref{wd}, for every $|x|>\delta$, we obtain that 
	$$ \mathbb{L}_\lambda(w_\delta)=c\,\ell^\frac{q}{q-1}|x|^{-\Theta-2}\left[1-\left(\frac{\delta}{|x|}\right)^\alpha \right]^{\frac{1}{\sqrt{\alpha}}-2}\left[A_\alpha 
	\left(\frac{\delta}{|x|}\right)^{2\alpha}+B_\alpha \left(\frac{\delta}{|x|}\right)^\alpha+1\right]$$
	as well as
	$$ |x|^\theta (w_\delta(x))^q=c^q\,\ell^{\frac{q}{q-1}}|x|^{-\Theta-2}\left[1-
	\left(\frac{\delta}{|x|}\right)^\alpha \right]^{\frac{q}{\sqrt{\alpha}}}.
	$$
	For every $t\in (0,1)$, we define 
	$$ h_\alpha(t):=
	\left(1-t\right)^{-\frac{1}{\sqrt{\alpha}}(q-1)-2}\left(A_\alpha\, t^{2}+
	B_\alpha\, t+1\right). 
	$$
	If $t=(\delta/|x|)^\alpha$, then the inequality in \eqref{da} is equivalent to 
	\begin{equation}\label{retin}
	h_\alpha(t)\geq c^{q-1}\quad\text{for every }t\in (0,1).
	\end{equation}
	We prove below that we can choose $\alpha\in (0,1)$ small, depending only on $N,q,\theta$ and $\lambda$, such that 	
	\begin{equation}\label{mnb}
	\inf_{t\in (0,1)}h_\alpha(t)>0.
	%=g_\alpha(0)=\ell.
	\end{equation} 
	%We now prove \eqref{mnb}. 
	Let $\alpha\in (0,1)$ be small such that $A_\alpha>0$. 
	Observe that  
	\begin{equation} \label{fior}  A_\alpha+B_\alpha+1= \frac{\alpha}{\ell}\left(1-\sqrt{\alpha}\right)>0.  
	\end{equation}
	By a simple computation, using \eqref{abali}, we find that
	$$
	B_\alpha^2-4A_\alpha=\frac{4\,\alpha}{\ell^2}\left \{\lambda_H-\lambda+\ell\sqrt{\alpha} +\frac{\alpha}{4}\left[\alpha-2\left(N-2-2\,\Theta\right)\right]\right \}.
	$$
	To prove \eqref{mnb}, we analyze Case $(\mathcal U)$ separately from Case $(\mathcal M_1)$. 
	\begin{enumerate}
		\item[$(\mathcal U)$] Let $\lambda>\lambda_H$ and $\theta\in \mathbb R$. Then, we have $B_\alpha^2-4A_\alpha<0$ by choosing $\alpha>0$ small enough. Hence, $A_\alpha \,t^2+B_\alpha\, t+1>0$ for every $t\in \mathbb R$. %This proves \eqref{fig}.
		
		\item[$(\mathcal M_1)$] Let $\lambda\leq \lambda_H$ and $\theta<\theta_-$. Then for $\alpha>0$ small enough,  we have that $B_\alpha^2-4A_\alpha > 0$ and, hence, the quadratic equation $A_\alpha \,t^2+B_\alpha \,t+1=0$ has two distinct roots,  
		say $t_1(\alpha)$ and $t_2(\alpha)$. 
		Since $\theta<\theta_-$ is equivalent to  $\Theta<p_-$, from  $p_-\leq (N-2)/2$, we obtain that $N-2-2\,\Theta>0$. Hence, we have  
		$t_1(\alpha)\,t_2(\alpha)=1/A_\alpha>1$ for $\alpha\in (0,1)$ small enough. 
		Then, both roots $t_1(\alpha)$ and $t_2(\alpha)$ are greater than $1$ in view of \eqref{fior}. 
		Hence, we have 
		\begin{equation} \label{fig} A_\alpha\,t^2+B_\alpha \,t+1>0\quad \mbox{for every } t\in [0,1].\end{equation} 
	\end{enumerate}
	Since \eqref{fig} holds for Case $(\mathcal U)$ and Case $(\mathcal M_1)$, using that $(1-t)^{-\frac{1}{\sqrt{\alpha}}(q-1)-2}\geq 1$ for every $t\in [0,1)$, we deduce \eqref{mnb}.
	
	\vspace{0.2cm}
	We define $c_\alpha=\left(\inf_{t\in (0,1)}h_\alpha(t)\right)^{1/(q-1)}>0$. Then, for every $0<c<c_\alpha$, we obtain \eqref{retin}.  
	This ends the proof of Lemma~\ref{con1}. 
\end{proof}

\begin{corollary} \label{fum}
	Let $\Omega=\Omega_0$. 
	In Case $(\mathcal U)$ and Case $(\mathcal M_1)$,     
	every solution $u$ of problem \eqref{eq1} satisfies 
	\begin{equation} \label{limi} \liminf_{|x|\to 0} \frac{u(x)}{U_0(x)}>0. \end{equation} 
\end{corollary}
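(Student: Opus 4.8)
The plan is to combine the explicit rough sub-solutions $w_\delta$ from Lemma~\ref{con1} with the comparison principle of Lemma~\ref{compa} on a fixed annulus, and then let the inner radius $\delta$ shrink to zero. The decisive observation is that a single multiplicative constant $c$ can be chosen \emph{independently of $\delta$} so that $w_\delta$ stays below $u$ on the entire boundary of the annulus; this is what permits passing to the limit.

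First I would fix $r_0>0$ with $\overline{B_{r_0}(0)}\subset \Omega_0$. Since $u\in C^1(\Omega_0\setminus\{0\})$ is positive, the number $m:=\min_{|x|=r_0}u(x)$ is strictly positive. I then fix $\alpha>0$ small as in Lemma~\ref{con1} and pick a constant $c$ with $0<c<c_\alpha$ that is moreover small enough that $c\,U_0(r_0)\leq m$; here I use that $U_0$ is radial, so $U_0$ is constant on $\partial B_{r_0}(0)$. Note that both $c_\alpha$ and this bound are independent of $\delta$.

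Next, for each $\delta\in(0,r_0)$ I would compare $u$ with $w_\delta$ on the annulus $A_\delta:=\{x:\delta<|x|<r_0\}$. On the inner boundary $\partial B_\delta(0)$ we have $w_\delta=0<u$ since $u>0$, while on the outer boundary $\partial B_{r_0}(0)$, using that the bracket factor in \eqref{wd} satisfies $[1-(\delta/|x|)^\alpha]^{1/\sqrt{\alpha}}\leq 1$, we get $w_\delta\leq c\,U_0(r_0)\leq m\leq u$. By Lemma~\ref{con1}, $w_\delta$ is a sub-solution of \eqref{eq1} in $A_\delta$, whereas $u$, being a solution, is in particular a super-solution; hence the comparison principle of Lemma~\ref{compa} gives $u\geq w_\delta$ throughout $A_\delta$. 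Finally, fixing any $x$ with $0<|x|<r_0$ and letting $\delta\to 0^+$ (so that $[1-(\delta/|x|)^\alpha]^{1/\sqrt{\alpha}}\to 1$), I obtain $u(x)\geq c\,U_0(x)$ for all $0<|x|<r_0$, whence $\liminf_{|x|\to 0}u(x)/U_0(x)\geq c>0$, which is exactly \eqref{limi}.

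The main obstacle I anticipate is the clean invocation of the comparison principle: I must ensure Lemma~\ref{compa} is stated for a sub-solution versus a solution on the bounded annulus $A_\delta$ in the relevant (distributional) sense, and that the boundary inequality genuinely holds on \emph{both} boundary components uniformly in $\delta$. The uniformity is precisely what the $\delta$-independence of $c_\alpha$ and of the bound $c\,U_0(r_0)\leq m$ supplies, and it is what legitimizes the limit $\delta\to 0^+$. I would emphasize that $c$ here is deliberately not optimal—the construction yields only the positive lower multiple $c\,U_0$, which suffices for \eqref{limi}; upgrading this to the sharp limit $1$ is deferred to Proposition~\ref{lim-o}.
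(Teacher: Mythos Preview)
Your proof is correct and follows essentially the same approach as the paper: fix $r_0$, choose $c<\min\{c_\alpha,\min_{|x|=r_0}u(x)/U_0(r_0)\}$ independently of $\delta$, apply Lemma~\ref{compa} on the annulus $\{\delta<|x|<r_0\}$ using the boundary inequalities $w_\delta=0<u$ on $\partial B_\delta(0)$ and $w_\delta\le c\,U_0\le u$ on $\partial B_{r_0}(0)$, then let $\delta\to 0^+$. Your remark that $c$ is not optimal and that the sharp limit is deferred to Proposition~\ref{lim-o} matches the paper's presentation exactly.
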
 

\begin{proof}	
	Let $r_0\in (0,1)$ be such that 
	$\overline{B_{r_0}(0)}\subset \Omega$. 
	Choose $\alpha\in (0,1)$ as in Lemma~\ref{con1}, according to which there exists $c_\alpha>0$ such that for every $c\in (0,c_\alpha)$ and all $\delta>0$, the function $w_\delta$ in \eqref{wd} satisfies
	\eqref{da}. 
	Let $\delta\in (0,r_0)$ be arbitrary.  
	Choose $0<c<\min\{c_\alpha, \min_{x\in \partial B_{r_0}(0)} (u(x) /U_0(x))\}$. 
	Clearly, $ w_\delta=0<u$ on $ \partial B_\delta(0)$. 
	Our choice of $c$ gives that $ u\geq w_\delta$ on $\partial B_{r_0}(0)$.  
	We now apply the comparison principle in Lemma \ref{compa} to obtain that 
	\begin{equation}\label{fd}
	u(x) \geq 	w_\delta(x)\quad\mbox{for every } \delta\leq |x|\leq r_0. 
	\end{equation}
	For any $x\in B_{r_0}(0)\setminus \{0\}$, by letting $\delta\rightarrow 0$ in \eqref{fd}, we deduce that
	$$
	u(x)\geq c\,U_0(x)  \quad \mbox{for every } x\in B_{r_0}(0)\setminus \{0\},
	$$
	which finishes the proof of \eqref{limi}. \end{proof}
%This completes the proof of Theorem~\ref{mar}. 

Our next result is important in the proof of Theorem~\ref{est} to treat Case~($\mathcal U$) with $h\equiv 0$ in \eqref{boun} (see Lemma~\ref{nonz}) and to analyze Case $(\mathcal M_2)$ in Lemma~\ref{m2c}.  

\begin{lemma} \label{con2} 
	In Case $(\mathcal U)$ and Case $(\mathcal M_2)$ for every $\alpha>0$ small, depending on $N$, $q$, $\theta$ and $\lambda$, there exists $c_\alpha\in (0,1)$ such that for any $c\in (0,c_\alpha)$ and all $\delta>0$, the function $z_\delta$ given by  
	\begin{equation} \label{zde} z_\delta(x):=c\,U_0(x)\left[1-
	\left(\frac{|x|}{\delta}\right)^\alpha \right]^{\frac{1}{\sqrt{\alpha}}} \quad \mbox{for every } 0<|x|\leq \delta
	\end{equation} satisfies the following inequality
	\begin{equation}\label{da2}
	-\mathbb L_\lambda(z_\delta)+|x|^\theta\,(z_\delta)^q
	\leq 0\quad\mbox{for every } 0<|x|<\delta.
	\end{equation}
\end{lemma}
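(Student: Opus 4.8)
The plan is to obtain $z_\delta$ directly as the Kelvin transform of the ``rough'' sub-solution from Lemma~\ref{con1}, applied to the equation carrying the conjugate weight $\widehat\theta=(N-2)q-(N+2+\theta)$, exactly as flagged in Section~\ref{rou}. First I would record the case correspondence under the Kelvin transform: by the relations in \eqref{lig} and the remarks following them, $\lambda$ is unchanged while $\theta>\theta_+$ is equivalent to $\widehat\theta<\theta_-$. Hence, if $\theta$ lies in Case $(\mathcal U)$ (respectively Case $(\mathcal M_2)$), then $\widehat\theta$ lies in Case $(\mathcal U)$ (respectively Case $(\mathcal M_1)$), which is precisely the range covered by Lemma~\ref{con1}. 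I would then fix $\alpha>0$ small and take the constant from Lemma~\ref{con1} (built from the data $N,q,\widehat\theta,\lambda$, equivalently $N,q,\theta,\lambda$), shrinking it if necessary so that $c_\alpha\in(0,1)$; crucially, that constant is independent of the radius and hence of $\delta$.

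Next I would apply Lemma~\ref{con1} to the $\widehat\theta$-equation with radius $R=1/\delta$, producing the sub-solution
\[
\widetilde w_{1/\delta}(y)=c\,\widehat U_0(y)\left[1-\left(\frac{R}{|y|}\right)^\alpha\right]^{\frac{1}{\sqrt{\alpha}}},\qquad |y|\geq 1/\delta,
\]
where $\widehat U_0(y)=\ell^{1/(q-1)}|y|^{-(N-2-\Theta)}$ is the $U_0$ of \eqref{node} for the $\widehat\theta$-equation, using $\widehat\Theta=N-2-\Theta$ and $\widehat\ell=\ell$ from \eqref{lig}. The heart of the argument is then a direct computation of its Kelvin transform. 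Using $|x/|x|^2|=1/|x|$ and $R=1/\delta$, so that $R/|x/|x|^2|=R|x|=|x|/\delta$, I would check the exact cancellation of exponents
\[
(\widetilde w_{1/\delta})_*(x)=|x|^{2-N}\,\widetilde w_{1/\delta}(x/|x|^2)=c\,\ell^{1/(q-1)}|x|^{-\Theta}\left[1-\left(\frac{|x|}{\delta}\right)^\alpha\right]^{\frac{1}{\sqrt{\alpha}}}=z_\delta(x)
\]
for $0<|x|\leq\delta$: the factor $|x|^{N-2-\Theta}$ coming from $\widehat U_0(x/|x|^2)$ cancels $|x|^{2-N}$ to leave the exponent $-\Theta$. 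Thus $z_\delta$ is literally the Kelvin transform of $\widetilde w_{1/\delta}$, not merely a scalar multiple of it.

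Finally, I would invoke that the Kelvin transform sends a sub-solution of the $\widehat\theta$-equation to a sub-solution of the $\theta$-equation, which is the sub-solution analogue of \eqref{ahu}. From $\Delta u_*(x)=|x|^{-N-2}(\Delta u)(x/|x|^2)$ and the identity $\theta+(2-N)q=-N-2-\widehat\theta$, one gets
\[
-\mathbb L_\lambda(u_*)(x)+|x|^{\theta}(u_*)^q(x)=|x|^{-N-2}\left[-\mathbb L_\lambda(u)(y)+|y|^{\widehat\theta}\,u^q(y)\right],\qquad y=x/|x|^2,
\]
and since the positive prefactor $|x|^{-N-2}$ preserves the sign, the inequality $\leq 0$ for $\widetilde w_{1/\delta}$ furnished by Lemma~\ref{con1} transfers to $z_\delta$, yielding \eqref{da2}. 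Because $\widehat{\widehat\theta}=\theta$, no loss occurs in this transfer.

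I do not expect a genuine obstacle here, since the argument is essentially structural once Lemma~\ref{con1} is in hand; the only points requiring care are bookkeeping ones: matching the radius $R=1/\delta$, confirming that $c_\alpha$ from Lemma~\ref{con1} is independent of $\delta$, and verifying the precise cancellation of powers of $|x|$ so that $(\widetilde w_{1/\delta})_*=z_\delta$ holds on the nose rather than up to a constant.
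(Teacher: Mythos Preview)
Your proposal is correct and follows exactly the approach indicated in the paper, whose entire proof reads ``The claim follows from Lemma~\ref{con1} by using the Kelvin transform.'' You have simply supplied the bookkeeping details---the case correspondence $(\mathcal M_2)\leftrightarrow(\mathcal M_1)$, the choice $R=1/\delta$, the exponent cancellation yielding $(\widetilde w_{1/\delta})_*=z_\delta$, and the sign-preserving identity for $-\mathbb L_\lambda+|x|^\theta(\cdot)^q$ under the Kelvin transform---all of which are consistent with the relations \eqref{lig} and \eqref{ahu} recorded in Section~\ref{mainr}.
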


\begin{proof} The claim follows from Lemma~\ref{con1} by using the Kelvin transform.
\end{proof}

\section{Basic ingredients} \label{sect2}

We often use the following comparison principle, which is a consequence of Lemma 2.1 in \cite{CR2004}. %with $g(t)=t^q$ and $q>1$.  

\begin{lemma}[Comparison Principle] \label{compa}
	Let $\lambda\in \mathbb R$, $N\geq 3$ and $\omega$ be a smooth bounded domain in $\mathbb{R}^N$ with $\overline{\omega}\subseteq \mathbb{R}^N\setminus \{0\}$. Let $b\in C^{0,\tau}(\overline{\omega})$ satisfy $b>0$ in $\omega$, where $\tau\in (0,1)$.  Assume that $g$ is a real-valued continuous function on $(0,\infty)$ such that $g(t)/t$ is increasing for $t>0$. 
	
	If $u$ and $v$ are positive $C^1(\omega)$-functions such that
	\begin{equation}\nonumber
	\left\{
	\begin{aligned}
	& -\mathbb L_\lambda (u)+b(x)\,g(u)\leq 0\leq -\mathbb L_\lambda(v)+b(x)\,g(v)\,\,\,\text{in}\,\,\, {\mathcal D}'(\omega),\\
	& \limsup\limits_{{\rm dist}(x,\partial \omega)\to 0}[u(x)-v(x)]\leq 0\,,
	\end{aligned}
	\right.
	\end{equation} 
	then $u\leq v$ in $\omega$. 
\end{lemma}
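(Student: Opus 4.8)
The plan is to give a self-contained argument of Brezis--Oswald / D\'\i az--Sa\'a type that exploits the monotonicity of $t\mapsto g(t)/t$; alternatively, one may simply verify that the hypotheses reduce to those of \cite[Lemma~2.1]{CR2004}. The key preliminary observation is that, since $\overline{\omega}\subseteq \mathbb{R}^N\setminus\{0\}$, there are constants $0<c_1\le c_2$ with $c_1\le |x|\le c_2$ on $\overline{\omega}$; hence the Hardy potential $V(x):=\lambda\,|x|^{-2}$ is bounded and H\"older continuous on $\overline{\omega}$, so $\mathbb{L}_\lambda=\Delta+V$ is uniformly elliptic with bounded zeroth-order coefficient and every integral below is finite over compact subsets of $\omega$. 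Because $u,v\in C^1(\omega)$ are positive, the distributional sub/super-solution inequalities extend by density to nonnegative test functions in $H^1$ with compact support in $\omega$. Arguing by contradiction, I would assume that the open set $\{u>v\}$ is nonempty.

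For $\varepsilon>0$, set $\omega_\varepsilon:=\{x\in\omega:\ u(x)>v(x)+\varepsilon\}$. The boundary hypothesis forces $\overline{\omega_\varepsilon}\subset\omega$ to be compact, so a cutoff of $\zeta:=(u^2-v^2)_+$ supported in $\omega_\varepsilon$ is an admissible test function, and on $\overline{\omega_\varepsilon}$ the functions $u,v$ lie between positive constants. Testing the subsolution inequality for $u$ against $\zeta/u\ge 0$ and the supersolution inequality for $v$ against $\zeta/v\ge 0$ yields, respectively,
\begin{equation*}
\int_\omega \nabla u\cdot\nabla(\zeta/u)\,dx-\int_\omega V\,\zeta\,dx+\int_\omega b\,\frac{g(u)}{u}\,\zeta\,dx\le 0
\end{equation*}
and the reversed inequality with $u$ replaced by $v$. \emph{Subtracting} these two relations, the potential terms $\int_\omega V\,\zeta\,dx$ cancel identically---this is exactly why the bounded Hardy potential is harmless---and one is left with
\begin{equation*}
\int_\omega Q\,dx+\int_\omega b\,\Big(\frac{g(u)}{u}-\frac{g(v)}{v}\Big)\,\zeta\,dx\le 0,\qquad Q:=\nabla u\cdot\nabla(\zeta/u)-\nabla v\cdot\nabla(\zeta/v).
\end{equation*}

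The decisive algebraic fact is the pointwise identity, valid on $\{u>v\}$ where $\zeta=u^2-v^2$,
\begin{equation*}
Q=\Big|\nabla u-\frac{u}{v}\,\nabla v\Big|^2+\Big|\nabla v-\frac{v}{u}\,\nabla u\Big|^2\ge 0,
\end{equation*}
obtained by expanding $\nabla(\zeta/u)$ and $\nabla(\zeta/v)$. Moreover, on $\{u>v\}$ we have $\zeta>0$, $b>0$, and the monotonicity of $t\mapsto g(t)/t$ gives $g(u)/u\ge g(v)/v$, so the second integrand is nonnegative as well. Since both integrals are nonnegative while their sum is $\le 0$, each must vanish; as $b>0$ in $\omega$ and $t\mapsto g(t)/t$ is strictly increasing, the integrand $b\,(g(u)/u-g(v)/v)\,\zeta$ is strictly positive wherever $u>v$, which forces $|\{u>v\}|=0$ and hence $u\le v$. (Rigorously, the truncation first gives $u\le v+\varepsilon$ on $\omega_\varepsilon$, after which one lets $\varepsilon\to 0^+$.)

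I expect the genuinely delicate step to be not the cancellation---which is automatic---but the justification that $\zeta/u$ and $\zeta/v$ are admissible test functions: one must invoke the boundary condition to localize to the compactly contained sets $\omega_\varepsilon$, use the $C^1$-regularity and strict positivity of $u,v$ to control $\nabla(\zeta/u)$ there, and then pass to the limit $\varepsilon\to 0^+$. A secondary subtlety is the precise reading of ``increasing'': the conclusion $u\le v$ genuinely requires \emph{strict} monotonicity of $g(t)/t$ (as holds for the application $g(t)=t^q$ with $q>1$), since for $g\equiv 0$ comparison can fail whenever $-\mathbb{L}_\lambda$ admits a positive solution vanishing on $\partial\omega$.
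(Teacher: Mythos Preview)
The paper does not give a proof at all: it simply records that the lemma ``is a consequence of Lemma~2.1 in \cite{CR2004}'' and moves on. You mention this reduction yourself as an alternative, so in that sense your proposal already contains the paper's approach.

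Your self-contained D\'{\i}az--Sa\'a argument is the standard one and is correct in outline: the Picone-type identity $Q=\big|\nabla u-\tfrac{u}{v}\nabla v\big|^2+\big|\nabla v-\tfrac{v}{u}\nabla u\big|^2\ge 0$ on $\{u>v\}$ is right, the cancellation of the bounded potential term is exactly the point, and the sign of the absorption term follows from the monotonicity of $g(t)/t$. Your caveats are also well placed: the only work is localizing to $\omega_\varepsilon\Subset\omega$ so that $\zeta/u,\zeta/v$ are legitimate $H^1$ test functions (here the boundary hypothesis and $C^1$ positivity do the job), and the conclusion indeed uses that ``increasing'' is read as \emph{strictly} increasing, which is the intended meaning and holds for $g(t)=t^q$, $q>1$, the only case used in the paper. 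So your proposal is correct and strictly more informative than the paper's one-line citation.
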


Our next result is obtained in the same way as Lemma~4.1 in \cite[Chapter 4]{Cmem}, where we take $b(x)=|x|^\theta$ for $x\in \mathbb R^N\setminus \{0\}$ and $h(t)=t^q$ for every $t\in (0,\infty)$ with $q>1$ and $\theta\in \mathbb R$. 
%The results in this section apply to every $\lambda\in \mathbb R$. 

\begin{lemma}[{\em A priori} estimates]\label{lem01}  
	%	Let $N\geq 3$ and $\Omega\subseteq \mathbb R^N$ be an open set containing $0$. 
	Let $r_0>0$ be such that $\overline{B_{2r_0}(0)}\subset \Omega_0$. 
	For every $q>1$ and $\lambda,\theta\in \mathbb R$, there exists a constant $C_0>0$, depending only on 
	$N,q,\lambda$ and $\theta$,  
	such that any sub-solution $u$ of \eqref{eq1} with $\Omega=\Omega_0$ satisfies
	\begin{equation}\label{in11}
	u(x)\leq C_0\, |x|^{-\Theta} 
	\quad\mbox{for all }0<|x|\leq r_0. 
	\end{equation}
\end{lemma}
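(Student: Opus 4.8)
The plan is to prove \eqref{in11} pointwise: I fix $x_0$ with $0<|x_0|\le r_0$ and bound $u(x_0)$ by comparing $u$ with an explicit Osserman-type barrier on the ball $B_\rho(x_0)$, where $\rho:=|x_0|/2$. This ball stays away from the origin (on it $|x|\ge|x_0|/2>0$) and lies in $\Omega_0$ (there $|x|\le 3|x_0|/2<2r_0$ and $\overline{B_{2r_0}(0)}\subset\Omega_0$), so both weights are comparable to fixed powers of $|x_0|$: one has $|x|^\theta\ge m_1|x_0|^\theta$ and $|x|^{-2}\le m_2|x_0|^{-2}$ on $B_\rho(x_0)$, with $m_1,m_2>0$ depending only on $\theta$. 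The scale invariance $u\mapsto\mu^{\Theta}u(\mu\,\cdot)$ of \eqref{eq1} makes it clear in advance why such a comparison must reproduce precisely the exponent $-\Theta$.

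First I freeze the coefficients. Writing the sub-solution inequality as $\Delta u\ge |x|^{\theta}u^q-\lambda|x|^{-2}u$ in $\mathcal D'$, and using $|x|^\theta\ge m_1|x_0|^\theta$ together with $\lambda|x|^{-2}\le \lambda_+m_2|x_0|^{-2}$ where $\lambda_+:=\max\{\lambda,0\}$ (this last bound holds for every sign of $\lambda$), I obtain that $u$ is a sub-solution on $B_\rho(x_0)$ of the constant-coefficient operator $w\mapsto-\Delta w+\tilde g(w)$ with $\tilde g(t):=m_1|x_0|^\theta t^q-\lambda_+m_2|x_0|^{-2}t$. The decisive observation is that $\tilde g(t)/t=m_1|x_0|^\theta t^{q-1}-\lambda_+m_2|x_0|^{-2}$ is increasing on $(0,\infty)$, so Lemma~\ref{compa} applies to this operator (taking $\lambda=0$ and $b\equiv1$ there). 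This is the step that neutralises the troublesome sign of the Hardy term for $\lambda>0$: the frozen potential is absorbed into an admissible nonlinearity rather than confronted directly.

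Next I build the barrier $V(x):=A\,\bigl(\rho^2-|x-x_0|^2\bigr)^{-2/(q-1)}$, which blows up on $\partial B_\rho(x_0)$. The exponent is chosen so that $V^q$ and the most singular part of $\Delta V$ both carry the power $w^{-2/(q-1)-2}$ of $w:=\rho^2-|x-x_0|^2$, the identity $\tfrac{2}{q-1}\cdot q=\tfrac{2}{q-1}+2$ being exactly what produces this cancellation. Estimating $|x-x_0|^2\le\rho^2$ and $w\le\rho^2$ shows that both $\Delta V$ and the linear term $\lambda_+m_2|x_0|^{-2}V$ are dominated by $A\,w^{-2/(q-1)-2}\,|x_0|^2\,K$ with a constant $K=K(N,q,\theta,\lambda)$. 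Hence $-\Delta V+\tilde g(V)\ge0$ as soon as $m_1|x_0|^\theta A^{q-1}\ge|x_0|^2K$, which holds for $A:=(K/m_1)^{1/(q-1)}|x_0|^{(2-\theta)/(q-1)}$. Then $V$ is a super-solution with $V=+\infty\ge u$ on $\partial B_\rho(x_0)$, so Lemma~\ref{compa} gives $u\le V$ on $B_\rho(x_0)$; evaluating at $x_0$ and collecting powers yields $u(x_0)\le A\rho^{-4/(q-1)}=C_0|x_0|^{-\Theta}$ with $C_0=(K/m_1)^{1/(q-1)}2^{4/(q-1)}$, which depends only on $N,q,\theta,\lambda$. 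Since $x_0$ was arbitrary, \eqref{in11} follows.

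The only genuine obstacle is the wrong-signed Hardy term when $\lambda>0$, and the argument is engineered precisely for it: localising to a ball of radius comparable to $|x_0|$ converts $\lambda|x|^{-2}$ into a bounded constant, the monotonicity of $\tilde g(t)/t$ lets the comparison principle swallow this constant, and the super-linearity $q>1$ guarantees that the resulting linear perturbation is of strictly lower order near $\partial B_\rho(x_0)$, where the barrier does its work. For $\lambda\le0$ one has $\lambda_+=0$ and the argument simplifies, since then $\Delta u\ge m_1|x_0|^\theta u^q$ outright and the linear contribution to $K$ disappears.
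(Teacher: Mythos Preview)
Your proof is correct and follows essentially the same route as the paper: both construct the identical Osserman-type barrier $A\bigl(\rho^2-|x-x_0|^2\bigr)^{-2/(q-1)}$ on the ball $B_{|x_0|/2}(x_0)$ and apply Lemma~\ref{compa}, the only cosmetic difference being that the paper keeps the variable weights $|x|^\theta$ and $\lambda|x|^{-2}$ throughout the super-solution verification (inequality~\eqref{jul}) whereas you first freeze them via $m_1,m_2$ and absorb the Hardy term into the nonlinearity $\tilde g$. The paper's super-solution $\mathcal{P}(x)=C_0|x_0|^{-\Theta}\zeta(x)^{-2/(q-1)}$ with $\zeta(x)=1-(2|x-x_0|/|x_0|)^2$ is exactly your $V$ after rewriting $\zeta(x)=(\rho^2-|x-x_0|^2)/\rho^2$.
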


\begin{proof}
	Fix $x_0\in \mathbb{R}^N$ with $0<|x_0|\leq r_0$. 
	For every $x\in B_{|x_0|/2}(x_0)$, we define 
	\begin{equation}\label{pp}
	\mathcal{P}(x):=C_0\,|x_0|^{-\Theta} \left [\zeta(x)\right]^{-\frac{2}{q-1}},\quad\mbox{where } 
	\zeta(x):=1-\left (\frac{2\,|x-x_0|}{|x_0|}\right )^2. 
	\end{equation}
	We claim that in \eqref{pp} we can take a constant $C_0>0$ that is independent of $x_0$ and $r_0$ such that 
	\begin{equation}\label{xz}
	-\mathbb L_\lambda (\mathcal P(x))+|x|^\theta \left(\mathcal P(x)\right)^q\geq 0
	\quad\mbox{for every } x \in B_{|x_0|/2}(x_0). 
	\end{equation}
	Indeed, a simple calculation shows that the inequality in \eqref{xz} is equivalent to
	\begin{equation} \label{jul}
	\frac{|x_0|^\theta}{|x|^\theta}
	\left\{
	\frac{16}{q-1}\left [N\zeta(x)+\frac{8\,(q+1)}{q-1}\,\frac{|x-x_0|^2}{|x_0|^2}\right ]+\lambda\, \frac{|x_0|^2}{|x|^2}\,\zeta^2(x) \right\}\leq C_0^{q-1}
	\end{equation}
	for every $x\in B_{|x_0|/2}(x_0)$. 
	Since $1/2\leq |x|/|x_0|\leq 3/2$ for each $x\in B_{|x_0|/2}(x_0)$, we see that the left-hand side of \eqref{jul} is bounded above by a positive constant depending only on $N,q,\lambda$ and $\theta$.

	Hence, we can find $C_0>0$ such that \eqref{xz} holds. 	
	Let $u$ be any sub-solution of \eqref{eq1} with $\Omega=\Omega_0$. From the definition of $\mathcal P$ in \eqref{pp}, we have $\mathcal P(x)\to \infty$ as ${\rm dist}\,(x,\partial B_{|x_0|/2}(x_0))\to 0$.  
	Then, by Lemma \ref{compa}, we obtain that 
	\begin{equation}
	u(x)\leq \mathcal{P}(x)\quad\mbox{for every } x\in B_{|x_0|/2}(x_0).
	\end{equation}
	In particular, for $x=x_0$ we have $u(x_0)\leq \mathcal P(x_0)=C_0\,|x_0|^{-\Theta}$. Since this inequality holds for every $0<|x_0|\leq r_0$, we conclude the proof of \eqref{in11}. 
\end{proof}

Since the constant $C_0$ in Lemma~\ref{lem01} is independent of the domain, we obtain global {\em a priori} estimates for any positive solution of \eqref{eq1}. 
%Such estimates will play a critical role in the sequel.  

\begin{corollary}[Global {\em a priori} estimates] \label{lem1} Let $\Omega=\mathbb R^N$. For every $q>1$ and $\lambda,\theta\in \mathbb R$, 
	there exists a constant $C_0>0$, depending only on 
	$N$, $q$, $\lambda$ and $\theta$,  
	such that every positive sub-solution $u$ of \eqref{eq1} satisfies
	\begin{equation}\label{in123}
	u(x)\leq C_0\, |x|^{-\Theta} 
	\quad\mbox{for all }x\in \mathbb R^N\setminus \{0\}. 
	\end{equation}
\end{corollary}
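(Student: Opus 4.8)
The statement to prove is Corollary~\ref{lem1}, the global {\em a priori} estimate for positive sub-solutions of \eqref{eq1} on $\Omega=\mathbb R^N$. The key observation driving the plan is that the constant $C_0$ produced in Lemma~\ref{lem01} depends only on $N,q,\lambda$ and $\theta$, and in particular is completely independent of the point $x_0$, the radius $r_0$, and therefore of the domain itself. This uniformity is exactly what converts a purely local estimate near zero into a global one valid on all of $\mathbb R^N\setminus\{0\}$.

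The plan is to revisit the comparison argument of Lemma~\ref{lem01} but now run it at every point of $\mathbb R^N\setminus\{0\}$ rather than only for $0<|x_0|\le r_0$. First I would fix an arbitrary $x_0\in\mathbb R^N\setminus\{0\}$ and form the same barrier $\mathcal P$ as in \eqref{pp}, defined on the ball $B_{|x_0|/2}(x_0)$, whose closure does not meet the origin. Since $1/2\le |x|/|x_0|\le 3/2$ throughout $B_{|x_0|/2}(x_0)$ regardless of the size of $|x_0|$, the reduction to inequality \eqref{jul} and the bound on its left-hand side go through verbatim: the factor $|x_0|^\theta/|x|^\theta$ together with $(|x_0|/|x|)^2$ stays controlled by a constant depending only on $N,q,\lambda,\theta$. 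Hence the {\em same} $C_0$ makes $\mathcal P$ a super-solution, i.e. \eqref{xz} holds on $B_{|x_0|/2}(x_0)$. Because $\mathcal P\to\infty$ at the boundary of this ball while $u$ is a positive sub-solution, the comparison principle in Lemma~\ref{compa} (applied with $b(x)=|x|^\theta>0$ on the relevant annular region and $g(t)=t^q$, which satisfies the monotonicity of $g(t)/t$ for $q>1$) yields $u\le\mathcal P$ on $B_{|x_0|/2}(x_0)$, and evaluating at $x=x_0$ gives $u(x_0)\le C_0|x_0|^{-\Theta}$.

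Since $x_0\in\mathbb R^N\setminus\{0\}$ was arbitrary, this establishes \eqref{in123} for all $x\in\mathbb R^N\setminus\{0\}$ with the single constant $C_0$, completing the proof. I do not anticipate a genuine obstacle here: the entire content is that the estimate in Lemma~\ref{lem01} is already scale-invariant, so the corollary is essentially a matter of observing that nothing in the construction of $\mathcal P$ or in the verification of \eqref{jul} used the restriction $|x_0|\le r_0$ other than to guarantee $B_{|x_0|/2}(x_0)\subset\Omega_0$. With $\Omega=\mathbb R^N$ that containment is automatic for every $x_0\ne 0$. The only point requiring a word of care is confirming that Lemma~\ref{compa} is applicable on $B_{|x_0|/2}(x_0)$, which it is, since this ball is a smooth bounded domain with closure in $\mathbb R^N\setminus\{0\}$.
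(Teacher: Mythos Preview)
Your proposal is correct and follows exactly the approach the paper indicates: the paper simply remarks that since the constant $C_0$ in Lemma~\ref{lem01} is independent of the domain (in particular of $x_0$ and $r_0$), the local estimate immediately globalizes when $\Omega=\mathbb R^N$. Your write-up just spells out this observation in more detail, noting that $\overline{B_{|x_0|/2}(x_0)}\subset\mathbb R^N\setminus\{0\}$ holds automatically for every $x_0\neq 0$, so the barrier construction and comparison of Lemma~\ref{lem01} apply at every such point with the same $C_0$.
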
 

We next state a regularity result from \cite[Lemma 4.9]{Cmem}, proved there in a more general setting. 
We recall that a positive measurable function $\phi$ defined on some interval $(0,A)$ with $A>0$ is called {\em regularly varying at zero with index}  $m\in \mathbb R$, or $\phi\in RV_m(0+)$ in short, provided that  
$$ \lim_{r\to 0^+}\frac{\phi(\xi r)}{\phi(r)}=\xi^m \quad \mbox{for every } \xi>0.$$  
When $m=0$, we say that $\phi$ is {\em slowly varying at zero}. Any positive constant is a slowly varying function at zero. Non-trivial examples of slowly varying functions at zero (defined for $r>0$ small) include 
\begin{enumerate}
	\item[(a)] the logarithm $\log (1/r)$, its iterates $\log_k (1/r)$ (defined as $\log (\log_{k-1}(1/r))$) and powers of
	$\log_k (1/r)$ for every integer $k\geq 1$;
	\item[(b)] $\exp\left(\frac{\log (1/r)}{\log \log (1/r)}\right)$;  
	\item[(c)] $\exp[(-\log r)^\nu]$ for $\nu\in (0,1)$. 
\end{enumerate} 

\begin{lemma}[A regularity result]  \label{regu}
	Let $r_0>0$ be such that $\overline{B_{4r_0}(0)}\subset \Omega_0$. Let $0\leq \delta\leq \Theta$ and $g\in RV_{-\delta}(0+)$ be a positive continuous function on $(0,4r_0)$ such that 
	$\limsup_{|x|\to 0} |x|^\Theta g(r)<\infty $. 
	If $u$ is a solution of \eqref{eq1} with $\Omega=\Omega_0$ such that, for some constant $C_1>0$, we have
	\begin{equation} \label{rafi} 0<u(x)\leq C_1 \,g(|x|)\quad \mbox{for every } 0<|x|< 2 r_0, 
	\end{equation} 
	then there exist constants $C>0$ and $\alpha\in (0,1)$ such that 
	\begin{equation} \label{eti}
	|\nabla u(x)|\leq C\,\frac{g(|x|)}{|x|}\quad \mbox{and}\quad |\nabla u(x)-\nabla u(x')|\leq C\,\frac{g(|x|)}{|x|^{1+\alpha}}\, |x-x'|^\alpha
	\end{equation}
	for every $x,x'$ in $\mathbb R^N$ satisfying $0<|x|\leq |x'|<r_0$. 
\end{lemma}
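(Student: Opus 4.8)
The plan is to prove \eqref{eti} by a rescaling argument that, at every scale $|x_0|$, reduces the matter to a single interior elliptic estimate whose constants are independent of $x_0$. Fix $x_0$ with $0<|x_0|<r_0$, set $R:=|x_0|$, and define the rescaled function $v(y):=u(x_0+Ry)/g(R)$ for $y\in B_{1/2}(0)$; note that $x:=x_0+Ry$ then satisfies $R/2\le |x|\le 3R/2$, so it stays in $B_{2r_0}(0)\setminus\{0\}$ where \eqref{rafi} holds. Since $\Delta_x=R^{-2}\Delta_y$, equation \eqref{eq1} becomes $-\Delta_y v=a(y)\,v-b(y)\,v^q$ in $B_{1/2}(0)$, with $a(y):=\lambda R^2|x|^{-2}$ and $b(y):=R^2|x|^\theta g(R)^{q-1}$. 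Using $|x|/R\in[1/2,3/2]$ together with $\theta+2=(q-1)\Theta$ from \eqref{sigma}, I would rewrite $b(y)=(|x|/R)^{\theta}\,[R^\Theta g(R)]^{q-1}$, so that $a$ and $b$ and their first $y$-derivatives are bounded on $B_{1/2}(0)$ uniformly in $x_0$; the key point is that $[R^\Theta g(R)]^{q-1}$ stays bounded as $R\to 0^+$ by the hypothesis $\limsup_{r\to0^+}r^\Theta g(r)<\infty$.

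Next I would establish a uniform bound on $v$ itself. From \eqref{rafi} we have $0<v(y)\le C_1\,g(|x|)/g(R)$, and since $g\in RV_{-\delta}(0+)$, the uniform convergence theorem for regularly varying functions gives $g(|x|)/g(R)=g((|x|/R)R)/g(R)\to(|x|/R)^{-\delta}$ uniformly for $|x|/R\in[1/2,3/2]$ as $R\to0^+$; hence $v$ is bounded on $B_{1/2}(0)$ by a constant independent of $x_0$. Consequently the right-hand side $a\,v-b\,v^q$ is uniformly bounded in $L^\infty(B_{1/2}(0))$, and interior elliptic estimates (for instance $W^{2,p}$ estimates with $p>N$ followed by Sobolev embedding, or Schauder estimates) yield, for some $\alpha\in(0,1)$, a uniform bound $\|v\|_{C^{1,\alpha}(B_{1/4}(0))}\le C'$, where $C'$ depends only on $N,q,\lambda,\theta$ and the constant in \eqref{rafi}.

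Finally I would undo the scaling. Since $\nabla_y v(y)=(R/g(R))\,\nabla u(x_0+Ry)$, evaluating at $y=0$ gives $|\nabla u(x_0)|=(g(R)/R)\,|\nabla_y v(0)|\le C'\,g(|x_0|)/|x_0|$, which is the first estimate in \eqref{eti}. For the H\"older bound I would split into two cases. If $|x-x'|\le|x|/4$, I choose $x_0=x$ and $R=|x|$, so that both $x$ and $x'$ lie in $B_{1/4}(0)$ after rescaling, and the uniform H\"older seminorm of $\nabla_y v$ directly produces $|\nabla u(x)-\nabla u(x')|\le C'\,(g(|x|)/|x|^{1+\alpha})\,|x-x'|^\alpha$. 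If instead $|x-x'|>|x|/4$, I would bound the left side by $|\nabla u(x)|+|\nabla u(x')|$ and apply the first estimate at both points; here $g(|x|)/|x|\le 4^\alpha\,(g(|x|)/|x|^{1+\alpha})|x-x'|^\alpha$ is immediate, while the term $g(|x'|)/|x'|$ must be compared to $g(|x|)/|x|$ for $|x|\le|x'|$. The main obstacle is precisely this last comparison: since $r\mapsto g(r)/r$ lies in $RV_{-\delta-1}(0+)$ with negative index, Potter's bounds give $g(|x'|)/|x'|\le C\,g(|x|)/|x|$ for $|x|\le|x'|<r_0$, which closes the argument. Keeping every constant uniform in $x_0$ — which forces careful use of the uniform convergence theorem and of Potter's bounds for the regularly varying $g$ — is the delicate part; the elliptic regularity input itself is standard once the rescaled problem is seen to be uniformly well-behaved.
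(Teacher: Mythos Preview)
Your proposal is correct and follows the standard dilation argument for such scale-invariant regularity estimates. The paper itself does not prove this lemma; it simply quotes it from \cite[Lemma~4.9]{Cmem}, where the proof in a more general setting is precisely the rescaling scheme you outline: blow up at each point $x_0$ by the factor $R=|x_0|$ and the amplitude $g(R)$, use the uniform convergence theorem for $g\in RV_{-\delta}(0+)$ together with $\limsup_{r\to0}r^\Theta g(r)<\infty$ to see the rescaled data are uniformly bounded, apply interior $W^{2,p}$ (or Schauder) estimates on $B_{1/4}(0)$, and unscale. Your case split for the H\"older seminorm and the use of Potter's bounds to control $g(|x'|)/|x'|$ by $g(|x|)/|x|$ when $|x|\le|x'|$ (the index of $g(r)/r$ being $-\delta-1<0$) are exactly the right closing steps.
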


\begin{remark} \label{r-regu}
	If in Lemma~\ref{regu} we assume that \eqref{rafi} holds 
	for $g\in RV_{-\delta}(0+)$ with $\delta\leq \Theta<0$ or 
	$\delta<0\leq \Theta$, then the assertion of \eqref{eti} remains valid, subject to a slight change only in the second inequality, which should be replaced by 
	$$  |\nabla u(x)-\nabla u(x')|\leq C\,\frac{g(|x'|)}{|x|^{1+\alpha}}\, |x-x'|^\alpha
	$$
	for every $x,x'$ in $\mathbb R^N$ satisfying $0<|x|\leq |x'|<r_0$. Here, and in the first inequality of \eqref{eti}, the constant $C$ will depend on $|\delta|$ (only when $\delta<0$). The explanation for these changes is provided in Remark~4.11 of \cite[p. 34]{Cmem}.  
\end{remark}

\section{Proof of Theorem~\ref{uniq} and Theorem~\ref{mar}} \label{t31}

The claim of Theorem~\ref{mar} follows 
from Corollary~\ref{fum} and Proposition~\ref{lim-o}. 
The ingredients necessary for proving the latter are given in 
Section~\ref{motiv}.  We conclude the assertion of Theorem~\ref{uniq} in Section~\ref{sec-uni} based on Theorem~\ref{mar}. 

To simplify writing, by Case $(\mathcal M)$, we mean Case~$(\mathcal M_1)$ or Case~$(\mathcal M_2)$. 
Here, we assume that either Case $(\mathcal M)$ or Case $(\mathcal U)$ holds, that is,
\begin{enumerate}
	\item[$(\mathcal M)$] $\lambda\leq \lambda_H$ and 
	$\theta\in (-\infty,\theta_-)\cup (\theta_+,\infty)$;
	\item[$(\mathcal U)$] $\lambda>\lambda_H$ and every $\theta\in \mathbb R$.  
\end{enumerate}  

%In Lemma~\ref{sun-s} 
We construct refined local sub/super-solutions of \eqref{eq1} with $\Omega=B_1(0)$, which we use to fine-tune the behavior of the positive solutions of \eqref{eq1} near zero. We illustrate this point. %by looking at Lemma~\ref{lem01}. 
In Case $(\mathcal M)$ and Case $(\mathcal U)$, we always have at our disposal the solution $U_0$ of \eqref{eq1}.
Using our super-solutions constructed in this section, jointly with the {\em a priori} estimates in Lemma~\ref{lem01}, we obtain in Proposition~\ref{lim-o} that every  (sub-)solution of \eqref{eq1} with $\Omega=\Omega_0$ satisfies 
\begin{equation} \label{hap} \limsup_{|x|\to 0} \frac{u(x)}{U_0(x)}\leq 1.\end{equation} 
Moreover, using our refined local sub/super-solutions in Cases~$(\mathcal U)$ and $(\mathcal M)$, we show that the proof of $\lim_{|x|\to 0} u(x)/U_0(x)=1$ reduces to proving %\eqref{limi}, that is, 
\begin{equation} \label{lin} \liminf_{|x|\to 0} \frac{u(x)}{U_0(x)}>0. \end{equation} 

%\begin{equation} \label{mir}  \liminf_{|x|\to 0} \frac{u(x)}{U_0(x)}>0. 
%\end{equation}
%Later, in Section~\ref{t31} we will establish
%\eqref{mir} using another construction of sub-solutions. 

\subsection{Construction and motivation of our refined sub/super-solutions} \label{motiv}

The idea of constructing a suitable family of sub-solutions and super-solutions to obtain more precise upper and lower bound estimates near zero has been used successfully for various nonlinear elliptic equations without a Hardy potential, see for example  \cites{cc,CC2015,CD2007,CD2010}. 

In our situation, the introduction of the Hardy potential in Case $(\mathcal U)$ and Case $(\mathcal M_1)$ poses an extra difficulty when comparing an arbitrary solution $u$ of \eqref{eq1} with a super-solution (or sub-solution). Such a comparison will take place on a punctured ball, $B_{r_0}(0)\setminus \{0\}$ with $r_0\in (0,1)$ small enough such that $\overline{B_{r_0}(0)}\subset \Omega$. To apply the comparison principle in Lemma~\ref{compa}, we need to ensure that the solution $u$ is bounded above by the super-solution (and below by the sub-solution) on $\partial B_{r_0}(0)$ and also as $|x|\to 0$.

Let $\alpha>0$ and $\nu>0$ be fixed. For every $\varepsilon\in (0,1)$ and $\eta>0$, we define
\begin{equation} \label{sub-super} 
\begin{aligned}
& w_{\varepsilon,\eta}^+(x):= \left(1+ \varepsilon\right) 
U_0(x)\,|x|^{-\eta} \left( 1+\frac{|x|^\alpha}{\nu}\right)^{ \frac{1}{\sqrt{\alpha}}},\\
& w_{\varepsilon,\eta}^-(x):= \left(1- \varepsilon\right) 
U_0(x)\,|x|^{\eta} \left( 1+\frac{|x|^\alpha}{\nu}\right)^{ -\frac{1}{\sqrt{\alpha}}},
\end{aligned}
\end{equation}
for every $x\in \mathbb R^N \setminus \{0\}$, where $U_0$ is given by 
\eqref{node}.

Assuming \eqref{lin} and using the {\em a priori} estimates in Lemma~\ref{lem01}, it is clear that we get the desired control {\em near zero} by introducing along $U_0(x)$ the factor 
$|x|^\eta$ in the sub-solution $w_{\varepsilon,\eta}^-$ and the factor 
$|x|^{-\eta}$ in the super-solution $w_{\varepsilon,\eta}^+$. 

Even though for every $\varepsilon\in (0,1)$ and $\eta>0$, we find that  $(1+\varepsilon)\,U_0(x)|x|^{-\eta}$ and $(1-\varepsilon)\,U_0(x)|x|^{\eta}$ is a super-solution and sub-solution of \eqref{eq1} in $B_1(0)\setminus \{0\}$, respectively, the shortcoming of these becomes apparent when comparing them with $u$ on $\partial B_{r_0}(0)$. As we take $\eta\to 0$ and eventually $\varepsilon\to 0$, we need another degree of freedom to adjust the values of  sub/super-solutions on $\partial B_{r_0}(0)$. 

Previously, the above issue was resolved by adding to the super-solution a corrective term (itself a super-solution) such that its behavior near zero is dominated by $U_0$. But only in  
Case $(\mathcal M_2)$ this strategy can work as follows: we can add $C|x|^{-p_-}$ to the super-solution $(1+\varepsilon)\,U_0(x)|x|^{-\eta}$ (or to the solution $u$ in order to control the sub-solution $(1-\varepsilon)\,U_0(x)|x|^{\eta}$), where $C>0$ is a suitable constant depending on $u$ and $r_0$. This works well only in Case $(\mathcal M_2)$ since then $|x|^{\Theta-p_-}\to 0$ as $|x|\to 0$ and $C|x|^{-p_-}$ is a super-solution of \eqref{eq1}. 
But the above strategy does not work in Case~$(\mathcal M_1)$ or Case $(\mathcal U)$. Indeed, in Case~$(\mathcal M_1)$ we have $|x|^{\Theta-p_-}\to \infty$ as $|x|\to 0$, whereas $p_-$ is not well-defined in Case $(\mathcal U)$ when $\lambda>\lambda_H$. For this reason, we have to reshape our super-solutions and sub-solutions: we multiply  $(1+\varepsilon)\,U_0(x)|x|^{-\eta}$ by an {\em extra} factor of the form $ \left(1+|x|^\alpha/\nu\right)^{1/\sqrt{\alpha}} $ giving the super-solution $w_{\varepsilon,\eta}^+$ and correspondingly multiply $(1-\varepsilon)\,U_0(x)|x|^{\eta}$ by  $\left(1+|x|^\alpha/\nu\right)^{-1/\sqrt{\alpha}}$ to yield the sub-solution $w_{\varepsilon,\eta}^-$,  
where $\alpha>0$ is fixed suitably small, depending only on $N,q,\theta$ and $\lambda$, while $\nu>0$ is arbitrary.   
The verification that $w_{\varepsilon,\eta}^+$ and $w_{\varepsilon,\eta}^-$ is a super-solution and sub-solution of \eqref{eq1} in $B_1(0)\setminus \{0\}$, respectively, is done in Lemma~\ref{sun-s}. We can now  choose $\nu>0$ small, depending only on $r_0,u,N,q,\theta$ and $\lambda$, such that  
$w_{\varepsilon,\eta}^{-}\leq u\leq w_{\varepsilon,\eta}^+$ on $ \partial B_{r_0}(0)$ for every $\varepsilon\in (0,1)$ and $\eta>0$ small. By the comparison principle in Lemma~\ref{compa}, we conclude \eqref{kop}.

We next proceed with the details. 

\begin{lemma} \label{sun-s} Assume Case $(\mathcal M)$ or Case $(\mathcal U)$. Fix $\alpha>0$ small, depending only on $N,q,\theta$ and $\lambda$. Let $\nu>0$ be arbitrary. For every $\varepsilon\in (0,1)$, there exists $\eta_0=\eta_0(\varepsilon,N,q,\theta,\lambda)>0$ such that 
	%$w_{\varepsilon,\eta}^+$ and $w_{\varepsilon,\eta}^-$ satisfy
	\begin{equation} \label{ulm} -\mathbb L_\lambda(w_{\varepsilon,\eta}^+)+|x|^\theta (w_{\varepsilon,\eta}^+)^q \geq 0\quad \mbox{and} 
	\quad -\mathbb L_\lambda(w_{\varepsilon,\eta}^-)+
	|x|^\theta (w_{\varepsilon,\eta}^-)^q \leq 0
	\end{equation}
	in $B_1(0)\setminus \{0\}$, for every $\eta\in (0,\eta_0)$. 
\end{lemma}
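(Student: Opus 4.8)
The plan is to reduce the two differential inequalities in \eqref{ulm} to a single scalar inequality in one bounded variable and then to separate the roles of the three parameters $\alpha,\varepsilon,\eta$. Since $w_{\varepsilon,\eta}^{\pm}$ are radial, I would write them as $w_{\varepsilon,\eta}^{\pm}=U_0\,\Phi^{\pm}$ with $\Phi^{\pm}(r)=(1\pm\varepsilon)\,r^{\mp\eta}\,(1+r^{\alpha}/\nu)^{\pm 1/\sqrt{\alpha}}$ and $r=|x|$, and exploit that $U_0$ solves \eqref{eq1} exactly: a direct check using $\ell=\Theta^{2}-(N-2)\Theta+\lambda$ from \eqref{sigma} gives $\mathbb L_\lambda U_0=|x|^{\theta}U_0^{q}=\ell\,r^{-2}U_0$. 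Expanding $\mathbb L_\lambda(U_0\Phi^{\pm})$ with the radial Laplacian and dividing by $U_0\Phi^{\pm}r^{-2}>0$, the super-solution inequality becomes
\[
\ell\big((\Phi^{+})^{q-1}-1\big)-r^{2}\,\frac{(\Phi^{+})''}{\Phi^{+}}+(2\Theta-N+1)\,r\,\frac{(\Phi^{+})'}{\Phi^{+}}\ \geq\ 0,
\]
and the sub-solution inequality is identical with $\Phi^{-}$ in place of $\Phi^{+}$ and $\leq$ in place of $\geq$. This is the exact analogue of passing, in Lemma~\ref{con1}, from \eqref{da} to the scalar inequality \eqref{retin}.

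Next I would introduce the bounded variable $t:=(r^{\alpha}/\nu)/(1+r^{\alpha}/\nu)\in(0,1)$, for which the logarithmic derivatives collapse to the clean forms $r(\Phi^{\pm})'/\Phi^{\pm}=\pm\sqrt{\alpha}\,t\mp\eta$, while $r^{2}(\Phi^{\pm})''/\Phi^{\pm}$ is a polynomial in $t,\sqrt{\alpha},\eta$; crucially the exponent $1/\sqrt{\alpha}$ is calibrated precisely so that all derivative terms are $O(\sqrt{\alpha})$ uniformly in $t\in[0,1]$, mirroring how the coefficients $A_\alpha,B_\alpha$ in \eqref{abali} acquire a factor $\sqrt{\alpha}$. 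The nonlinear contribution is $\ell\big((1\pm\varepsilon)^{q-1}r^{\mp(q-1)\eta}(1-t)^{\mp(q-1)/\sqrt{\alpha}}-1\big)$, where for $r\in(0,1)$ the factor $r^{\mp(q-1)\eta}$ is favorable (it is $\geq1$ for $\Phi^{+}$ and $\leq1$ for $\Phi^{-}$); in particular $\nu$ disappears from the reduced inequality at $\eta=0$, which is why the threshold $\eta_0$ will not depend on $\nu$.

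I would then settle the case $\eta=0$ first. The elementary bound $(1-t)^{-(q-1)/\sqrt{\alpha}}\geq 1+\tfrac{q-1}{\sqrt{\alpha}}t$ shows that the nonlinear surplus is at least $\ell\big((1+\varepsilon)^{q-1}-1\big)+\tfrac{\ell(1+\varepsilon)^{q-1}(q-1)}{\sqrt{\alpha}}t$, whereas the derivative corrections are bounded by $C_0\sqrt{\alpha}\,t$ with $C_0$ depending only on $N,q,\theta,\lambda$; choosing $\alpha$ small (again only in terms of $N,q,\theta,\lambda$) so that $\ell(q-1)/\sqrt{\alpha}\geq C_0\sqrt{\alpha}$ makes the super-solution inequality hold on all of $(0,1)$ with a uniform positive floor $\ell((1+\varepsilon)^{q-1}-1)$, and the mirror bound $(1-t)^{(q-1)/\sqrt{\alpha}}\leq 1$ handles $\Phi^{-}$ with floor $\ell((1-\varepsilon)^{q-1}-1)<0$. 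Finally, switching on $\eta>0$ perturbs only the derivative terms, by an amount $O(\eta)$ uniform in $t$ (the nonlinear factor $r^{\mp(q-1)\eta}$ stays favorable), so taking $\eta_0=\eta_0(\varepsilon,N,q,\theta,\lambda)>0$ small enough that this $O(\eta)$ defect stays below the floor preserves both inequalities for all $\eta\in(0,\eta_0)$. The main obstacle is the case $\eta=0$: one must control the inequality \emph{uniformly} in $t\in(0,1)$, reconciling the regime $t\to0$ (where surplus and corrections both vanish, and the surplus of order $t/\sqrt{\alpha}$ must beat the corrections of order $\sqrt{\alpha}\,t$) with the regime $t\to1$ (where the nonlinear factor blows up), and it is precisely this competition that dictates the choice of a small $\alpha$ depending only on $N,q,\theta,\lambda$.
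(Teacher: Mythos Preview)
Your reduction is correct and your overall strategy works, but it differs from the paper's proof, and there is one sloppy point in the sub-solution case that you should tighten.

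\textbf{Comparison with the paper.} The paper uses the unbounded variable $t=|x|^{\alpha}/\nu\in(0,\infty)$ and shows that the inequalities \eqref{ulm} reduce to $G_\eta^{+}(t)\le (1+\varepsilon)^{q-1}$ and $G_\eta^{-}(t)\ge (1-\varepsilon)^{q-1}$, where $G_\eta^{\pm}(t)=(1+t)^{-2\mp(q-1)/\sqrt{\alpha}}\big(A_\eta^{\pm}t^{2}+B_\eta^{\pm}t+C_\eta^{\pm}\big)$ with explicit coefficients. It then computes $\frac{d}{dt}G_\eta^{\pm}$, checks that for $\alpha$ small and then $\eta$ small the quadratic factor in the derivative has all positive coefficients, and concludes that $G_\eta^{+}$ is decreasing and $G_\eta^{-}$ is increasing on $(0,\infty)$; hence the extrema are at $t=0$, equal to $C_\eta^{\pm}\to 1$ as $\eta\to 0$. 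Your route instead passes to the bounded variable $t\in(0,1)$, treats $\eta=0$ first via Bernoulli-type bounds on $(1-t)^{\mp(q-1)/\sqrt{\alpha}}$, and then perturbs in $\eta$. Both approaches exploit that the exponent $1/\sqrt{\alpha}$ makes the derivative terms $O(\sqrt{\alpha})$; the paper's monotonicity argument is a clean closed-form computation, while your perturbative argument is more conceptual and explains transparently why $\eta_0$ depends on $\varepsilon$ but $\alpha$ does not.

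\textbf{The gap.} For $\Phi^{-}$ at $\eta=0$, the bound $(1-t)^{(q-1)/\sqrt{\alpha}}\le 1$ alone only gives $\ell\big((\Phi^{-})^{q-1}-1\big)\le \ell\big((1-\varepsilon)^{q-1}-1\big)$, a constant ceiling. Adding the derivative corrections, which can be as large as $C_0\sqrt{\alpha}\,t$, you would need $C_0\sqrt{\alpha}\le \ell\big(1-(1-\varepsilon)^{q-1}\big)$, forcing $\alpha$ to depend on $\varepsilon$---contrary to the statement. The fix is immediate: for $\alpha\le (q-1)^{2}$ one has the sharper bound $(1-t)^{(q-1)/\sqrt{\alpha}}\le 1-t$, which yields
\[
\ell\big((\Phi^{-})^{q-1}-1\big)+D^{-}(t)\ \le\ \ell\big((1-\varepsilon)^{q-1}-1\big)+\big[C_0\sqrt{\alpha}-\ell(1-\varepsilon)^{q-1}\big]t.
\]
This linear expression is $\le 0$ for all $t\in[0,1]$ under the $\varepsilon$-independent condition $C_0\sqrt{\alpha}\le \ell$ (check the endpoints $t=0$ and $t=1$), and its maximum over $[0,1]$ is at most $\max\{\ell((1-\varepsilon)^{q-1}-1),\,C_0\sqrt{\alpha}-\ell\}<0$, giving the negative floor needed for your $\eta$-perturbation step.
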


\begin{proof} Let $\alpha>0$. 
	For every $t\geq 0$ and $\eta>0$, we define 
	\begin{equation} \label{geta} G_\eta^\pm (t):=  \left (1+t\right )^{-2\mp\frac{(q-1)}{\sqrt{\alpha}}} \left(A_\eta^\pm \,t^2+B_\eta^\pm\, t+C_\eta^\pm\right), 
	\end{equation} where 
	$A_\eta^\pm$, $B_\eta^\pm$ and $C_\eta^\pm$ are given by 
	\begin{equation}\nonumber
	\left\{\begin{aligned}
	& A_\eta^\pm:= 1\pm\frac{\sqrt{\alpha}\left(N-2-2\,\Theta\pm\sqrt{\alpha}\right)}{\ell}\mp\frac{\eta\left(N-2-2\,\Theta\mp\eta\pm 2\sqrt{\alpha}\right)}{\ell},\\
	& B_\eta^\pm:=2\pm\frac{\sqrt{\alpha}\left(N-2-2\,\Theta+\alpha\right)}{\ell}\mp\frac{2\,\eta\left(N-2-2\,\Theta\mp\eta\pm\sqrt{\alpha}\right)}{\ell},\\
	&	C_\eta^\pm:=1\mp\frac{\eta\left(N-2-2\,\Theta\mp\eta\right)}{\ell}\,.
	\end{aligned} \right.
	\end{equation}
	From the definition of $G_\eta^\pm$ in \eqref{geta}, we find that 
	\begin{equation}\nonumber
	\frac{d}{dt}	G_\eta^\pm(t)=\mp\frac{(q-1)}{\sqrt{\alpha}}\left(1+t\right)^{-3\mp\frac{(q-1)}{\sqrt{\alpha}}}\left(A_\eta^\pm\, t^2+\widetilde{B}_\eta^\pm \,t+\widetilde{C}_\eta^\pm\right),
	\end{equation}
	for every $t>0$, where  $\widetilde{B}_\eta^\pm$ and $\widetilde{C}_\eta^\pm$ are defined by
	\begin{equation}\nonumber
	\left\{\begin{aligned}
	& \widetilde{B}_\eta^\pm:=\left(1\pm \frac{\sqrt{\alpha}}{q-1}\right) B_\eta^\pm \mp \frac{2\sqrt{\alpha}}{q-1}\,A_\eta^\pm\\
	& \widetilde{C}_\eta^\pm:=\left(1\pm \frac{2\,\sqrt{\alpha}}{q-1}\right) C_\eta^\pm \mp \frac{\sqrt{\alpha}}{q-1}\,B_\eta^\pm.
	\end{aligned}\right.
	\end{equation}
	We choose $\alpha>0$ small enough, depending only on $N,q,\theta$ and $\lambda$, such that 
	$$ \lim_{\eta\to 0}  A_\eta^\pm>0,\quad 
	\lim_{\eta\to 0} \widetilde  B_\eta^\pm>0 \quad
	\mbox{and}\quad
	\lim_{\eta\to 0}  \widetilde C_\eta^\pm>0.
	$$
	Hence, there exists $\eta_1=\eta_1(N,q,\theta,\lambda)>0$ such that 
	$ A_\eta^\pm$, $\widetilde B_\eta^\pm$ and $\widetilde C_\eta^\pm$ are {\em all positive} for every $\eta\in (0,\eta_1)$. Therefore, $G_\eta^+$ is decreasing on $(0,\infty)$, whereas $G_\eta^-$ is increasing on $(0,\infty)$, leading to 
	$$ \sup_{t\in (0,\infty)} G_\eta^+(t)=G_\eta^+(0)=C_\eta^+\quad 
	\mbox{and}\quad \inf_{t\in (0,\infty)} G_\eta^-(t)=G_\eta^-(0)=C_\eta^-. 
	$$
	By direct computations, we observe that \eqref{ulm} holds if and only if 
	\begin{equation}\label{ssbb}
	|x|^{\eta\left(q-1\right)} G_\eta^+(|x|^\alpha/\nu)\leq (1+\varepsilon)^{q-1}\ \mbox{and} \ 
	|x|^{-\eta\left(q-1\right)} G_\eta^-(|x|^\alpha/\nu)\geq (1-\varepsilon)^{q-1}
	\end{equation}
	for every $|x|\in (0,1)$. Since $\lim_{\eta\to 0} C_\eta^\pm=1$, we observe that there exists $\eta_0\in (0,\eta_1)$ with $\eta_0$ depending on $\varepsilon,N,q,\theta$ and $\lambda$ such that 
	$ C_\eta^+\leq \left(1+\varepsilon\right)^{q-1}$ and  
	$C_\eta^-\geq \left(1-\varepsilon\right)^{q-1}$ for every $\eta\in (0,\eta_0)$. 
	Thus, \eqref{ssbb} is satisfied for every $0<|x|<1$
	and all $\eta\in (0,\eta_0)$. This finishes the proof. 
\end{proof}

\begin{proposition} \label{lim-o}	
	In Case $(\mathcal M)$ and Case~$(\mathcal U)$, every positive solution of \eqref{eq1} with $\Omega=\Omega_0$ satisfies \eqref{hap}. 
	In addition, we have   
	\begin{equation} \label{limin}  \liminf_{|x|\to 0} \frac{u(x)}{U_0(x)}>0\quad \mbox{if and only if}\quad \lim_{|x|\to 0} \frac{u(x)}{U_0(x)}=1.
	\end{equation} 
\end{proposition}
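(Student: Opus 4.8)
The plan is to sandwich an arbitrary solution $u$ between the refined super-solution $w_{\varepsilon,\eta}^+$ and sub-solution $w_{\varepsilon,\eta}^-$ of \eqref{sub-super} on a small punctured ball, and then to let the auxiliary parameters tend to zero in a prescribed order. Fix $\alpha>0$ as in Lemma~\ref{sun-s} and pick $r_0\in(0,1)$ with $\overline{B_{r_0}(0)}\subset\Omega_0$. The preliminary observation that underpins everything is that on $\partial B_{r_0}(0)$ both desired inequalities $w_{\varepsilon,\eta}^-\leq u$ and $u\leq w_{\varepsilon,\eta}^+$ force $\nu$ in the same direction: as $\nu\to 0^+$ the factors $(1+r_0^\alpha/\nu)^{1/\sqrt\alpha}$ and $(1+r_0^\alpha/\nu)^{-1/\sqrt\alpha}$ become large and small, respectively. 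Since $u$ is continuous and strictly positive on the compact sphere $\partial B_{r_0}(0)$ while the prefactors $(1\pm\varepsilon)\,r_0^{\mp\eta}$ stay bounded away from $0$ and $\infty$ for $\varepsilon\in(0,1)$ and $\eta$ small, I can fix a single $\nu>0$, depending only on $r_0,u,N,q,\theta,\lambda$ and NOT on $\varepsilon$ or $\eta$, so that $w_{\varepsilon,\eta}^-\leq u\leq w_{\varepsilon,\eta}^+$ on $\partial B_{r_0}(0)$ for all such $\varepsilon,\eta$. This uniformity in $\eta$ is exactly what will later permit the limit $\eta\to 0^+$.

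To prove \eqref{hap}, I first upgrade the boundary bound to $u\leq w_{\varepsilon,\eta}^+$ on all of $B_{r_0}(0)\setminus\{0\}$. Because Lemma~\ref{compa} demands $\overline{\omega}\subseteq\mathbb R^N\setminus\{0\}$, I apply it on the annuli $\omega_\delta:=B_{r_0}(0)\setminus\overline{B_\delta(0)}$: the inequality holds on $\partial B_{r_0}(0)$ by the choice of $\nu$, and on $\partial B_\delta(0)$ for all small $\delta$ since the a priori estimate $u(x)\leq C_0|x|^{-\Theta}$ of Lemma~\ref{lem01} keeps $u/U_0$ bounded, whereas $w_{\varepsilon,\eta}^+/U_0=(1+\varepsilon)|x|^{-\eta}(1+|x|^\alpha/\nu)^{1/\sqrt\alpha}\to\infty$ as $|x|\to 0$. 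As $w_{\varepsilon,\eta}^+$ is a super-solution by Lemma~\ref{sun-s} and $u$ a solution, Lemma~\ref{compa} gives $u\leq w_{\varepsilon,\eta}^+$ on each $\omega_\delta$, hence on $B_{r_0}(0)\setminus\{0\}$. The key step follows: for fixed $x$ the resulting inequality holds for every admissible $\eta$ with the same $\nu$, so letting $\eta\to 0^+$ (whence $|x|^{-\eta}\to 1$, as $0<|x|<1$) yields
\begin{equation*}
\frac{u(x)}{U_0(x)}\leq (1+\varepsilon)\Bigl(1+\frac{|x|^\alpha}{\nu}\Bigr)^{1/\sqrt\alpha}.
\end{equation*}
Passing to $\limsup_{|x|\to 0}$ and then $\varepsilon\to 0^+$ establishes \eqref{hap}.

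For the equivalence \eqref{limin}, the implication from right to left is immediate. For the converse, assume $m:=\liminf_{|x|\to 0}u/U_0>0$; in view of \eqref{hap} it suffices to prove $\liminf_{|x|\to 0}u/U_0\geq 1$. I run the symmetric argument, now with $w_{\varepsilon,\eta}^-$ as sub-solution and $u$ as super-solution in Lemma~\ref{compa} on the same annuli $\omega_\delta$. The inequality $w_{\varepsilon,\eta}^-\leq u$ holds on $\partial B_{r_0}(0)$ by the choice of $\nu$, and on $\partial B_\delta(0)$ for small $\delta$ precisely because the hypothesis $m>0$ gives $u\geq(m/2)\,U_0$ near zero while $w_{\varepsilon,\eta}^-/U_0=(1-\varepsilon)|x|^{\eta}(1+|x|^\alpha/\nu)^{-1/\sqrt\alpha}\to 0$. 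Thus $w_{\varepsilon,\eta}^-\leq u$ on $B_{r_0}(0)\setminus\{0\}$, and for fixed $x$ letting $\eta\to 0^+$ (so $|x|^{\eta}\to 1$) gives $u(x)/U_0(x)\geq(1-\varepsilon)(1+|x|^\alpha/\nu)^{-1/\sqrt\alpha}$; taking $\liminf_{|x|\to 0}$ and then $\varepsilon\to 0^+$ furnishes the bound $1$ and, together with \eqref{hap}, the limit $\lim_{|x|\to 0}u/U_0=1$.

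The main obstacle, on which the whole scheme hinges, is securing a single $\nu$ that validates the boundary comparison on $\partial B_{r_0}(0)$ uniformly in both $\varepsilon$ and $\eta$: without this uniformity one could not take $\eta\to 0^+$ for fixed $x$ to cancel the spurious factors $|x|^{\mp\eta}$, and the estimates would collapse back to the non-sharp a priori bound. The only remaining care lies in verifying the two inner-sphere inequalities on $\partial B_\delta(0)$—supplied by Lemma~\ref{lem01} for the upper bound and by the standing hypothesis $m>0$ for the lower bound—so that the comparison principle may legitimately be invoked on genuine annuli rather than on the punctured ball.
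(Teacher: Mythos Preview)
Your proof is correct and follows essentially the same approach as the paper's: choose $\nu$ on $\partial B_{r_0}(0)$ independently of $\varepsilon$ and $\eta$, sandwich $u$ between $w_{\varepsilon,\eta}^\pm$ via Lemma~\ref{compa} using Lemma~\ref{lem01} at the inner boundary for the upper bound and the hypothesis $\liminf_{|x|\to 0} u/U_0>0$ for the lower bound, then send $\eta\to 0$ and $\varepsilon\to 0$. Your explicit use of the annuli $\omega_\delta$ to fit the hypothesis $\overline{\omega}\subset\mathbb R^N\setminus\{0\}$ of Lemma~\ref{compa} is a welcome clarification that the paper leaves implicit.
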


\begin{proof} Let $r_0\in (0,1)$ be such that 
	$\overline{B_{r_0}(0)}\subset \Omega$. Fix $\alpha>0$ as in Lemma~\ref{sun-s}. Let $u$ be a positive solution of \eqref{eq1} in $\Omega\setminus \{0\}$. %satisfying \eqref{limin}. 
	We choose $\nu=\nu(r_0,u,N,q,\theta,\lambda)>0$ small 
	such that  the following two inequalities hold
	\begin{equation}\nonumber
	\left\{\begin{aligned}
	& U_0(r_0) \left(1+\frac{r_0^\alpha}{\nu}\right)^{-\frac{1}{\sqrt{\alpha}}} \leq \min_{x\in \partial B_{r_0}(0)} u(x)\\
	%\mbox{and}\quad
	& U_0(r_0)	\left(1+\frac{r_0^\alpha}{\nu}\right)^{\frac{1}{\sqrt{\alpha}}}\geq \max_{x\in \partial B_{r_0}(0)} u(x)  .
	\end{aligned}\right.
	\end{equation}
	Fix $\varepsilon\in (0,1)$ arbitrary. Let $\eta_0$ be given by Lemma~\ref{sun-s}. 
	Our choice of $\nu>0$ ensures that 
	\begin{equation} \label{li1} w_{\varepsilon,\eta}^{-}(x)\leq u(x)\leq w_{\varepsilon,\eta}^+(x) \ \mbox{for every } x\in \partial B_{r_0}(0)\ \mbox{and every } \eta\in (0,\eta_0),\end{equation} where $w_{\varepsilon,\eta}^{+}$ and $w_{\varepsilon,\eta}^{-}$ are defined in \eqref{sub-super}. Using %\eqref{limin} and 
	Lemma~\ref{lem01}, we find that 
	\begin{equation} \label{lit2} 
	%,\quad
	\lim_{|x|\to 0}\frac{u(x)}{w_{\varepsilon,\eta}^{+}(x)}=0. 
	\end{equation}
	Moreover, if $\liminf_{|x|\to 0} u(x)/U_0(x)>0$, then we find in addition that  
	\begin{equation} \label{huv}  \lim_{|x|\to 0} \frac{w_{\varepsilon,\eta}^{-}(x)}{u(x)}=0.
	\end{equation}
	In view of \eqref{ulm} and \eqref{li1}--\eqref{huv}, by the comparison principle in Lemma~\ref{compa}, we infer that
	\begin{equation}\label{sad}
	w_{\varepsilon,\eta}^-(x)\leq u(x)\leq w_{\eta,\varepsilon}^+(x)\quad\text{for every } 0<|x|\leq r_0 \ \mbox{ and all } \eta\in (0,\eta_0).  
	\end{equation}
	For every $x\in B_{r_0}(0)\setminus \{0\}$ fixed, by letting $\eta\rightarrow 0$ in  \eqref{sad}, we arrive at 
	$$ \left(1-\varepsilon\right)   \left( 1+\frac{|x|^\alpha}{\nu}\right)^{ -\frac{1}{\sqrt{\alpha}}}\leq \frac{u(x)}{U_0(x)}\leq \left(1+\varepsilon\right)   \left( 1+\frac{|x|^\alpha}{\nu}\right)^{ \frac{1}{\sqrt{\alpha}}}.
	$$
	Thus, for every $\varepsilon\in (0,1)$, it follows that 
	$$ 1-\varepsilon\leq \liminf_{|x|\to 0}  \frac{u(x)}{U_0(x)}\leq \limsup_{|x|\to 0}\frac{u(x)}{U_0(x)}\leq 1+\varepsilon.
	$$
	Hence, by passing to the limit $\varepsilon\to 0$, we conclude that $\lim_{|x|\to 0} u(x)/U_0(x)=1$ as desired. This completes the proof.  
\end{proof}

\begin{remark} In the framework of Proposition~\ref{lim-o}, every solution $u$ of \eqref{eq1} satisfies \eqref{hap}. On the other hand, to prove that $\lim_{|x|\to 0} u(x)/U_0(x)=1$, the hypothesis $\liminf_{|x|\to 0} u(x)/U_0(x)>0$ is necessary and we cannot dispense with in Case $(\mathcal M_2)$. To see this, we draw attention to Case $(\mathcal M_2)$ in Theorem~\ref{th-cd} when a solution $u$ of \eqref{eq1} may satisfy {\bf (A)} $\lim_{|x|\to 0}|x|^{p_-} u(x)\in (0,\infty)$ or {\bf (B)} $\lim_{|x|\to 0} u(x)/\Phi_\lambda^+(x)=\gamma\in (0,\infty)$ (and then $\lim_{|x|\to 0} u(x)/U_0(x)=0$). Theorem~\ref{est} shows that there exist solutions for \eqref{eq1} in each of the situations outlined in Theorem~\ref{th-cd}.    
\end{remark}

\subsection{Proof of Theorem~\ref{uniq}} \label{sec-uni}

Let $\Omega=\mathbb R^N$. We show that $U_0$ in \eqref{node} is the only solution of \eqref{eq1} in Case~$({\mathcal U})$. 
Let $u$ be a solution of \eqref{eq1}. Let $\varepsilon\in (0,1)$ be arbitrary. 
By Theorem~\ref{mar}, $u$ satisfies \eqref{kop}. 
Then, using the Kelvin transform (see Section~\ref{mainr}), we obtain \eqref{kop2}. %holds for every solution of \eqref{eq1}.   
Hence, there exist  $R_\varepsilon>r_\varepsilon>0$ such that 
\begin{equation}\label{blai} \left(1-\varepsilon\right)U_0(x)\leq u(x)\leq \left(1+\varepsilon\right)U_0(x)\quad \mbox{for every }|x|\in (0,r_\varepsilon]\cup [R_\varepsilon,\infty).\end{equation}
Since $U_0$ is a positive solution of \eqref{eq1}, we find that 
$$ -\mathbb L_\lambda ((1+\varepsilon)\,U_0)+|x|^\theta (1+\varepsilon)^q \,U_0^q\geq 0\quad \mbox{in } \mathbb R^N\setminus \{0\}$$ and, similarly, 
$$ -\mathbb L_\lambda ((1-\varepsilon)\,U_0)+|x|^\theta (1-\varepsilon)^q \,U_0^q\leq 0\quad \mbox{in } \mathbb R^N\setminus \{0\}.$$
Hence, the comparison principle in Lemma~\ref{compa} gives that 
the inequalities in \eqref{blai} hold for every $x\in \mathbb R^N\setminus \{0\}$. By letting $\varepsilon\to 0$, we arrive at $u\equiv U_0$ in $\mathbb R^N\setminus \{0\}$. This ends the proof of Theorem~\ref{uniq}.

\section{Proof of Theorem~\ref{est}} \label{estp}

Our aim is to prove the assertions of Theorem~\ref{est} on problem \eqref{boun}, namely, 
\begin{equation} \label{boun1} 
\left\{\begin{aligned}
& -\Delta u-\frac{\lambda}{|x|^2}u+|x|^{\theta}u^q=0\ \  \mbox{in }\Omega\setminus \{0\},&\\
& u=h\geq 0\ \  \mbox{on } \partial\Omega,\quad\ u>0 \ \ \mbox{in } \Omega\setminus\{0\},&
%& u>0 && \mbox{in } \Omega,& 
\end{aligned}\right.
\end{equation} 
where throughout this section, $\Omega\subseteq \mathbb R^N$ is a smooth bounded domain containing zero. 
In Lemma~\ref{nonz} we establish the first three statements in Theorem~\ref{est}, whereas the last one regarding Case $(\mathcal M_2)$ is proved separately in Lemma~\ref{m2c}.  

\begin{lemma} \label{nonz}  
	Suppose that $h\in C(\partial \Omega)$ is a non-negative 
	function. 
	\begin{enumerate}
		\item[{\rm (1)}] Let Case $(\mathcal U)$ hold. Then, there exists a unique solution $u_h$ of problem \eqref{boun1}. Moreover, if 
		$\Theta<(N-2)/2$ and $h\equiv 0$, then 
		$u_h(x)/|x|$ and $|x|^{\theta+1} u_h^q$ belong to $ L^2(\Omega)$, $u_h\in H^1_0(\Omega)$ and,
		for every $\varphi\in H_0^1(\Omega)$,   
		\begin{equation} 
		\label{dist}
		\int_\Omega \nabla u_h\cdot \nabla \varphi\,dx-\int_\Omega \frac{\lambda}{|x|^2}u_h\,\varphi\,dx+\int_\Omega
		|x|^{\theta}u_h^q\,\varphi\,dx=0.
		\end{equation} 
		%can be extended as distributional solution of \eqref{eq1} in $\Omega$. 
		\item[{\rm (2)}] 
		Assume Case $(\mathcal M_1)$ or Case $(\mathcal N)$. If $h\not\equiv 0$ on $\partial \Omega$, then problem \eqref{boun1} has a unique solution $u_h$. %Moreover, $u_h$ can be extended as a solution of \eqref{eq1} in $\Omega$. 

		\item[{\rm (3)}] If $h\equiv 0 $ on $\partial\Omega$, then \eqref{boun1} has no solutions in Case $(\mathcal M_1)$ and Case $(\mathcal N)$. 
		
	\end{enumerate}
\end{lemma}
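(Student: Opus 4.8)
The plan is to establish, in order, uniqueness, then existence together with positivity, and finally non-existence for $h\equiv 0$, driving all three by the sharp asymptotics of Theorem~\ref{mar} and Theorem~\ref{th-cd}, the comparison principle (Lemma~\ref{compa}), the \emph{a priori} bound of Lemma~\ref{lem01}, and the sub-solution $z_\delta$ of Lemma~\ref{con2}. Beginning with uniqueness, let $u_1,u_2$ solve \eqref{boun1} with the same $h$. In Cases $(\mathcal U)$ and $(\mathcal M_1)$, Theorem~\ref{mar} gives $u_i(x)/U_0(x)\to 1$, hence $u_1/u_2\to 1$ as $|x|\to 0$. For fixed $\varepsilon\in(0,1)$ the function $(1+\varepsilon)u_2$ is a super-solution of \eqref{eq1} (because $q>1$), it dominates $u_1$ on $\partial\Omega$ (both equal $h\ge 0$), and since $u_1/((1+\varepsilon)u_2)\to 1/(1+\varepsilon)<1$ it dominates $u_1$ on $\partial B_\delta(0)$ for small $\delta$; Lemma~\ref{compa} on $\Omega\setminus\overline{B_\delta(0)}$ then gives $u_1\le(1+\varepsilon)u_2$, and letting $\delta\to 0$, $\varepsilon\to 0$ and interchanging $u_1,u_2$ yields $u_1=u_2$. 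The identical argument covers Case $(\mathcal N)$ in the sub-cases \eqref{110} and \eqref{111}, where the leading constant in Theorem~\ref{th-cd}(iii) is fixed so that $u_1/u_2\to 1$ again. The remaining sub-case \eqref{gin} is different, since $\lim_{|x|\to0}|x|^{p_-}u$ is \emph{not} pinned down; there I would switch to the variational structure: as $p_-<(N-2)/2$ one has $u_1,u_2\in H^1(\Omega)$, and testing the difference of the two weak formulations against $u_1-u_2\in H^1_0(\Omega)$, the Hardy inequality ($\lambda\le\lambda_H$) together with the strict monotonicity of $t\mapsto t^q$ forces $u_1=u_2$.

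For existence, take $C>0$ large and for each $k\in\N$ let $u_{h,k}$ be the unique positive solution of \eqref{eq1} on $\Omega\setminus\overline{B_{1/k}(0)}$ with $u=h$ on $\partial\Omega$ and $u=C|x|^{-\Theta}$ on $\partial B_{1/k}(0)$ (standard, the domain avoiding the origin). A direct computation shows $C|x|^{-\Theta}$ is a super-solution (automatic when $\ell\le 0$, and valid for $C^{q-1}\ge \ell$ when $\ell>0$), so $u_{h,k}\le C|x|^{-\Theta}$; comparing $u_{h,k+1}$ and $u_{h,k}$ on the smaller annulus shows $\{u_{h,k}\}_k$ is decreasing. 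Thus $u_{h,k}\downarrow u_h\ge 0$, the convergence holding in $C^1_{\rm loc}(\Omega\setminus\{0\})$ by the interior estimates of Lemma~\ref{regu}, so that $u_h$ solves \eqref{eq1}, equals $h$ on $\partial\Omega$, and satisfies $u_h\le C_0|x|^{-\Theta}$ by Lemma~\ref{lem01}. Positivity is immediate from the strong maximum principle when $h\not\equiv0$ (Cases $(\mathcal M_1)$, $(\mathcal N)$). When $h\equiv 0$ in Case $(\mathcal U)$ — the delicate point, since a priori $u_h$ could vanish — I would compare $z_\delta$ of Lemma~\ref{con2} with $u_{h,k}$ on $\{1/k\le|x|\le\delta\}$: here $z_\delta=0\le u_{h,k}$ on $\partial B_\delta(0)$ and $z_\delta\le c\,U_0\le C|x|^{-\Theta}=u_{h,k}$ on $\partial B_{1/k}(0)$ for $C$ large, so Lemma~\ref{compa} gives $z_\delta\le u_{h,k}$ for every $k$, whence $z_\delta\le u_h$; thus $u_h>0$ near zero and then throughout $\Omega\setminus\{0\}$ by the strong maximum principle.

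For the non-existence of solutions with $h\equiv 0$, suppose $u$ solved \eqref{boun1} with $h\equiv0$ in Case $(\mathcal M_1)$; then $u\sim U_0$ with $\Theta<p_-$ gives $\lim_{|x|\to0}|x|^{p_-}u=0$, and comparison with the super-solution $\varepsilon\Phi_\lambda^-=\varepsilon|x|^{-p_-}$ followed by $\varepsilon\to0$ forces $u\equiv 0$, a contradiction. In Case $(\mathcal N)$ each behavior of Theorem~\ref{th-cd}(iii) yields $\lim_{|x|\to0}|x|^{\Theta}u=0$; as $\ell\le0$ the function $\varepsilon|x|^{-\Theta}$ is now a super-solution, and the same comparison gives $u\equiv0$. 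For the remaining claim in Case $(\mathcal U)$ with $h\equiv 0$ and $\Theta<(N-2)/2$, the exact rate $u_h\sim U_0\sim|x|^{-\Theta}$ makes $\int_{B_1}|x|^{-2\Theta-2}\,dx<\infty$, whence $u_h/|x|$, $\nabla u_h$ and $|x|^{\theta+1}u_h^q$ lie in $L^2(\Omega)$ (the last because $\theta-\Theta q=-\Theta-2$), so $u_h\in H^1_0(\Omega)$ and \eqref{dist} extends to all $\varphi\in H^1_0(\Omega)$ by density.

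I expect the main obstacle to be precisely the uniqueness in sub-case \eqref{gin} of Case $(\mathcal N)$: there the comparison scheme breaks down because the coefficient of the leading singularity $|x|^{-p_-}$ is a free parameter, so two solutions with the same boundary datum need not be ordered, and one must instead extract that coefficient from the energy — using that the weak singularity places the solutions in $H^1(\Omega)$ and letting the Hardy inequality and the convexity of $t\mapsto t^q$ close the argument — equivalently, proving that the leading coefficient is uniquely determined by $h$.
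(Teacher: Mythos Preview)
Your proof is essentially correct and follows the paper's approach for existence, positivity, non-existence, and the $H^1_0$ claims. The one place where you diverge is uniqueness in Case $(\mathcal N)$, and there you have made life harder than necessary.

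You worry that in sub-case \eqref{gin} the leading coefficient of $|x|^{-p_-}$ is a free parameter, so the $(1+\varepsilon)u_2$ comparison breaks down, and you fall back on a variational/energy argument. That argument can be made to work (since $p_-<(N-2)/2$ in sub-case \eqref{gin}, one can indeed push the solutions into $H^1$ and test with the difference), but it requires you to first establish the extended weak formulation for these solutions --- essentially reproving Step~3 in a new setting --- and then invoke Hardy plus monotonicity. The paper bypasses all of this with a single observation: the opening line of Theorem~\ref{th-cd}(iii) says that in \emph{every} sub-case of $(\mathcal N)$ one has $\lim_{|x|\to0}u(x)/\Phi_\lambda^+(x)=0$. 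Hence for any $\varepsilon>0$ there exists $r_\varepsilon>0$ with $u_h\le\varepsilon\Phi_\lambda^+$ on $0<|x|\le r_\varepsilon$, and since $\varepsilon\Phi_\lambda^+ + U_h$ is a super-solution of \eqref{eq1} (because $-\mathbb L_\lambda\Phi_\lambda^+=0$ in the punctured ball and $U_h$ is a solution, so the absorption term only increases), Lemma~\ref{compa} gives $u_h\le\varepsilon\Phi_\lambda^+ + U_h$ in $\Omega\setminus\{0\}$. Letting $\varepsilon\to0$ and interchanging yields $u_h=U_h$. This is uniform across all of Case $(\mathcal N)$ and needs no case split, no $H^1$ regularity, and no energy identity.

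A minor remark on your non-existence argument in Case $(\mathcal N)$: comparing with $\varepsilon|x|^{-\Theta}$ (which is a super-solution since $\ell\le0$) is a perfectly valid and arguably cleaner variant of the paper's comparison with $\varepsilon|x|^{-p_\pm}$; both work.
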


\begin{proof} We divide the proof into four steps.
	
	\vspace{0.2cm}
	\noindent 	{\bf Step~1.} 
	In Cases $(\mathcal U)$, $(\mathcal M_1)$ and $(\mathcal N)$, there is at most one solution of \eqref{boun1}. 
	
	\vspace{0.2cm}
	\noindent	{\em Proof of Step~1.} 
	We show that any two solutions $u_h$ and $U_h$ of \eqref{boun1}  coincide. 	
	
	In Cases $(\mathcal U)$ and  $(\mathcal M_1)$, we derive from Theorem~\ref{mar}, that  $u_h(x)/U_h(x)\rightarrow 1$ as $|x|\rightarrow 0$. Since $u_h=U_h=h$ on $\partial\Omega$, the comparison principle in Lemma~\ref{compa} yields that, for every $\varepsilon\in (0,1)$, 
	$$ (1-\varepsilon)\, U_h\leq u_h\leq (1+\varepsilon)\,U_h
	\quad \mbox{in } \Omega\setminus \{0\}.$$  Thus, by passing to the limit with $\varepsilon\to 0$, we arrive at $u_h=U_h$ in $\Omega\setminus \{0\}$. 
	
	In Case $(\mathcal N)$ by Theorem~\ref{th-cd} we have  $\lim_{|x|\to 0} u(x)/\Phi_\lambda^+(x)=0$ with $u=u_h$ and $u=U_h$. This means that for every $\varepsilon>0$, there exists $r_\varepsilon>0$ small such that 
	$u_h(x)\leq \varepsilon \,\Phi_\lambda^+(x)$ for every $0<|x|\leq r_\varepsilon$. By Lemma~\ref{compa}, it follows that $u_h\leq \varepsilon \,\Phi_\lambda^++U_h$ in $\Omega\setminus \{0\}$. By letting $\varepsilon\to 0$ and then interchanging $u_h$ and $U_h$, we arrive at
	$u_h=U_h$ in $\Omega\setminus\{0\}$ as desired.  
	
	\vspace{0.2cm}
	\noindent 	{\bf Step~2.} 
	Problem \eqref{boun1} has a solution $u_h$ 
	in Case $(\mathcal U)$ and, moreover, if $h\not\equiv 0$ also in Cases $(\mathcal M_1)$ and $(\mathcal N)$. 
	
	\vspace{0.2cm}
	\noindent  {\em Proof of Step~2.}
	We will obtain a solution $u_h$ of \eqref{boun1} as the limit $k\to \infty$ of a non-increasing sequence $\{u_{h,k}\}_{k\geq k_0}$ of solutions to boundary value problems (see \eqref{hats})
	on approximate domains $\Omega_k:=\Omega\setminus \overline{B_{1/k}(0)}$ for $k\geq k_0$ large.

	In Cases $(\mathcal U)$ and $(\mathcal M_1)$, we know from Theorem~\ref{mar} that, whenever it exists, a solution of \eqref{boun1} satisfies 
	$\lim_{|x|\to 0} |x|^\Theta u(x)=\ell^{1/(q-1)}$. This provides the inspiration for taking the boundary value problems in \eqref{hats}. It is useful to    
	remark that if $C>0$ is large enough, then $C\,|x|^{-\Theta}$ is always a super-solution of \eqref{eq1} in $\mathbb R^N\setminus \{0\}$. In Cases $(\mathcal U)$, $(\mathcal M_1)$ and $(\mathcal M_2)$, we need only choose $C\geq \ell^{1/(q-1)}$. In Case $(\mathcal N)$, we can take any $C>0$ since $\ell\leq 0$. 
	
	If necessary, we increase $C>0$ to ensure that $C\geq \max_{x\in \partial\Omega}|x|^\Theta h(x)$. 
	Fix $\delta>0$ small such that $\overline{B_\delta(0)}\subset \Omega$. Let $k_0$ be a positive integer such that $1/k_0<\delta$. 
	Then, for every integer $k\geq k_0$, 
	the following boundary value problem
	\begin{equation} \label{hats}
	\left\{\begin{aligned}
	& -\Delta u-\frac{\lambda}{|x|^2}u+|x|^{\theta}u^q=0&& \mbox{in }\Omega_k:=\Omega\setminus \overline{B_{1/k}(0)},&\\
	& u=h &&  \mbox{on } \partial\Omega,&\\
	& u(x)=C\, |x|^{-\Theta} && \mbox{for every  }|x|=1/k,&\\
	& u>0 && \mbox{in } \Omega_k
	\end{aligned} \right.\end{equation}  
	has a unique solution $u_{h,k}\in C^2(\Omega_k)\cap C(\overline{\Omega_k})$. This assertion is true for all $\lambda,\theta\in \mathbb R$ and $q>1$. The existence of a non-negative solution $u_{h,k}$ follows from Theorem 15.18 in Gilbarg and Trudinger \cite{GT1983}, whereas the strong maximum principle (see, for example,  Theorem~2.5.1 in \cite{pucci}) yields the positivity of $u_{h,k}$ in $\Omega_k$.  
	The uniqueness of $u_{h,k}$ is a consequence of Lemma~\ref{compa}. 
	Moreover, with our choice of $C$, we obtain that $$ u_{h,k+1}\leq u_{h,k}\leq C\,|x|^{-\Theta}\quad \mbox{in } \Omega_k\ \  \text{for every } k\geq k_0.$$
	Using Lemma~\ref{regu} (see also Remark~\ref{r-regu}) and a standard argument, we get that, up to a subsequence, $u_{h,k}\rightarrow u_h$ in $C_{\text{loc}}^{1}(\Omega\setminus \{0\})$ as $k\to \infty$, where $u_h$ is a non-negative solution of \eqref{boun1}. 
	
	It remains to prove that $u_h>0$ in $\Omega\setminus\{0\}$. We treat Case $(\mathcal U)$ separately from Case $(\mathcal M_1)$ and Case $(\mathcal N)$. 
	
	$\bullet$ In Cases $(\mathcal M_1)$ and $(\mathcal N)$  
	we assume that $h\not\equiv 0$ on $\partial\Omega$. Then, by the strong maximum principle, we conclude that $u_h$ is positive in $\Omega\setminus\{0\}$. 
	
	$\bullet$ In Case $(\mathcal U)$ our argument works for any non-negative function $h\in C(\partial\Omega)$ since we have Lemma~\ref{con2} at our disposal. More precisely, for fixed $c\in (0,c_\alpha)$ as in Lemma~\ref{con2} and $\delta>0$ chosen above, we define $z_\delta$ as in \eqref{zde}. Since \eqref{da2} holds, by Lemma~\ref{compa}, we derive that 
	\begin{equation} \label{loi} u_{h,k}(x)\geq z_\delta(x) \quad \mbox{for every } 1/k\leq |x|\leq \delta\ \mbox{and all }k\geq k_0. 
	\end{equation}  
	Thus, by letting $k\to \infty$ in \eqref{loi}, we find that $u_h(x)\geq z_\delta(x)>0$ for every $0<|x|<\delta$. This gives that $\liminf_{|x|\to 0} u_h(x)/U_0(x)>0$. 
	By the strong maximum principle, we have $u_h>0$ in $\Omega\setminus\{0\}$.  
	This concludes Step~2. 
	
	\vspace{0.2cm}
	\noindent 	{\bf Step~3.}
	If 
	$\Theta<(N-2)/2$ in Case $(\mathcal U)$ and $h=0$, then 
	$u_h(x)/|x|$ and $|x|^{\theta+1} u_h^q$ belong to $ L^2(\Omega)$, $u_h\in H^1_0(\Omega)$ and \eqref{dist} holds. 
	
	\vspace{0.2cm}
	\noindent 	{\em Proof of Step~3.}
	Let   
	$\Theta<(N-2)/2$ in Case $(\mathcal U)$. 
	By Theorem~\ref{mar}, $u_h$ satisfies \eqref{kop} and thus 
	$u_h(x)/|x|$ and $|x|^{\theta+1} (u_h(x))^q$ belong to $ L^2(\Omega)$. Hence, using that $\varphi(x)/|x|\in L^2(\Omega)$ for every 
	$\varphi\in H_0^1(\Omega)$, we see that $u_h(x)\,\varphi(x)/|x|^2$ and $|x|^\theta\,(u_h(x))^q\,\varphi(x)$ belong to $L^1(\Omega)$. 
	%	We show that  $u_h\in H^1_{\rm loc}(\Omega)$  
	%	satisfies \eqref{eq1} in $\mathcal D'(\Omega)$ (in the sense of distributions), that is, for every $\varphi\in C_c^1(\Omega)$,  
	%	\begin{equation} 
	%	\label{dist}
	%	\int_\Omega \nabla u_h\cdot \nabla \varphi\,dx-\int_\Omega \frac{\lambda}{|x|^2}u_h\,\varphi\,dx+\int_\Omega
	%	|x|^{\theta}u_h^q\,\varphi\,dx=0.
	%	\end{equation}   	  	
	%	 We always have $\Theta<(N-2)/2$ in Case $(\mathcal M_1)$. %In what follows, we establish \eqref{dist}, which will complete the proof.   
	Using Lemma~\ref{regu} when $0\leq \Theta<(N-2)/2$ and Remark~\ref{r-regu} when $\Theta<0$ (where $g(r)=r^{\Theta} $ for $r>0$), we obtain 
	\begin{equation} \label{grad} |\nabla u_h(x)|\leq C |x|^{-\Theta-1}\quad \mbox{for every } 0<|x|<r_0,\end{equation}
	where $r_0>0$ is small. This implies that 
	$u_h\in H^1_{\rm loc}(\Omega)$. 
	For every $\varepsilon\in (0,1)$ small, 
	let $w_\varepsilon$ be a non-decreasing and smooth function on $(0,\infty)$ such that 
	$$ \left\{\begin{aligned}
	& w_\varepsilon=0&& \mbox{on }(0,\varepsilon],&\\
	& 0<w_\varepsilon(r)<1 && \mbox{for every }r\in (\varepsilon,2\varepsilon),&\\ 
	& w_\varepsilon=1 && \mbox{on } [2\varepsilon,\infty).&
	\end{aligned}\right.
	$$
	
	Let $ \varphi\in C_c^1(\Omega)$ be arbitrary. Using $\varphi\,w_\varepsilon\in C^1_c(\Omega\setminus\{0\})$ as a test function in the equation \eqref{eq1} satisfied by $u_h$, we deduce that 
	\begin{equation} 
	\label{disto}
	\int_\Omega w_\varepsilon\, \nabla u_h\cdot \nabla \varphi\,dx-\int_\Omega \frac{\lambda}{|x|^2}u_h\,\varphi\,w_\varepsilon\,dx+\int_\Omega
	|x|^{\theta}u_h^q\,\varphi\,w_\varepsilon\,dx=-J_\varepsilon,
	\end{equation} 
	where for every $\varepsilon>0$, we define $J_\varepsilon$ as follows
	$$ J_\varepsilon:=
	\int_\Omega \varphi\,\nabla u_h\cdot \nabla w_\varepsilon\,dx=
	\int_{\{\varepsilon<|x|<2\varepsilon\}} \varphi(x)\,
	\frac{w_\varepsilon'(|x|)}{|x|}\,\nabla u_h(x)\cdot x\,dx.
	$$  
	With the estimate in \eqref{grad} and relying on the assumption $\Theta<(N-2)/2$, it is easy to see that $J_\varepsilon\to 0$ as $\varepsilon\to 0$. Hence, by letting $\varepsilon\to 0$ in \eqref{disto}, we infer that \eqref{dist} holds for every $ \varphi\in C_c^1(\Omega)$. 
	
	\vspace{0.2cm}
	We now assume that $h=0$ in \eqref{boun1}. Instead of $u_h$, we use the notation $u_0$. We claim that $u_0\in H_0^1(\Omega)$ and \eqref{dist} holds  
	for each $\varphi\in H_0^1(\Omega)$. 
	
	Let $\delta>0$ be small such that $\overline{B_\delta(0)}\subset \Omega$. 
	We define 
	$\omega=\Omega\setminus \overline{B_\delta(0)}$. 
	By the classical trace theory, there exists a function 
	$f\in H^1(\omega)\cap C(\overline \omega)$ such that $f=u_0$ on $\partial \omega$. By the classical regularity theory, we infer that $u_0\in H^1(\omega)$. This proves that $u_0\in H^1(\Omega)$. Since $u_0=0$ on $\partial\Omega$, we conclude that $u_0\in H_0^1(\Omega)$. 
	
	Now, for each $\varphi\in H_0^1(\Omega)$, there exists a sequence $\{\varphi_n\}_{n\geq 1}$ in $C^1_c(\Omega)$ such that $\varphi_n\to \varphi$ in $H^1(\Omega)$ as $n\to \infty$. Then, by the Hardy inequality in \eqref{har}, 
	$$ \int_\Omega \frac{(\varphi_n-\varphi)^2}{|x|^2}\,dx\to  0\quad \mbox{as } n\to \infty. 
	$$
	Hence, by H\"older's inequality, as $n\to \infty$, we find that   
	\begin{equation} \label{sec} \int_\Omega \frac{u_0}{|x|^2}\,\varphi_n\,dx\to \int_\Omega \frac{u_0}{|x|^2}\,\varphi\,dx\,\, \mbox{and}\,\,  \int_\Omega
	|x|^{\theta}u_0^q\,\varphi_n\,dx\to \int_\Omega
	|x|^{\theta}u_0^q\,\varphi\,dx.\end{equation} 
	For the second limit in \eqref{sec} we also use that $\sup_{x\in \Omega}|x|^{\theta+2} (u_0(x))^{q-1}<\infty$ in view of \eqref{kop} and $u_0=0$ on $\partial\Omega$. 
	Since \eqref{dist} holds with $\varphi_n$ instead of $\varphi$, by letting $n\to \infty$, we extend  \eqref{dist} to every $\varphi\in H_0^1(\Omega)$.  This finishes Step~3. 
	
	\vspace{0.2cm}
	\noindent 	{\bf Step~4.} If $h\equiv 0$, then \eqref{boun1} has no solution in Case $(\mathcal M_1)$ and Case $(\mathcal N)$.  
	
	\vspace{0.2cm}
	\noindent	{\em Proof of Step~4.}	
	%is again a consequence of Theorem~\ref{mar}: 
	Suppose that $u$ is a solution of \eqref{boun1} with $h=0$ in Case $(\mathcal M_1)$ or Case~$(\mathcal N)$. Then, in Case $(\mathcal M_1)$ we have $\Theta<p_-$ so that  Theorem~\ref{mar} implies that $\lim_{|x|\to 0} |x|^{p_-} \,u(x)=0$. Hence, for every $\varepsilon>0$, we obtain that $u(x)\leq \varepsilon |x|^{-p_-}$ for $|x|>0$ close to zero and for every $x\in \partial\Omega$. The comparison principle (in Lemma~\ref{compa}) gives that $0<u(x)\leq \varepsilon |x|^{-p_-}$ for every $x\in \Omega\setminus\{0\}$. By letting $\varepsilon\to 0$, we arrive at $u\equiv 0$ in $\Omega\setminus\{0\}$, which is a contradiction. 
	
	The same argument applies in Case $(\mathcal N)$ whenever $\theta=\theta_-\leq \theta_+$ since from Theorem~\ref{th-cd}, we have  $\lim_{|x|\to 0} |x|^{p_-} \,u(x)=0$. In the remaining situation of Case $(\mathcal N)$, namely, when $\theta_-<\theta\leq \theta_+$ (relevant for  $\lambda<\lambda_H$), we use that $\lim_{|x|\to 0} |x|^{p_+} u(x)=0$. The above ideas work with $p_+$ instead of $p_-$ so that $0<u(x)\leq \varepsilon |x|^{-p_+}$ for every $x\in \Omega\setminus \{0\}$ and every $\varepsilon>0$. Hence, we again obtain a contradiction by letting $\varepsilon\to 0$. 
	This completes the proof of Step~4 and, hence, of Lemma~\ref{nonz}. 
\end{proof}

\begin{remark} \label{arg}
	Despite the similarity revealed in Theorem~\ref{mar} between Case $(\mathcal U)$ and Case $(\mathcal M_1)$, the difference between these comes to the fore when considering the problem \eqref{boun1} with $h=0$, which has no solutions in Case $(\mathcal M_1)$. We give an alternative proof  using the Hardy inequality. 
	Assume by contradiction that problem \eqref{boun1} with $h=0$ on $\partial\Omega$ has a solution $u_0$ in Case~$(\mathcal M_1)$. 
	Observe that $\Theta<(N-2)/2$ holds in Case~$(\mathcal M_1)$. In view of Theorem~\ref{mar}, the argument used in Step~3 for Case $(\mathcal U)$ applies to Case $(\mathcal M_1)$. 
	Hence, $u_0\in H_0^1(\Omega)$ and by taking $\varphi=u_0$ in \eqref{dist}, we get
	\begin{equation} \label{apt}  
	\int_\Omega |\nabla u_0|^2\,dx-\int_\Omega \frac{\lambda}{|x|^2}u_0^2\,dx+\int_\Omega
	|x|^{\theta}u_0^{q+1}\,dx=0.
	\end{equation}
	Since $u_0>0$ in $\Omega\setminus \{0\}$ and $\lambda\leq \lambda_H$ in Case $(\mathcal M_1)$, the Hardy inequality in \eqref{har} yields a contradiction. This proof breaks down in Case $(\mathcal U)$ 
	when $\lambda>\lambda_H$. 
\end{remark}

\begin{lemma} \label{m2c}
	Let 
	$h\in C(\partial \Omega)$ be a non-negative 
	function. Assume Case $(\mathcal M_2)$. Then, for each $\gamma\in (0,\infty]$ (also for $\gamma=0$ if $h\not\equiv 0$), problem \eqref{boun1}, subject to 
	\begin{equation} \label{var} \lim_{|x|\to 0} \frac{u(x)}{\Phi_\lambda^+(x)}=\gamma 
	\end{equation} has a unique solution $u_h^{(\gamma)}$. For $\gamma=\infty$, we have  
	$\lim_{|x|\rightarrow 0}u_h^{(\gamma)}(x)/U_0(x)= 1$. 
	\begin{enumerate} 
		\item[{\rm (1)}] If $h\not\equiv 0$ on $\partial\Omega$, then the set of all solutions of \eqref{boun1} is $\{u_h^{(\gamma)}:\ 0\leq \gamma\leq \infty\}$, where 
		for $\gamma=0$ we have $\lim_{|x|\to 0} |x|^{p_-}u_h^{(\gamma)}(x)\in (0,\infty)$.
		\item[{\rm (2)}]  
		If $h\equiv 0$ on $\partial\Omega$, then
		the set of all solutions of \eqref{boun1} is $\{u_h^{(\gamma)}:\ 0<\gamma\leq \infty\}$. 
	\end{enumerate}
\end{lemma}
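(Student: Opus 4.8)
The organizing idea is Theorem~\ref{th-cd}: in Case $(\mathcal M_2)$ every solution of \eqref{boun1} obeys exactly one of the behaviors {\bf (A)}, {\bf (B)}, {\bf (C)}, and because $p_-<p_+<\Theta$ these correspond respectively to $\gamma=0$, $\gamma\in(0,\infty)$ and $\gamma=\infty$ in \eqref{var}: indeed $U_0/\Phi_\lambda^+\to\infty$ and $\Phi_\lambda^-/\Phi_\lambda^+\to0$ as $|x|\to0$. Thus the classification reduces to existence and uniqueness of $u_h^{(\gamma)}$ for each admissible $\gamma$. Uniqueness for $\gamma\in(0,\infty)$ and $\gamma=\infty$ is immediate: two solutions realizing the same $\gamma$ satisfy $u_1/u_2\to1$ as $|x|\to0$ (from {\bf (B)}, respectively from $u_i/U_0\to1$), so comparing $u_1$ with $(1\pm\varepsilon)u_2$ through Lemma~\ref{compa} and letting $\varepsilon\to0$ gives $u_1=u_2$.

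For $\gamma=\infty$ the maximal solution is exactly the one built in Step~2 of Lemma~\ref{nonz}, valid for every $h\ge0$; it satisfies $\liminf_{|x|\to0}u_h/U_0>0$ (by the sub-solution $z_\delta$ of Lemma~\ref{con2}), whence $\lim_{|x|\to0}u_h/U_0=1$ by Proposition~\ref{lim-o}. For $\gamma\in(0,\infty)$ I would solve \eqref{eq1} on the annuli $\Omega_k=\Omega\setminus\overline{B_{1/k}(0)}$ with $u=h$ on $\partial\Omega$ and $u=\gamma\Phi_\lambda^+$ on $\partial B_{1/k}(0)$, using the global estimate $u_k\le C_0|x|^{-\Theta}$ for compactness and extracting a $C^1_{\mathrm{loc}}$-limit via Lemma~\ref{regu} and Remark~\ref{r-regu}. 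The exact rate is then forced by trapping the limit between two barriers behaving like $\gamma\Phi_\lambda^+$ near zero: the function $\gamma\Phi_\lambda^+$ is itself a super-solution (it is $\mathbb L_\lambda$-harmonic, so $-\mathbb L_\lambda(\gamma\Phi_\lambda^+)+|x|^\theta(\gamma\Phi_\lambda^+)^q\ge0$), while $\gamma\Phi_\lambda^+-M|x|^{-s}$ is a local sub-solution for $M$ large and any $s$ with $\max\{p_-,\,qp_+-\theta-2\}<s<p_+$; this interval is nonempty exactly because $\theta>\theta_+$ in Case $(\mathcal M_2)$, which is what lets the subdominant correction dominate the nonlinearity near zero.

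The case $\gamma=0$ is handled through the minimal solution. When $h\not\equiv0$ I would obtain it as the increasing limit of solutions of \eqref{eq1} on $\Omega_k$ with $u=h$ on $\partial\Omega$ and $u=0$ on $\partial B_{1/k}(0)$; the limit $u_h^{(0)}$ is positive by the strong maximum principle, and since it is dominated near zero by $u_h^{(\gamma)}$ for every $\gamma>0$ it must be of type {\bf (A)}, i.e. $\lim_{|x|\to0}|x|^{p_-}u_h^{(0)}\in(0,\infty)$. Uniqueness reduces to showing that $u_h^{(0)}$ is the only type-{\bf (A)} solution: any such $u_A$ satisfies $u_A\le u_h^{(\gamma)}$ for all $\gamma>0$ by comparison (using $\Phi_\lambda^-\ll\Phi_\lambda^+$), hence $u_A\le u_h^{(0)}$, while the reverse inequality follows from a sliding argument with the super-solutions $t\,u_A$, $t\ge1$ (these are super-solutions since $(t^q-t)|x|^\theta u_A^q\ge0$), the strong maximum principle excluding interior contact and forcing the optimal dilation $t=1$. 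When $h\equiv0$ no type-{\bf (A)} solution exists: for $\lambda<\lambda_H$ such a solution lies in $H^1_0(\Omega)$ (as $u\sim c|x|^{-p_-}$ with $p_-<(N-2)/2$, the integrability going through as in Step~3 of Lemma~\ref{nonz} and Remark~\ref{arg}), so testing the equation with $u$ and invoking the strict Hardy inequality \eqref{har} gives $u\equiv0$, a contradiction; the borderline $\lambda=\lambda_H$ is dealt with by a direct comparison since then $u\sim c|x|^{-(N-2)/2}$. These facts assemble into the solution sets (1) and (2), together with the monotonicity $u_h^{(\gamma)}\le u_h^{(\gamma')}$ for $\gamma<\gamma'$.

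The principal difficulties live at the two ends of the $\gamma$-range. For $\gamma\in(0,\infty)$ the delicate construction is the sub-solution capturing the precise $\gamma\Phi_\lambda^+$ rate rather than a merely weaker singularity, which is where $\theta>\theta_+$ enters essentially through the admissible exponent $s$. For $\gamma=0$ the genuinely hard points are the uniqueness of the minimal (type-{\bf (A)}) solution via the sliding argument and, for $h\equiv0$, the nonexistence in the borderline case $\lambda=\lambda_H$, which escapes the Hardy-inequality route; more generally $\lambda=\lambda_H$ forces logarithmic barriers throughout and must be tracked separately at every step.
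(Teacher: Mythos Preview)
Your treatment of $\gamma=\infty$ coincides with the paper's. For $\gamma\in(0,\infty)$ your route differs: the paper takes boundary data $\gamma\Phi_\lambda^{+}+C\Phi_\lambda^{-}$ on $\partial B_{1/k}$ (yielding a \emph{decreasing} sequence bounded above by $\gamma\Phi_\lambda^{+}+C\Phi_\lambda^{-}$) and secures the lower barrier not by an explicit sub-solution but by importing a radial solution $\underline u_\gamma$ on $B_1(0)$ from \cite{Cmem} and comparing $\underline u_\gamma\le C\Phi_\lambda^{-}+u_{h,k}^{(\gamma)}$. Your explicit sub-solution $\gamma\Phi_\lambda^{+}-M|x|^{-s}$ is a nice self-contained alternative for $\lambda<\lambda_H$, and you correctly identify that $\lambda=\lambda_H$ requires a separate logarithmic construction.

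The real problem is at $\gamma=0$. Your uniqueness argument has a gap in both halves. The implication ``$u_A\le u_h^{(\gamma)}$ for all $\gamma>0$, hence $u_A\le u_h^{(0)}$'' presupposes that $u_h^{(\gamma)}\downarrow u_h^{(0)}$ as $\gamma\to 0$, which you have not shown (and which is essentially what you are trying to prove). The sliding argument with $t\,u_A$ does not close either: the infimum $t^*$ could be realized asymptotically at the origin, where both $t^*u_A$ and $u_h^{(0)}$ behave like $c\,|x|^{-p_-}$ with matching constants, and the strong maximum principle gives no contradiction there. Similarly, your nonexistence argument for $h\equiv 0$ via the Hardy inequality is needlessly heavy and, as you note, leaves $\lambda=\lambda_H$ unresolved.

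All of this is dissolved by a single observation you are missing: an \emph{additive} comparison with $\varepsilon\Phi_\lambda^{+}$. If $u$ and $v$ are two solutions of \eqref{boun1} with $\lim_{|x|\to 0}u/\Phi_\lambda^{+}=0$, then for every $\varepsilon>0$ one has $u\le \varepsilon\Phi_\lambda^{+}$ near $0$, and since $\varepsilon\Phi_\lambda^{+}+v$ is a super-solution equal to $h\ge u$ on $\partial\Omega$, Lemma~\ref{compa} gives $u\le \varepsilon\Phi_\lambda^{+}+v$ in $\Omega\setminus\{0\}$; letting $\varepsilon\to 0$ yields $u\le v$, and symmetry gives uniqueness. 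Taking $v\equiv 0$ (when $h\equiv 0$) gives nonexistence directly. This works verbatim for $\lambda=\lambda_H$ and replaces both your sliding step and the $H_0^1$/Hardy detour.
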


\begin{proof} We first show that for  $\gamma=\infty$, 
	problem \eqref{boun1}, subject to \eqref{var}, has a unique solution 
	$u_h^{(\gamma)}$. By Theorem~\ref{th-cd}, any such solution
	$u_h^{(\gamma)}$ must satisfy $\lim_{|x|\to 0} u_h^{(\gamma)}(x)/U_0(x)=1$.  
	To construct $u_h^{(\gamma)}$, we proceed exactly like in Step~2 in the proof of Lemma~\ref{nonz} in Case $(\mathcal U)$ replacing $u_h$ by $u_h^{(\gamma)}$. Thus, we obtain a solution $u_h^{(\gamma)}$ of \eqref{boun1} satisfying $u_h^{(\gamma)}(x)\geq z_\delta(x)$ for every $0<|x|<\delta$. Recall that $z_\delta$ is given by Lemma~\ref{con2}. It follows that  
	$\liminf_{|x|\to 0} u_h^{(\gamma)}(x)/U_0(x)>0$. Then, by Proposition~\ref{lim-o}, we conclude that $\lim_{|x|\to 0} u_h^{(\gamma)}(x)/U_0(x)=1$ as desired. The uniqueness of such a solution is a simple consequence of Lemma~\ref{compa} (see Step~1 in the proof of Lemma~\ref{nonz}). 
	
	\vspace{0.2cm}
	Let $\gamma\in (0,\infty)$ be arbitrary. We prove that \eqref{boun1}, subject to \eqref{var} has a solution $u_h^{(\gamma)}$, which is unique by Lemma~\ref{compa}.  
	To construct $u_h^{(\gamma)}$, we follow the argument in the proof of \cite[Lemma~5.6]{Cmem}. For the reader's convenience, we give the details. 
	%For $\gamma=0$, we further assume that $h\not\equiv 0$ on $\partial\Omega$. 
	From Lemma~\ref{compa} and \cite[Propositions~3.1(c) and ~3.4(c)]{Cmem}, there exists a unique positive (radial) solution $\underline{u}_\gamma$ for the  problem
	%for the problem  
	\begin{equation} \label{ati}
	\left\{\begin{aligned}
	& -\Delta u-\frac{\lambda}{|x|^2}u+|x|^{\theta}u^q=0 &&  \mbox{in } B_1(0)\setminus\{0\},&\\
	& u=1 && \mbox{on } \partial B_1(0),&\\
	& \lim_{|x|\to 0} \frac{u(x)}{\Phi_\lambda^+(x)}=\gamma.&
	\end{aligned} \right.\end{equation}   
	Fix $\delta\in (0,1)$ such that $\overline{B_\delta(0)}\subset \Omega$. 
	Choose a constant $C>0$  large such that 
	\begin{equation} \label{top} C \Phi_\lambda^- \geq h\ \mbox{on } \partial \Omega \quad \mbox{and}\quad
	C\,\Phi_\lambda^-\geq \underline{u}_\gamma\ \ \mbox{on }\  \partial B_\delta(0).\end{equation}     
	Based on the second inequality in \eqref{top}, we derive from Lemma~\ref{compa} that 
	\begin{equation} \label{nul} \underline{u}_\gamma\leq \gamma\,\Phi_\lambda^++C\,\Phi_\lambda^-\quad \mbox{in } B_\delta(0).\end{equation}  
	
	Let $k_0>1/\delta$. For
	each integer $k\geq k_0$, the boundary value problem
	\begin{equation} \label{hati}
	\left\{\begin{aligned}
	& -\Delta u-\frac{\lambda}{|x|^2}u+|x|^{\theta}u^q=0&& \mbox{in }\Omega_k:=\Omega\setminus \overline{B_{1/k}(0)},&\\
	& u=h && \partial\Omega,\\
	& u=\gamma\, \Phi_\lambda^++C\,\Phi_\lambda^- && \mbox{on }\partial B_{1/k}(0),&\\
	& u>0 && \mbox{in } \Omega_k
	\end{aligned} \right.\end{equation} 
	has a unique solution $u^{(\gamma)}_{h,k}\in C^2(\Omega_k)\cap C(\overline{\Omega_k})$.  
	%Then, \eqref{hati} has a unique . 
	By Lemma~\ref{compa}, we see that  
	\begin{equation} \label{plis} u^{(\gamma)}_{h,k+1}\leq u^{(\gamma)}_{h,k}\leq \gamma\, \Phi_\lambda^++C\,\Phi_\lambda^-\quad \mbox{in } \Omega_k\ \  \text{for every } k\geq k_0.\end{equation}
	As before, we obtain that, up to a subsequence, $u^{(\gamma)}_{h,k}$ converges to $u_h^{(\gamma)}$ in $C_{\text{loc}}^{1}(\Omega\setminus \{0\})$ as $k\to \infty$, where $u_h^{(\gamma)}$ is a non-negative solution of \eqref{boun1}. 
	
	%\vspace{0.2cm}
	%When $\gamma\in (0,\infty)$, 
	From our choice of $C$, \eqref{nul} and Lemma~\ref{compa}, we infer that 
	$$ \underline{u}_\gamma(x)\leq C\,\Phi_\lambda^-(x)+ u_{h,k}^{(\gamma)}(x)\quad \mbox{for every }\  1/k\leq |x|\leq \delta.  
	$$ 
	By letting $k\to \infty$ and using that $\lim_{|x|\to 0} \underline{u}_\gamma(x)/\Phi_\lambda^+(x)=\gamma$, we arrive at $$ \liminf_{|x|\to 0} \frac{u_h^{(\gamma)}(x)}{\Phi_\lambda^+(x)}\geq \gamma .$$
	Moreover, from \eqref{plis}, we find that $\limsup_{|x|\to 0}u_h^{(\gamma)}(x)/\Phi_\lambda^+(x)\leq \gamma $. Hence,  
	$u_h^{(\gamma)}$ is a solution of \eqref{boun1}, subject to \eqref{var}. 
	%Such a solution is unique from Lemma~\ref{compa}.
	
	\vspace{0.2cm}
	We next take $\gamma=0$ and assume that $h\not\equiv 0$ on $\partial\Omega$. Let $C>0$ be large so that the first inequality in \eqref{top} holds. As before, we consider
	\eqref{hati} (with $\gamma=0$) and obtain a non-negative solution $u_h^{(\gamma)}$ of \eqref{boun1} satisfying
	\eqref{plis}.    
	It follows that 
	$\lim_{|x|\to 0}u_h^{(\gamma)}(x)/\Phi_\lambda^+(x) =0$ and since $h\not\equiv 0$ on $\partial\Omega$, by the strong maximum principle, we infer that $u_h^{(\gamma)}>0$ in $\Omega$. By Theorem~\ref{th-cd}, we have $\lim_{|x|\to 0} |x|^{p_-}u_h^{(\gamma)}(x)\in (0,\infty)$. Moreover, there exists a unique solution for \eqref{boun1}, subject to \eqref{var} with $\gamma=0$. Indeed, if $u_0$ and $U_0$ are two such solutions, then by Lemma~\ref{compa}, we have $u_0\leq \varepsilon\,\Phi_\lambda^+  + U_0$ in $\Omega\setminus \{0\}$ for arbitrary $\varepsilon>0$. It follows that $u_0\leq U_0$ in $\Omega\setminus \{0\}$. By interchanging $u_0$ and $U_0$, we conclude that $u_0\equiv U_0$ in $\Omega\setminus \{0\}$.  
	
	\vspace{0.2cm}
	To finish the proof of Lemma~\ref{m2c}, it remains to 
	show that if $h\equiv 0$ on $\partial\Omega$ and $\gamma=0$, then \eqref{boun1}, subject to \eqref{var}, has no solutions. Indeed, if such a solution $u$ were to exist, then for every $\varepsilon>0$, we would have $u(x)\leq \varepsilon\, \Phi_\lambda^+(x)$ for every $x\in \Omega\setminus\{0\}$, which would lead to $u\equiv 0$ in $\Omega$ by letting $\varepsilon\to 0$. This is a contradiction. The proof of Lemma~\ref{m2c} is now complete. 
\end{proof}

\section{Proof of Theorem~\ref{mult1}} \label{multi}

As explained in Section~\ref{mainr}, by using the Kelvin transform, it suffices to establish the assertions of Theorem~\ref{mult1} in Case $(\mathcal M_2)$, which we state below.   

\begin{proposition}[Multiplicity, Case $(\mathcal M_2)$] \label{prol} Let $\Omega=\mathbb R^N$ and 
	Case~$(\mathcal M_2)$ hold, that is, $\lambda\leq \lambda_H$ and $\theta>\theta_+$. 
	Then, for every $\gamma\in (0,\infty)$,
	problem \eqref{eq1}, subject to 	$  \lim_{|x|\to 0} u(x)/\Phi_\lambda^{+}(x)=\gamma$, 
	has a unique solution $u_\gamma$,  
	%	\begin{equation} \label{sis} \tag{$P_\gamma$}
	%	\left\{ 
	%	\begin{aligned}
	%	& -\mathbb L_\lambda u+|x|^{\theta}u^q=0\quad  
	%	\mbox{in }\mathbb R^N\setminus \{0\},&\\
	%	%& u>0 \quad \mbox{in } \mathbb R^N\setminus \{0\},&\\
	%	& \lim_{|x|\to 0} \frac{u(x)}{\Phi_\lambda^{+}(x)}=\gamma&
	%	\end{aligned}
	%	\right.
	%	\end{equation}
	which is radially symmetric and satisfies \eqref{kop2}, that is, $\lim_{|x|\to \infty} u_\gamma(x)/U_0(x)=1$. In addition, we have 
	\begin{equation} \label{aha} 
	\left\{ \begin{aligned}
	& u_\gamma\leq u_{\gamma'}\leq U_0\quad \mbox{ in } \mathbb R^N\setminus \{0\}\quad \mbox{for every } 0<\gamma<\gamma'<\infty,\\
	& U_0(x)=\lim_{\gamma\to \infty} u_{\gamma}(x)\quad \mbox{for each }x\in \mathbb R^N\setminus \{0\}. 
	\end{aligned}\right.
	\end{equation} 
	The set $\mathcal S$ of all positive solutions of \eqref{eq1} is 
	$ \mathcal S=\{U_0\}\cup \{u_\gamma:\ \gamma\in (0,\infty) \} 
	$. 	
\end{proposition}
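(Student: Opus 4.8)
The plan is to construct the family $\{u_\gamma\}$ by exhausting $\mathbb R^N$ with balls, and then to promote a few applications of the comparison principle into the uniqueness, symmetry, monotonicity and completeness statements. For fixed $\gamma\in(0,\infty)$ and each $R>1$ I would apply Lemma~\ref{m2c} (that is, Theorem~\ref{est}(4)) on $\Omega=B_R(0)$ with boundary datum $h\equiv U_0|_{\partial B_R(0)}$ to obtain a unique solution $u_R^{(\gamma)}$ of \eqref{eq1} in $B_R(0)\setminus\{0\}$ with $\lim_{|x|\to0}u_R^{(\gamma)}/\Phi_\lambda^+=\gamma$. Since $U_0$ is a solution that agrees with $u_R^{(\gamma)}$ on $\partial B_R(0)$ and dominates it near zero (because $U_0/\Phi_\lambda^+\to\infty$ while $u_R^{(\gamma)}/\Phi_\lambda^+\to\gamma<\infty$, as $\Theta>p_+$), Lemma~\ref{compa} gives $u_R^{(\gamma)}\le U_0$, and comparing $u_{R'}^{(\gamma)}$ with $u_R^{(\gamma)}$ on $B_R(0)$ for $R<R'$ shows the family is non-increasing in $R$. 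A uniform positive lower bound near zero is supplied by the fixed-domain solution $v_\gamma^{(0)}$ on $B_{\delta_0}(0)$ with zero datum and the same $\gamma$ (Theorem~\ref{est}(4)(b)), which satisfies $v_\gamma^{(0)}\le u_R^{(\gamma)}$ on $B_{\delta_0}(0)$ by comparison. Passing $R\to\infty$, using the a~priori bound of Corollary~\ref{lem1} and the $C^1_{\mathrm{loc}}$-estimates of Lemma~\ref{regu}/Remark~\ref{r-regu}, I obtain a positive solution $u_\gamma=\lim_{R\to\infty}u_R^{(\gamma)}$ of \eqref{eq1} in $\mathbb R^N\setminus\{0\}$ with $\lim_{|x|\to0}u_\gamma/\Phi_\lambda^+=\gamma$ (here $\limsup\le\gamma$ follows from $u_\gamma\le u_R^{(\gamma)}$ and $\liminf\ge\gamma$ from $u_\gamma\ge v_\gamma^{(0)}$). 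Being a solution on $\Omega_\infty=\mathbb R^N$, $u_\gamma$ satisfies \eqref{kop2} by Theorem~\ref{mar2}.

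\smallskip
Uniqueness of the solution carrying datum $\gamma$ follows exactly as in the proof of Theorem~\ref{uniq}: two such solutions are asymptotically equivalent both as $|x|\to0$ (each $\sim\gamma\Phi_\lambda^+$) and as $|x|\to\infty$ (each $\sim U_0$ by Theorem~\ref{mar2}), so trapping one between $(1\mp\varepsilon)$ times the other on a large annulus and applying Lemma~\ref{compa} (using that for $q>1$ the functions $(1+\varepsilon)v$ and $(1-\varepsilon)v$ are respectively a super- and a sub-solution) forces equality as $\varepsilon\to0$. Rotation invariance of \eqref{eq1} then shows $u_\gamma\circ\mathcal R$ solves the same problem for every $\mathcal R\in O(N)$, so $u_\gamma$ is radial by uniqueness; the ordering $u_\gamma\le u_{\gamma'}\le U_0$ for $\gamma<\gamma'$ is the identical comparison argument. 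Consequently $\gamma\mapsto u_\gamma$ is increasing and bounded above by $U_0$, hence $\widetilde u:=\lim_{\gamma\to\infty}u_\gamma\le U_0$ is a solution; from $\widetilde u\ge u_\gamma$ one gets $\lim_{|x|\to0}\widetilde u/\Phi_\lambda^+=\infty$, so Theorem~\ref{th-cd}(ii) excludes cases {\bf (A)} and {\bf (B)} and leaves {\bf (C)}, i.e.\ $\widetilde u/U_0\to1$ near zero; combined with $\widetilde u/U_0\to1$ at infinity, the argument of Theorem~\ref{uniq} gives $\widetilde u=U_0$.

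\smallskip
It remains to prove completeness, which is the crux. An arbitrary solution $u$ of \eqref{eq1} in $\mathbb R^N\setminus\{0\}$ is in particular a local solution near zero, so Theorem~\ref{th-cd}(ii) leaves only {\bf (A)}, {\bf (B)} or {\bf (C)}; case {\bf (B)} gives $u=u_\gamma$ by uniqueness and case {\bf (C)} gives $u=U_0$ by the previous paragraph. The entire difficulty is to rule out {\bf (A)}, the feature special to the global problem. Suppose $w_0$ were a solution with $\lim_{|x|\to0}|x|^{p_-}w_0=A\in(0,\infty)$. Because $p_-<\Theta$ in Case $(\mathcal M_2)$, one has $w_0/U_0\to0$ near zero and $w_0/U_0\to1$ at infinity (Theorem~\ref{mar2}), so comparison against the super-solution $(1+\varepsilon)U_0$ yields $w_0\le U_0$. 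Applying the scaling $T_\mu$, which maps solutions of \eqref{eq1} on $\mathbb R^N$ to solutions, fixes $U_0$, and is order preserving, gives $T_\mu w_0\le U_0$ for all $\mu>0$; yet $T_\mu w_0\sim A\mu^{\Theta-p_-}|x|^{-p_-}$ near zero with $\Theta-p_->0$, so $W:=\sup_{\mu>0}T_\mu w_0\le U_0$ obeys $\liminf_{|x|\to0}|x|^{p_-}W\ge A\mu^{\Theta-p_-}$ for every $\mu$, whence $|x|^{p_-}W\to\infty$. This contradicts $W\le U_0=\ell^{1/(q-1)}|x|^{-\Theta}$, for which $|x|^{p_-}W\le\ell^{1/(q-1)}|x|^{p_--\Theta}\to0$. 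Hence {\bf (A)} is impossible and $\mathcal S=\{U_0\}\cup\{u_\gamma:\ \gamma\in(0,\infty)\}$. I expect this scaling-trapping step to be the main obstacle, as it is exactly where the global character of the domain — which is absent in the local analysis of Theorem~\ref{est} — becomes decisive.
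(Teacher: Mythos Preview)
Your construction of $u_\gamma$, the uniqueness, symmetry, monotonicity, and the identification $\lim_{\gamma\to\infty}u_\gamma=U_0$ are all correct (the paper uses zero boundary data and an increasing family rather than $U_0$-data and a decreasing one, but both work). The genuine problem is in your elimination of alternative~{\bf (A)}.

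Your scaling argument contains a fatal sign error. In Case~$(\mathcal M_2)$ one has $\Theta>p_+\ge p_-$, so $p_--\Theta<0$ and hence $|x|^{p_--\Theta}\to\infty$ as $|x|\to 0$, \emph{not} to $0$. Thus the bound $|x|^{p_-}W\le \ell^{1/(q-1)}|x|^{p_--\Theta}$ gives no contradiction whatsoever with $|x|^{p_-}W\to\infty$. In fact one can check directly that $\sup_{\mu>0}T_\mu w_0=U_0$ pointwise (because $T_\mu w_0(x)=\mu^\Theta w_0(\mu x)\to U_0(x)$ as $\mu\to\infty$, using $w_0\sim U_0$ at infinity), so no information is gained this way.

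The paper's argument exploits instead that $p_+$ sits strictly between $p_-$ and $\Theta$. A putative type-{\bf (A)} solution has $|x|^{p_-}u\to A\in(0,\infty)$ at zero, hence $u/\Phi_\lambda^+\to 0$; and by Theorem~\ref{mar2} together with $\Theta>p_+$, also $|x|^{p_+}u\to 0$ at infinity. For $\lambda<\lambda_H$ the super-solution $V_\varepsilon(x)=\varepsilon|x|^{-p_+}$ therefore dominates $u$ at both ends, so Lemma~\ref{compa} yields $u\le \varepsilon|x|^{-p_+}$ in $\mathbb R^N\setminus\{0\}$ for every $\varepsilon>0$, forcing $u\equiv 0$. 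For $\lambda=\lambda_H$ (where $p_+=p_-$ and $\Phi_\lambda^+$ involves a logarithm) one uses instead $V_\varepsilon(x)=\varepsilon R_\varepsilon^{-1}|x|^{-p_-}\log(R_\varepsilon/|x|)+\varepsilon|x|^{-p_-}$ on $B_{R_\varepsilon}(0)$ with $R_\varepsilon\to\infty$. Replacing your scaling step with this barrier argument completes the proof.
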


\begin{proof} We split the proof into three steps. The first one deals with the existence and uniqueness of the solution $u_\gamma$ of \eqref{eq1}, subject to 	$  \lim_{|x|\to 0} u(x)/\Phi_\lambda^{+}(x)=\gamma$.  
	The claim of \eqref{aha} is proved in Step~2.  
	The asymptotic behavior of $u_\gamma$ in \eqref{kop2} follows from Theorem~\ref{mar2}, while the radial symmetry of $u_\gamma$ follows from uniqueness and radial symmetry of the problem \eqref{eq1}. In Step~3 we show that $U_0$ and $\{u_\gamma:\ \gamma\in (0,\infty)\}$ make up all the positive solutions of \eqref{eq1}.

	\vspace{0.2cm}
	\noindent {\bf Step 1.} For every $\gamma\in (0,\infty)$, there exists a unique solution $u_\gamma$ for problem \eqref{eq1}, subject to $  \lim_{|x|\to 0} u(x)/\Phi_\lambda^{+}(x)=\gamma$. 
	
	\vspace{0.2cm}
	\noindent {\em Proof of Step~1.}
	We fix $\gamma\in (0,\infty) $ and show 
	that any solutions 
	$u$ and $\widetilde{u}$ of \eqref{eq1}, subject to $  \lim_{|x|\to 0} u(x)/\Phi_\lambda^{+}(x)=\gamma$, must coincide. 
	Let $\varepsilon>0$ be arbitrary. 
	
	We define
	$ v_\varepsilon(x):=\left(1+\varepsilon\right) \widetilde{u}(x)+\varepsilon\, \Phi_\lambda^-(x)$ for every $x\in \mathbb R^N\setminus \{0\}$. 
	It is easy to check that $v_\varepsilon$ satisfies
	$$  -\mathbb L_\lambda( v_\varepsilon) + |x|^{\theta}\left(v_\varepsilon(x)\right)^q\geq 0\quad \mbox{for every } x\in \mathbb R^N\setminus \{0\}. 
	$$ 	
	From $\lim_{|x|\rightarrow 0} u(x)/\widetilde{u}(x)=1$, there exists $r_\varepsilon>0$ small such that 
	$$ u(x)\leq \left(1+\varepsilon\right) \widetilde{u}(x)\leq v_\varepsilon(x)\quad\mbox{for every  } 0<|x|\leq r_\varepsilon.$$ 
	The assumption $\theta>\theta_+$ yields that $\Theta>p_+\geq p_-$.  
	Hence, by Corollary~\ref{lem1}, there exists $R_\varepsilon>0$ large such that 
	$ u(x)\leq \varepsilon |x|^{-p_-}\leq v_\varepsilon(x)$ for every $|x|\geq R_\varepsilon$. Then, by Lemma~\ref{compa} with 
	$\omega:=\{x\in \mathbb R^N:\ r_\varepsilon<|x|<R_\varepsilon\}$, we find  
	that
	\begin{equation}\label{wee}
	u(x)\leq v_\varepsilon(x)\quad \text{for every }x\in \mathbb R^N\setminus \{0\}. 
	\end{equation}
	For $x\in \mathbb{R}^N\setminus \{0\}$ fixed, 
	letting $\varepsilon\to 0$ in \eqref{wee}, we obtain that $u\leq \widetilde u$ in $\mathbb{R}^N\setminus \{0\}$.
	By interchanging $u$ and $\widetilde{u}$, we conclude that $u\equiv \widetilde{u}$ in $\mathbb{R}^N\setminus \{0\}$.

	\vspace{0.2cm}
	We next prove that for arbitrary $\gamma\in (0,\infty)$, there exists a solution $u_\gamma$ of problem \eqref{eq1}, subject to $ \lim_{|x|\to 0} u(x)/\Phi_\lambda^{+}(x)=\gamma$. 
	By Theorem~\ref{est}, 
	%from \cite[Lemma 5.6]{Cmem} 
	for every $\gamma\in (0,\infty)$ and $k\geq 1$, there exists a unique solution $u_{k,\gamma}$ for the problem
	\begin{equation}\label{11}
	\left\{
	\begin{aligned}
	& -\mathbb L_\lambda(u)+|x|^{\theta}u^q=0&& \text{in } B_{k}(0)\setminus \{0\},&\\
	& \lim_{|x|\to 0}\frac{u(x)}{\Phi_\lambda^+(x)}=\gamma,\\
	&u=0&& \text{on }\partial B_k(0),&\\
	& u>0 && \text{in } B_k(0)\setminus\{0\}.&
	\end{aligned}
	\right.
	\end{equation} 
	%\todo{in and on aligned above?}
	Moreover, $u_{k,\gamma}$ is radially symmetric in $B_k(0)\setminus \{0\}$. 
	%Now, $U_0$ is a positive solution of \eqref{eq1} and satisfies $\lim_{|x|\to 0 }U_0(x)/\Phi_\lambda^+(x)=\infty$ since $q<q^*$.  
	Recall that $U_0$ in \eqref{node} is a solution of \eqref{eq1} and $\lim_{|x|\to 0 }U_0(x)/\Phi_\lambda^+(x)=\infty$ since we are in Case $(\mathcal M_2)$.  %$\theta>\theta_+$ implies  $\Theta>p_+$, we see that 
	By the comparison principle in Lemma~\ref{compa}, we have   
	\begin{equation} \label{fol}
	0<u_{k,\gamma}\leq u_{k+1,\gamma}\leq U_0\quad  \text{in } B_k(0)\setminus \{0\}. 
	\end{equation}
	By a standard argument, we deduce that, up to a subsequence, $u_{k,\gamma}\to u_\gamma$ in $C_{\text{loc}}^{1}(\mathbb{R}^N\setminus \{0\})$ as $k\to \infty$. Moreover, $u_\gamma$ is a radial solution of \eqref{eq1}. 
	From \eqref{fol} and  
	$ \lim_{|x|\to 0}u_{k,\gamma}(x)/\Phi_\lambda^+(x)=\gamma $ for each $k\geq 1$, 
	we find that 
	\begin{equation} \label{mib}
	\liminf_{|x|\to 0}\frac{u_\gamma(x)}{\Phi_\lambda^+(x)}\geq \gamma.\end{equation} 
	%	To conclude that $u_\gamma$ is a positive solution of \eqref{sis}, it remains to show that \begin{equation} \label{iop} \limsup_{|x|\to 0}\frac{u_\gamma(x)}{\Phi_\lambda^+(x)}\leq \gamma .\end{equation} 
	For every $\varepsilon>0$, we define $ w_\varepsilon(x):= 
	\left(\gamma+\varepsilon\right) \Phi_\lambda^+(x) + U_0(1)\, \Phi_\lambda^-(x) $ for every $0<|x|\leq 1$. Since $w_\varepsilon$ is a super-solution of \eqref{eq1} in $B_1(0)\setminus \{0\}$ such that $u_{k,\gamma}(x)\leq w_\varepsilon(x)$ whenever $|x|=1$, by Lemma~\ref{compa}, we deduce that 
	\begin{equation} \label{hak}
	u_{k,\gamma}(x)\leq w_\varepsilon(x)\quad \text{for every }0<|x|<1\ \ \mbox{and all } k\geq 1. 
	\end{equation}
	For $x\in B_1(0)\setminus\{0\}$ fixed, we 
	have $
	u_\gamma(x)\leq w_\varepsilon(x)$ by letting $k\rightarrow \infty$ in \eqref{hak}. This proves that $\limsup_{|x|\to 0} u_\gamma(x)/\Phi_\lambda^+(x)\leq \gamma+\varepsilon$. Letting $\varepsilon\to 0$, 
	jointly with \eqref{mib}, we find that 
	$ \lim_{|x|\to 0} u_\gamma(x)/\Phi_\lambda^{+}(x)=\gamma$.
	This ends the proof of Step~1. 
	
	%		\vspace{0.2cm}	
	%	\noindent {\bf Step~2.} For each $\gamma\in (0,\infty)$,
	%	problem \eqref{sis} has a unique positive solution. 
	
	%	\vspace{0.2cm} 
	%	\noindent {\em Proof of Step~2.} 
	
	%Hence, \eqref{sis} has a unique positive solution. 

	\vspace{0.2cm} 
	\noindent {\bf Step 2.} Proof of \eqref{aha}.
	
	\vspace{0.2cm}
	\noindent {\em Proof of Step~2.} 
	Since $\lim_{|x|\to 0} U_0(x)/\Phi_\lambda^+(x)=\infty$ and $u_\gamma$ satisfies \eqref{kop2}, from Lemma~\ref{compa} we deduce that  
	\begin{equation} \label{unt} u_\gamma\leq u_{\gamma'}\leq U_0\quad \mbox{ in } \mathbb R^N\setminus \{0\}\quad \mbox{for every } 0<\gamma<\gamma'<\infty. 
	\end{equation}
	To show that $u_\gamma\to U_0$ pointwise in $\mathbb R^N\setminus \{0\}$ as $\gamma\to \infty$, it suffices to show that for every sequence $\{\gamma_j\}_{j\geq 1}$ with $\lim_{j\to \infty} \gamma_j=\infty$, there exists a subsequence of $\{u_{\gamma_j}\}$ (relabeled $\{u_{\gamma_j}\}$) that converges pointwise to $U_0$ in $ \mathbb R^N\setminus \{0\}$. Without loss of generality, we can assume that $\{\gamma_j\}_{j\geq 1}$ is increasing to $\infty$. Then, using \eqref{unt} as before, we find that, up to a subsequence, $\{u_{\gamma_j}\}_{j\geq 1}$ converges in $C^1_{\rm loc}(\mathbb R^N\setminus \{0\})$ to a positive solution $u_\infty$ of \eqref{eq1}, which satisfies $\lim_{|x|\to 0} u_\infty(x)/\Phi_\lambda^+(x)=\infty$. Then, Theorem~\ref{th-cd} gives that 
	$\lim_{|x|\to 0}u_\infty(x)/U_0(x)=1$. 
	With the same argument as in Step~1, we infer that $u_\infty\equiv U_0$ in $\mathbb R^N\setminus \{0\}$. This finishes Step~2. 
	
	\vspace{0.2cm} 
	\noindent {\bf Step 3.}  The set $\mathcal S$ of all solutions of \eqref{eq1} is 
	$ \mathcal S=\{U_0\}\cup \{u_\gamma:\ \gamma\in (0,\infty) \}. 
	$ 	 
	
	\vspace{0.2cm}
	\noindent {\em Proof of Step~3.}
	In Case $(\mathcal M_2)$, given any solution $u$ of \eqref{eq1}, exactly one of the alternatives {\bf (A)}, {\bf (B)} and {\bf (C)} in 
	Theorem~\ref{th-cd} holds. The alternative {\bf (C)} arises from the case
	$\lim_{|x|\to 0} u(x)/\Phi_\lambda^+(x)=\infty$. The fact that $U_0$ is the only positive solution of \eqref{eq1} in the situation {\bf (C)} proceeds exactly as in Step~1 for the uniqueness of $u_\gamma$. 
	Option {\bf (B)} corresponds to $u_\gamma$ with $\gamma\in (0,\infty)$. 
	
	We show that option ${\bf (A)}$ is not viable, that is, 
	problem \eqref{eq1} has no solutions satisfying
	\begin{equation} \label{sio} \lim_{|x|\to 0}\frac{u(x)}{\Phi_\lambda^+(x)}=0.\end{equation} 
	From the assumption $\theta>\theta_+$, we have $\Theta>p_+$. Hence, Theorem~\ref{mar2} gives that 
	\begin{equation} \label{mus}\lim_{|x|\to \infty} |x|^{p_+}\,u(x)=0.\end{equation} 
	
	We reach a contradiction by showing that $u=0$ in $\mathbb R^N\setminus\{0\}$. 
	Let $\varepsilon>0$ be arbitrary. 
	Given the definition of $\Phi_\lambda^+$ in \eqref{fundam}, we distinguish two cases: 
	
	\vspace{0.2cm}
	(i)
	Let 
	$\lambda<\lambda_H$. If $  V_\varepsilon(x):=\varepsilon\, \Phi_\lambda^+(x)=\varepsilon \,|x|^{-p_+}$ for each $x\in \mathbb R^N\setminus \{0\}$, then   
	$\lim_{|x|\to 0} u(x)/V_\varepsilon(x)=0$ in view of \eqref{sio}. 
	For every $x\in  \mathbb R^N\setminus \{0\}$, we have 
	\begin{equation} \label{mut} -\mathbb L_\lambda(V_\varepsilon)+|x|^\theta\,(V_\varepsilon)^q=|x|^\theta\,(V_\varepsilon)^q\geq 0. 
	\end{equation}
	Since $\lim_{|x|\to \infty} u(x)/V_\varepsilon(x)=0$, 
	by Lemma~\ref{compa}, we have 
	$$
	u(x)\leq V_\varepsilon(x)=\varepsilon\,|x|^{-p_+}\quad \mbox{for all } x\in \mathbb{R}^N\setminus \{0\}.$$ 
	By letting $\varepsilon\to 0$ we arrive at $u\equiv 0$ in $\mathbb R^N\setminus \{0\}$, which is a contradiction.
	
	\vspace{0.2cm}
	(ii) Let $\lambda=\lambda_H$, that is, $p_-=p_+=(N-2)/2$. 
	Then, from \eqref{mus}, for every $\varepsilon>0$ fixed, there exists 
	$R_\varepsilon>0$ large such that $u(x)\leq \varepsilon\,|x|^{-p_-}$ for every $|x|\geq R_\varepsilon$. 
	For every $0<|x|\leq R_\varepsilon$, we  define 
	$$ V_{\varepsilon}(x):= \frac{\varepsilon}{R_\varepsilon}|x|^{-p_-}\log \left(\frac{R_\varepsilon}{|x|}\right)+\varepsilon\, |x|^{-p_-}. 
	$$
	Using the definition of $\Phi_\lambda^+$ in \eqref{fundam}, 
	we remark that $V_\varepsilon$ satisfies \eqref{mut} for every $0<|x|< R_\varepsilon$. 
	From \eqref{sio}, we observe that $\lim_{|x|\to 0} u(x)/V_\varepsilon(x)=0$   
	and also $u(x)\leq \varepsilon |x|^{-p_-}=V_\varepsilon(x)$ for every $|x|=R_\varepsilon$. Thus, Lemma~\ref{compa} gives that 
	\begin{equation} \label{cool} u(x)\leq V_\varepsilon(x)\quad \mbox{for every } 0<|x|\leq R_\varepsilon. \end{equation}
	Now, $R_\varepsilon\to \infty$ as $\varepsilon\searrow 0$ so that for every fixed $x\in \mathbb R^N\setminus\{0\}$, we have $0<|x|<R_\varepsilon$ for every $\varepsilon>0$ small enough. Hence, by letting $\varepsilon\to 0$ in \eqref{cool}, we arrive at $u\equiv 0$ in $\mathbb R^N\setminus \{0\}$. %We have thus reached a contradiction. 
	
	\vspace{0.2cm}
	Since in both cases we find a contradiction, 
	we conclude that in Case $(\mathcal M_2)$, equation \eqref{eq1} has no positive solutions with $\lim_{|x|\to 0} |x|^{p_-}\,u(x)\in (0,\infty)$. This finishes the proof of Step~3.

	\vspace{0.2cm}
	From Step~1--Step 3, we conclude the proof of Proposition~\ref{prol}. 
\end{proof}

\section{Proof of Theorem~\ref{non-e}} \label{sec5}

Let $\Omega=\mathbb R^N$. 
We show that \eqref{eq1} has no solutions in Case $(\mathcal N)$, that is, if $\lambda\leq \lambda_H$ and $ \theta_-\leq \theta\leq \theta_+$. 
Suppose by contradiction that $u$ is a solution of \eqref{eq1}. By Theorem~\ref{mar2}, we have
\begin{equation} \label{nun} 
\lim_{|x|\to\infty} |x|^{p_-}\,u(x)=0,
\end{equation}
whereas at zero, we derive from Theorem~\ref{th-cd} that 
\begin{equation} \label{nun2}
\begin{aligned}
& \lim_{|x|\to 0} |x|^{p_-} \,u(x)=0&& \mbox{ if } \theta=\theta_-,&\\
&  \lim_{|x|\to 0} |x|^{\Theta} \,u(x)=0 && \mbox{ if }\  \theta_-<\theta\leq \theta_+.&
\end{aligned}
\end{equation}
Let $\varepsilon>0$ be arbitrary. 
For all $x\in \mathbb R^N\setminus \{0\}$, we define 
$$ V_\varepsilon(x):=\left\{
\begin{aligned}
& \varepsilon |x|^{-p_-} && 
\mbox{ if } \theta=\theta_-,&\\
&  \varepsilon |x|^{-\Theta}+\varepsilon |x|^{-p_-}
&& \mbox{ if }\  \theta_-<\theta\leq \theta_+.& 
\end{aligned}
\right.$$ 
Recall that $\Phi_\lambda^-(x)=|x|^{-p_-}$ satisfies $\mathbb L_\lambda(\Phi_\lambda^-)=0$ in $\mathbb R^N\setminus \{0\}$. On the other hand, the assumption 
$\theta_-\leq \theta\leq \theta_+$ implies that $\ell\leq 0$, where $\ell$ is given by \eqref{sigma}, which means that $\varepsilon |x|^{-\Theta}$ is a super-solution of \eqref{eq1} since  
$$ -\mathbb L_\lambda(|x|^{-\Theta})=-\ell\, |x|^{-\Theta-2}\geq 0\quad \mbox{in }\mathbb R^N\setminus \{0\}. $$
Consequently, for every $\theta_-\leq \theta\leq \theta_+$, we have 
$$ -\mathbb L_\lambda(V_\varepsilon)+|x|^\theta\,(V_\varepsilon)^q\geq 0\quad \mbox{for every } x\in  \mathbb R^N\setminus \{0\}.
$$
From \eqref{nun} and \eqref{nun2}, we find that 
$$\lim_{|x|\to 0} u(x)/V_\varepsilon(x)=0\quad \text{and}\quad 
\lim_{|x|\to \infty} u(x)/V_\varepsilon(x)=0.$$ 
Hence, by the comparison principle in Lemma~\ref{compa}, we have 
\begin{equation}\label{ineqq}
u(x)\leq V_{\varepsilon}(x)\quad \text{for every }x\in \mathbb R^N\setminus \{0\}. 
\end{equation}
Fixing $x\in \mathbb{R}^N\setminus \{0\}$ and letting $\varepsilon\to 0$ in the above inequality, we see that $u\equiv 0$ in $\mathbb R^N\setminus \{0\}$, which is a 
contradiction with the positivity of $u$. This completes the proof of Theorem~\ref{non-e}.

\section{Comments and remarks} \label{com-rem}

In the study of equation \eqref{eq1}, it is customary to assume $\theta>-2$. Singular solutions arise precisely in this range, see Remark~\ref{obs}, and thus the methods available in the literature for analyzing problem \eqref{eq1} are often adapted to $\theta>-2$. In this paper, we treat \eqref{eq1} for every $\lambda,\theta\in \mathbb R$, by distinguishing four cases: $(\mathcal U)$, $(\mathcal M_1)$, $(\mathcal M_2)$ and $(\mathcal N)$. For the latter three cases, we compare $\theta$ with the critical exponents $\theta_\pm$ defined in \eqref{tat}. Hence, we need to carefully design our techniques based on sub/super-solutions so that they work in a unified manner, without depending on any behavior of the solutions near zero.  

To facilitate a comparison of our results with previous ones in the literature and to highlight the influence of the Hardy potential $\lambda\,|\cdot|^{-2}$ in \eqref{eq1}, we express the findings of this paper by  
%for $\Omega=\mathbb R^N$ in \eqref{eq1}, 
treating $\theta\in \mathbb R$ and $q>1$ as fixed parameters and letting $\lambda$ vary in $\mathbb R$. From this perspective, when $\Omega=\mathbb R^N$ in \eqref{eq1} we gain a threshold for $\lambda$, denoted by $\lambda^*=\lambda^*(N,\theta,q)$ defined as follows
\begin{equation} \label{crit} %q_{N,\theta}:=\frac{N+2\,\theta+2}{N-2},\quad 
\lambda^*:=\lambda_H-\left(\frac{N-2}{2}-
\frac{\theta+2}{q-1}\right)^2=\Theta\left(N-2-\Theta\right).\end{equation}
For problem \eqref{eq1} with $\Omega=\mathbb R^N$ (or for \eqref{boun}), a real number will be called a {\em threshold} for $\lambda$ if the existence of solutions  happens if and only if $\lambda$ is (strictly) greater than that number. 

Note that $\lambda^*\leq \lambda_{H}$ with equality if and only if $q=q_{N,\theta}$, where
we define %$q_{N,\theta}$ is  defined by  
\begin{equation} \label{qnt}  q_{N,\theta}:=\frac{N+2\,\theta+2}{N-2}.
\end{equation}
Clearly, $q_{N,\theta}>1$ if and only if  $\theta>-2$, which means that for $\theta\leq -2$, the structure of the solutions of \eqref{eq1} is less varied.  
Theorems~\ref{uniq}, \ref{mult1} and \ref{non-e} show that \eqref{eq1} with $\Omega=\mathbb R^N$ admits solutions if and only if $\lambda>\lambda^*$. If, moreover, $\lambda>\lambda_H$, then $U_0$ is the unique solution of \eqref{eq1} in $\mathbb R^N\setminus \{0\}$. On the other hand, when $\lambda^*<\lambda\leq \lambda_H$ (for $q\not=q_{N,\theta}$), then \eqref{eq1} with $\Omega=\mathbb R^N$ has infinitely many solutions and all its solutions are radially symmetric. Their asymptotic behavior near zero and at infinity is specified  in  Corollary~\ref{ou1}. 
%and in Corollary~\ref{ou2} for $\theta\leq -2$. 

\begin{corollary} \label{ou1} Fix $\theta\in \mathbb R$ and $q>1$. Let  $\lambda\in \mathbb R$ be arbitrary. 
	Problem \eqref{eq1} with $\Omega=\mathbb R^N$ has solutions if and only if $\lambda>\lambda^*$ and, in this case, the structure of all solutions is as follows:
	\begin{enumerate}
		\item[{\rm 1.}] If $\lambda>\lambda_H$, then $U_0$ in \eqref{node} is the only solution of problem \eqref{eq1}. 	 
		\item[{\rm 2.}] If $\lambda^*<\lambda\leq \lambda_H$ (whenever $q\not=q_{N,\theta}$), then 	
		all the solutions are radially symmetric and the set of all solutions of problem \eqref{eq1} is given by $$U_0\cup \{U_{\gamma,q,\lambda}:\ \gamma\in (0,\infty) \},$$ where we have 
		\begin{enumerate}
			\item[{\rm (a)}] If $q<q_{N,\theta}$ only for $\theta>-2$, then $U_{\gamma,q,\lambda}$ is the unique solution of problem \eqref{eq1} that satisfies 
			\begin{equation} \label{bio} \lim_{|x|\to 0} \frac{U_{\gamma,q,\lambda}(x)}{\Phi_\lambda^+(x)}=
			\gamma\in (0,\infty)\quad 
			\mbox{and} \quad  \lim_{|x|\to \infty} \frac{U_{\gamma,q,\lambda} (x)}{U_0(x)}=1.\end{equation} 
			\item[{\rm (b)}] If $q>\max\{q_{N,\theta},1\}$, then $U_{\gamma,q,\lambda}$ is the unique solution of problem \eqref{eq1} that satisfies
			\begin{equation} \label{m11}  \lim_{|x|\to 0} \frac{U_{\gamma,q,\lambda} (x)}{U_0(x)}=1 
			\ 
			\mbox{and} \  	
			\lim_{|x|\to \infty}   \frac{|x|^{N-2}\,U_{\gamma,q,\lambda}(x)}{\Phi_\lambda^+(1/|x|)}=\gamma\in (0,\infty).\end{equation} 
		\end{enumerate}     	
	\end{enumerate}		
\end{corollary}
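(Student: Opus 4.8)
The plan is to view Corollary~\ref{ou1} as a reformulation of Theorems~\ref{uniq}, \ref{mult1} and \ref{non-e}: with $\theta$ and $q$ held fixed, the four cases $(\mathcal U)$, $(\mathcal M_1)$, $(\mathcal M_2)$, $(\mathcal N)$ become consecutive ranges of the single parameter $\lambda$, and the corollary simply reads off the conclusion attached to each range. The starting point is the algebraic identity
\begin{equation*}
\ell = \Theta^2-(N-2)\Theta+\lambda = \lambda-\Theta(N-2-\Theta) = \lambda-\lambda^*,
\end{equation*}
which is immediate from \eqref{sigma} and \eqref{crit}. In particular $U_0$ in \eqref{node} is well-defined exactly when $\lambda>\lambda^*$, and $\ell\le 0$ (the defining inequality of Case $(\mathcal N)$) is equivalent to $\lambda\le\lambda^*$.

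First I would build the dictionary between the $(\lambda\ \text{vs}\ \lambda_H,\ \theta\ \text{vs}\ \theta_\pm)$ description and the $(\lambda\ \text{vs}\ \lambda^*,\ q\ \text{vs}\ q_{N,\theta})$ description. Since $\Theta=(\theta+2)/(q-1)$ is a fixed number, the defining inequalities $\theta<\theta_-$ and $\theta>\theta_+$ translate into $\Theta<p_-$ and $\Theta>p_+$; because $p_\pm$ are the roots of the quadratic $\ell=0$ in $\Theta$, for $\lambda\le\lambda_H$ one gets $\Theta<p_-$ iff $\Theta<(N-2)/2$ together with $\lambda>\lambda^*$, and $\Theta>p_+$ iff $\Theta>(N-2)/2$ together with $\lambda>\lambda^*$, while $p_-\le\Theta\le p_+$ iff $\lambda\le\lambda^*$. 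It then remains to convert $\Theta$ against $(N-2)/2$ into $q$ against $q_{N,\theta}$: a direct computation shows $\Theta=(N-2)/2$ iff $q=q_{N,\theta}$, and since $q\mapsto(\theta+2)/(q-1)$ is monotone on $(1,\infty)$ with sign governed by $\theta+2$, the regime $\Theta>(N-2)/2$ (Case $(\mathcal M_2)$) corresponds to $1<q<q_{N,\theta}$ (which forces $\theta>-2$), whereas $\Theta<(N-2)/2$ (Case $(\mathcal M_1)$) corresponds to $q>\max\{q_{N,\theta},1\}$.

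With the dictionary in hand, each assertion follows by citing the appropriate theorem in its regime. For $\lambda>\lambda_H$ we are in Case $(\mathcal U)$ and Theorem~\ref{uniq} gives that $U_0$ is the only solution. For $\lambda^*<\lambda\le\lambda_H$ with $q\ne q_{N,\theta}$, Theorem~\ref{mult1} produces the radially symmetric family $\{U_{\gamma,q,\lambda}:\gamma\in(0,\infty)\}$ alongside $U_0$; I would identify the profile \eqref{bio} in Case $(\mathcal M_2)$ from behaviors \eqref{gio} near zero and \eqref{kop2} at infinity, and the profile \eqref{m11} in Case $(\mathcal M_1)$ from \eqref{kop} near zero and \eqref{e} at infinity. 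For $\lambda\le\lambda^*$ we are in Case $(\mathcal N)$ and Theorem~\ref{non-e} yields non-existence. The threshold statement that solutions exist if and only if $\lambda>\lambda^*$ follows by assembling these three regimes, and I would treat the degenerate value $q=q_{N,\theta}$ separately: there $\lambda^*=\lambda_H$, the intermediate range is empty, and only Cases $(\mathcal U)$ and $(\mathcal N)$ survive, still giving existence precisely for $\lambda>\lambda^*$.

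I do not expect a deep obstacle, since the result is organizational; the delicate point is simply the exact case-matching at the boundaries. Concretely, I would verify that the sign of $\theta+2$ is correctly coupled with the monotonicity of $\Theta$ in $q$ so that every $q>1$ is assigned to exactly one of (a) and (b), that the parenthetical restriction to $\theta>-2$ in (a) is forced by $q_{N,\theta}>1$, and that the boundary $q=q_{N,\theta}$ (equivalently $\Theta=(N-2)/2$, $\theta=\theta_-=\theta_+$, $\lambda^*=\lambda_H$) is consistently placed in the non-existence regime when $\lambda=\lambda^*$.
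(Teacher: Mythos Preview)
Your proposal is correct and matches the paper's approach: the paper does not give a separate proof of Corollary~\ref{ou1} but presents it in Section~\ref{com-rem} as a direct reformulation of Theorems~\ref{uniq}, \ref{mult1} and \ref{non-e}, obtained by translating the four cases $(\mathcal U)$, $(\mathcal M_1)$, $(\mathcal M_2)$, $(\mathcal N)$ into ranges of $\lambda$ relative to $\lambda^*$ and $\lambda_H$ via the identity $\ell=\lambda-\lambda^*$ and the equivalence $\Theta\lessgtr (N-2)/2 \Leftrightarrow q\gtrless q_{N,\theta}$. Your dictionary and boundary checks (including the degenerate case $q=q_{N,\theta}$ where $\lambda^*=\lambda_H$) are exactly what is needed.
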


\begin{remark}
	For $\lambda=0$, $q>1$ and $\theta>-2$, Corollary~\ref{ou1} shows the following: 
	
	(I) Problem \eqref{eq1} with $\Omega=\mathbb R^N$ has no solutions if $q\geq (N+\theta)/(N-2)$; this fact follows from \cite[Theorem~1.3]{CD2010} (with $p=2$ there) or from the celebrated paper of Brezis and V\'eron \cite{brv} when $\theta=0$, using also that every solution tends to zero at infinity by Corollary~\ref{lem1}.
	%this fact was known for $\theta=0$ due to the work of Vazquez and V\'eron  \cite{vv}, which applies to .
	
	(II) Problem \eqref{eq1} with $\Omega=\mathbb R^N$ has infinitely many solutions, all radially symmetric, if $1<q<(N+\theta)/(N-2)$; moreover, in this case,  the set of all solutions is $U_0\cup \{U_{\gamma,q}:\ \gamma\in (0,\infty) \}$, where for each $\gamma\in (0,\infty)$, the (radially symmetric) solution $U_{\gamma,q}$ of \eqref{eq1} satisfies 
	$$ \lim_{|x|\to 0} |x|^{N-2}\,U_{\gamma,q}(x)=\gamma\quad \mbox{and}\quad 
	\lim_{|x|\to \infty}\, \frac{U_{\gamma,q}(x)}{U_0(x)}=1.$$ 
	Hence, for $\lambda=\theta=0$ in Case (II) above, we
	regain Theorem 3.2 of Friedman and V\'eron \cite{frv} (with $p=2$ there) and also reveal the precise rate at which $U_{\gamma,q}$ vanishes at infinity, namely 
	$$ \lim_{|x|\to \infty} |x|^{2/(q-1)}\,U_{\gamma,q}=\frac{2}{q-1}\left(\frac{2}{q-1}-N+2\right).$$ 
\end{remark} 

\begin{remark} \label{obs}
	(i) For $\theta=-2$ in Corollary~\ref{ou1}, we have $\lambda^*=0$ and $U_0\equiv \lambda^{1/(q-1)}$. 
	If $\theta=-2$ (respectively, $\theta<-2$), then whenever it exists, every solution of problem \eqref{eq1} with $\Omega=\mathbb R^N$ is radially symmetric and converges to $\lambda^{1/(q-1)}$ as $|x|\to 0$ (respectively, vanishes at zero precisely like $U_0$). 
	
	(ii) On the other hand, for $\theta>-2$ every solution of  problem \eqref{eq1} in $\mathbb R^N\setminus \{0\}$, whenever it exists (see Corollary~\ref{ou1}), is radially symmetric, blows-up at zero and vanishes at infinity. 
\end{remark}

We now review Theorem~\ref{est} on the
structure of all solutions for the problem 
\begin{equation} \label{bun} 
\left\{\begin{aligned}
& -\Delta u-\frac{\lambda}{|x|^2}u+|x|^{\theta}u^q=0\ \  \mbox{in }\Omega\setminus \{0\},&\\
& u=h\ \  \mbox{on } \partial\Omega,\quad \ u>0 \ \ \mbox{in } \Omega\setminus \{0\},&
%& u>0 && \mbox{in } \Omega,& 
\end{aligned}\right.
\end{equation} 
where $\Omega\subset \mathbb R^N$ is a smooth bounded domain containing the origin and $h$ is a non-negative and continuous function on $\partial\Omega$. We separate $\theta>-2$ from $\theta\leq -2$ to underscore the changes that occur when going from one case to the other. In Corollaries~\ref{ouk3} and \ref{ouk2} we assume $h=0$ in \eqref{bun}. 
When $h\not\equiv 0$ in \eqref{bun}, 
the structure of all solutions is discussed in Corollaries~\ref{oa2} and \ref{oa1}. 

\begin{corollary} \label{ouk3}
	Let $\theta> -2$, $q>1$ and $\lambda\in\mathbb R$. Define $q_{N,\theta}$ as in \eqref{qnt}. Suppose that $h=0$ in problem \eqref{bun}.  
	\begin{enumerate}
		\item[{\rm 1.}] If $\lambda>\lambda_H$, then problem \eqref{bun} has a unique solution $u_0$ and, moreover, we have
		$
		\lim_{|x|\to 0} u_0(x)/U_0(x)=1.$
		
		\item[{\rm 2.}] If $q\geq q_{N,\theta}$, then for every $\lambda\leq \lambda_H$, problem \eqref{bun} has no solutions.
		
		\item[{\rm 3.}] Let $1<q<q_{N,\theta}$.  
		\begin{enumerate}
			\item[{\rm (a)}] If $\lambda^*<\lambda\leq \lambda_H$, then
			all the solutions of problem  \eqref{bun} are given by
			$ \{u^{(\gamma)}:\ 0<\gamma\leq \infty \}$, where $u^{(\gamma)}$ is the unique solution of \eqref{bun}, subject to $\lim_{|x|\to 0} u(x)/\Phi_\lambda^+(x)=\gamma$. When $\gamma=\infty$, we have 
			$$\lim_{|x|\to 0} \frac{u^{(\gamma)}(x)}{U_0(x)}=1.$$
			\item[{\rm (b)}] If $\lambda\leq \lambda^*$, then problem
			\eqref{bun} has no solutions. 	
		\end{enumerate}
	\end{enumerate}
\end{corollary}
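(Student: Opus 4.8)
The plan is to read off Corollary~\ref{ouk3} from Theorem~\ref{est}, invoking Theorem~\ref{mar} (and Theorem~\ref{th-cd}) for the profiles near zero. Since $\theta>-2$ and $q>1$ are now fixed and only $\lambda$ varies, the whole task reduces to a dictionary that matches each hypothesis on $(\lambda,q)$ with one of the four cases $(\mathcal U)$, $(\mathcal M_1)$, $(\mathcal M_2)$, $(\mathcal N)$.

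First I would record the elementary identities. From \eqref{crit} we have $\lambda^*=\lambda_H-\big(\tfrac{N-2}{2}-\Theta\big)^2$ with $\Theta=\tfrac{\theta+2}{q-1}$, so $\lambda^*\leq \lambda_H$, with equality exactly when $\Theta=\tfrac{N-2}{2}$, i.e. when $q=q_{N,\theta}$ (here $\theta>-2$ is used to ensure $\Theta>0$). Because $\Theta$ is strictly decreasing in $q$, the inequality $q<q_{N,\theta}$ is equivalent to $\Theta>\tfrac{N-2}{2}$ and $q>q_{N,\theta}$ to $\Theta<\tfrac{N-2}{2}$. For $\lambda\leq \lambda_H$, with $p_\pm=\tfrac{N-2}{2}\pm\sqrt{\lambda_H-\lambda}$ and $\theta_\pm=p_\pm(q-1)-2$, the condition $\lambda>\lambda^*$ means $\big|\Theta-\tfrac{N-2}{2}\big|>\sqrt{\lambda_H-\lambda}$; this splits as $\Theta>p_+$ (Case $(\mathcal M_2)$) when $\Theta>\tfrac{N-2}{2}$, and as $\Theta<p_-$ (Case $(\mathcal M_1)$) when $\Theta<\tfrac{N-2}{2}$. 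Conversely, $\lambda\leq \lambda^*$ forces $p_-\leq\Theta\leq p_+$, that is $\theta_-\leq\theta\leq\theta_+$, which is Case $(\mathcal N)$.

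With this dictionary in hand I would dispatch each item. For item~1, $\lambda>\lambda_H$ is Case $(\mathcal U)$, so Theorem~\ref{est}(1) (with $h\equiv 0$) gives existence and uniqueness of $u_0$, and Theorem~\ref{mar} gives $u_0(x)/U_0(x)\to 1$ as $|x|\to 0$. For item~2, $q\geq q_{N,\theta}$ gives $\Theta\leq\tfrac{N-2}{2}$, so for every $\lambda\leq\lambda_H$ one has $\Theta<p_-$ or $p_-\leq\Theta\leq p_+$, placing us in Case $(\mathcal M_1)$ or Case $(\mathcal N)$; in both, Theorem~\ref{est}(3) yields non-existence for $h\equiv 0$. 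For item~3 we have $\Theta>\tfrac{N-2}{2}$: if $\lambda^*<\lambda\leq\lambda_H$ we are in Case $(\mathcal M_2)$, and Theorem~\ref{est}(4)(b) produces exactly the family $\{u^{(\gamma)}:0<\gamma\leq\infty\}$, the $\gamma=\infty$ profile being the last line of Theorem~\ref{est}(4); if $\lambda\leq\lambda^*$ we are in Case $(\mathcal N)$, and Theorem~\ref{est}(3) again gives non-existence.

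Since all the analytic content is already carried by Theorem~\ref{est}, there is no real obstacle; the only care is at the endpoint $\lambda=\lambda^*$. The point to check cleanly is that when $\Theta>\tfrac{N-2}{2}$ and $\lambda=\lambda^*$ one has $p_+=\Theta$, i.e. $\theta=\theta_+$, so we sit on the boundary of Case $(\mathcal N)$ rather than in Case $(\mathcal M_2)$; this guarantees that item~3(b) correctly asserts non-existence up to and including $\lambda=\lambda^*$. The analogous endpoint check at $q=q_{N,\theta}$, where $\lambda^*=\lambda_H$ and $\Theta=\tfrac{N-2}{2}=p_\pm$ at $\lambda=\lambda_H$, confirms Case $(\mathcal N)$ for all $\lambda\leq\lambda_H$, consistent with item~2.
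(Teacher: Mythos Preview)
Your proposal is correct and matches the paper's approach: Corollary~\ref{ouk3} is stated in Section~\ref{com-rem} without an explicit proof, as a reformulation of Theorem~\ref{est} obtained by fixing $\theta>-2$ and $q>1$ and translating the conditions on $\lambda$ into the cases $(\mathcal U)$, $(\mathcal M_1)$, $(\mathcal M_2)$, $(\mathcal N)$---precisely the dictionary you wrote down. Your endpoint checks at $\lambda=\lambda^*$ and $q=q_{N,\theta}$ are the only places requiring care, and you handled them correctly.
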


\begin{corollary} \label{ouk2}
	Fix $\theta\leq -2$ and $q>1$. Let $\lambda\in \mathbb R$ be arbitrary. Assume that $h=0$ in problem \eqref{bun}.  	
	\begin{enumerate}
		\item[{\rm 1.}] If $\lambda>\lambda_H$, then problem \eqref{bun} has a unique solution $u_0$ and, moreover, we have
		$\lim_{|x|\to 0} u_0(x)/U_0(x)=1$. 	
		\item[{\rm 2.}] If $\lambda\leq \lambda_H$, then problem \eqref{bun} has no solutions.   
	\end{enumerate}
\end{corollary}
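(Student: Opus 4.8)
The plan is to derive both statements as direct consequences of Theorem~\ref{est} and Theorem~\ref{mar}, after pinning down which of the four cases $(\mathcal U)$, $(\mathcal M_1)$, $(\mathcal M_2)$, $(\mathcal N)$ governs the situation when $\theta\le -2$. The single structural observation that drives everything is that for $\theta\le -2$ and $q>1$ we have $\Theta=(\theta+2)/(q-1)\le 0$, while the critical exponent $\theta_+$ from \eqref{tat} always satisfies $\theta_+>-2$: indeed, for $\lambda\le\lambda_H$ we have $\theta_+=p_+(q-1)-2$ with $p_+=(N-2)/2+\sqrt{\lambda_H-\lambda}>0$, so that $p_+(q-1)>0$. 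Hence $\theta\le -2<\theta_+$ rules out Case $(\mathcal M_2)$ outright whenever $\lambda\le\lambda_H$.

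First I would treat part~1. The hypothesis $\lambda>\lambda_H$ places us in Case $(\mathcal U)$, which by definition covers every $\theta\in\mathbb R$, in particular every $\theta\le -2$. Theorem~\ref{est}(1) then yields a unique solution $u_0$ of \eqref{bun} with $h=0$, and Theorem~\ref{mar} (valid in Case $(\mathcal U)$ for all $\theta$, the restriction $\theta>-2$ having been removed) gives the asymptotic $\lim_{|x|\to 0}u_0(x)/U_0(x)=1$. This completes part~1 with no further work, since the solution of \eqref{bun} is in particular a solution of \eqref{eq1} with $\Omega=\Omega_0$.

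For part~2, assume $\lambda\le\lambda_H$. By the observation above we are not in Case $(\mathcal M_2)$. Comparing $\theta$ with $\theta_-$ from \eqref{tat}, we land in Case $(\mathcal M_1)$ when $\theta<\theta_-$ and in Case $(\mathcal N)$ when $\theta_-\le\theta$ (the remaining constraint $\theta\le\theta_+$ being automatic since $\theta\le -2<\theta_+$). Both of these cases are covered by the non-existence statement Theorem~\ref{est}(3), which asserts that \eqref{bun} with $h\equiv 0$ has no solutions in Case $(\mathcal M_1)$ and in Case $(\mathcal N)$. This settles part~2. The boundary value $\lambda=\lambda_H$ causes no difficulty: there $\theta_-=\theta_+=(N-2)(q-1)/2-2>-2\ge\theta$, so $\theta<\theta_-$ and we sit in Case $(\mathcal M_1)$, again handled by Theorem~\ref{est}(3).

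Since the corollary is a transcription of Theorem~\ref{est} through the case dictionary, there is no genuine analytic obstacle here; the only thing to get right is the bookkeeping of which case each pair $(\lambda,\theta)$ belongs to. The one point worth flagging is the elementary verification $\theta_+>-2$, which is precisely what guarantees that singular solutions (and hence the multiplicity phenomenon of Case $(\mathcal M_2)$) cannot occur for $\theta\le -2$; this is the reason the structure collapses to the clean dichotomy of a unique solution when $\lambda>\lambda_H$ and no solution when $\lambda\le\lambda_H$.
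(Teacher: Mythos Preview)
Your proposal is correct and follows the same approach implicit in the paper: Corollary~\ref{ouk2} is presented there without proof as a direct transcription of Theorem~\ref{est} (together with Theorem~\ref{mar} for the asymptotic in part~1), once one observes that $\theta\le -2<\theta_+$ excludes Case~$(\mathcal M_2)$ whenever $\lambda\le\lambda_H$. The paper even records the key inequality $\theta_+>-2$ explicitly when introducing Case~$(\mathcal M_2)$, so your bookkeeping matches theirs exactly.
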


\begin{remark} For problem \eqref{eq1} with $\Omega=\mathbb R^N$, we observe from Corollary~\ref{ou1} that $\lambda^*=\lambda^*(N,q,\theta)$ in \eqref{crit} is the threshold for $\lambda$ no matter how we fix $\theta\in \mathbb R$ and $q>1$. In addition, unless $q=q_{N,\theta}$ (relevant for $\theta>-2$), we see that the threshold $\lambda^*$ is {\em less} than $\lambda_H$.    
	
	On the other hand, when considering problem \eqref{bun} with $h=0$, the threshold for $\lambda$ becomes $\lambda_H$ 
	%whenever $\theta\leq -2$ and $q>1$, as well as when 
	for every $\theta\in \mathbb R$ and $q> \max\{q_{N,\theta},1\}$. 
	%Otherwise, the threshold for $\lambda$ remains $\lambda^*$. %when $\theta>-2$ and $1<q\leq q_{N,\theta}$. 
\end{remark}

To complete our comparison, we next consider $h\not\equiv 0$ in \eqref{bun}.

\begin{corollary} \label{oa2} Fix $\theta> -2$ and $q>1$. Assume $h\not\equiv 0$ in \eqref{bun}. Then, for every $\lambda\in \mathbb R$, problem \eqref{bun} has at least a solution $u_h$.
	
	\begin{enumerate} 
		\item[{\rm (I)}] There is only one solution $u_h$ exactly in the following cases:
		\begin{equation} \label{unim}
		{\rm (a)}\ \lambda>\lambda_H; \ \ {\rm (b)}\ \
		\lambda^*<\lambda\leq \lambda_H\ \ {\rm and}\ \  q>q_{N,\theta};\quad {\rm (c)} \ \lambda\leq \lambda^*.
		\end{equation}
		In cases {\rm (a)} and {\rm (b)}, %of \eqref{unim}, 
		the solution $u_h$ satisfies $\lim_{|x|\to 0} u_h(x)/U_0(x)=1$.
		Furthermore, in case {\rm (c)}, we distinguish three situations:
		\begin{enumerate}
			\item[$(c_1)$] If $\lambda=\lambda^*$ and $q>q_{N,\theta}$, then $u_h$ satisfies \eqref{110}.  
			\item[$(c_2)$] If $\lambda=\lambda^*$ and $q=q_{N,\theta}$, then $u_h$ satisfies \eqref{111}.  
			\item[$(c_3)$] If ($\lambda=\lambda^*$ and $q<q_{N,\theta}$) or $\lambda<\lambda^*$, then 
			$\lim_{|x|\to 0} |x|^{p_-}\,u_h(x)\in (0,\infty)$. 
		\end{enumerate}
		\item[{\rm (II)}] For $\lambda^*<\lambda\leq \lambda_H$ and $q<q_{N,\theta}$, the set of 
		all solutions of \eqref{bun} is 
		$ \{u_h^{(\gamma)}:\ 0\leq \gamma\leq \infty \}$, where $u_h^{(\gamma)}$ is the unique solution of \eqref{bun}, subject to $\lim_{|x|\to 0} u(x)/\Phi_\lambda^+(x)=\gamma$. 
		We have $\lim_{|x|\to 0} u_h^{(\gamma)}(x)/U_0(x)=1$ if $\gamma=\infty$ and $\lim_{|x|\to 0} |x|^{p_-}\,u_h^{(\gamma)}(x)\in (0,\infty)$ if $\gamma=0$. 
	\end{enumerate}
	% fact, there exists a unique solution $u_h$ of \eqref{bun} the 
	%	\begin{enumerate}
	%		\item[(a)] $\lambda>\lambda_H$;
	%		\item[(b)] $\lambda^*<\lambda\leq \lambda_H$ and $q>q_{N,\theta}$;
	%		\item[(c)] $\lambda\leq \lambda^*$. 		
	%		\end{enumerate}
	% $\lambda\in\mathbb R$. 
\end{corollary}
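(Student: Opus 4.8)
The plan is to read Corollary~\ref{oa2} off directly from Theorem~\ref{est} (existence, uniqueness, multiplicity), Theorem~\ref{mar} (behavior near zero in Cases $(\mathcal U)$ and $(\mathcal M_1)$) and Theorem~\ref{th-cd} (behavior near zero in Case $(\mathcal N)$), after rewriting the case labels $(\mathcal U),(\mathcal M_1),(\mathcal M_2),(\mathcal N)$---which are phrased through the position of $\theta$ relative to $\theta_\pm$---as conditions on $\lambda$ relative to $\lambda_H$ and $\lambda^*$ and on $q$ relative to $q_{N,\theta}$. No new analysis is needed; the content is a reparametrization of results already proved.

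First I would establish the dictionary. With $p_\pm,\theta_\pm$ from \eqref{tat} and $\Theta$ from \eqref{sigma}, the inequalities $\theta<\theta_-$ and $\theta>\theta_+$ are the same as $\Theta<p_-$ and $\Theta>p_+$, respectively. Since $\Theta=(\theta+2)/(q-1)$, a direct computation gives $\Theta<(N-2)/2$ if and only if $q>q_{N,\theta}$ (here $\theta>-2$ guarantees $q_{N,\theta}>1$). Because $p_\pm=\frac{N-2}{2}\pm\sqrt{\lambda_H-\lambda}$ and $\lambda^*=\lambda_H-(\frac{N-2}{2}-\Theta)^2$ by \eqref{crit}, squaring $\sqrt{\lambda_H-\lambda}<|\frac{N-2}{2}-\Theta|$ shows that $\Theta<p_-$ is equivalent to $q>q_{N,\theta}$ together with $\lambda^*<\lambda\le\lambda_H$, that $\Theta>p_+$ is equivalent to $q<q_{N,\theta}$ together with $\lambda^*<\lambda\le\lambda_H$, and that $p_-\le\Theta\le p_+$ is equivalent to $\lambda\le\lambda^*$. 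Hence, with $\theta>-2$ and $q>1$ fixed, Case $(\mathcal U)$ is $\lambda>\lambda_H$, Case $(\mathcal M_1)$ is $\{q>q_{N,\theta},\ \lambda^*<\lambda\le\lambda_H\}$, Case $(\mathcal M_2)$ is $\{q<q_{N,\theta},\ \lambda^*<\lambda\le\lambda_H\}$, and Case $(\mathcal N)$ is $\lambda\le\lambda^*$; when $q=q_{N,\theta}$ one has $\lambda^*=\lambda_H$, so the intermediate strip is empty and only $(\mathcal U)$ and $(\mathcal N)$ occur.

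With this dictionary the assertions follow. Existence for every $\lambda$ (with $h\not\equiv0$) is Theorem~\ref{est}(1) in $(\mathcal U)$, Theorem~\ref{est}(2) in $(\mathcal M_1)$ and $(\mathcal N)$, and Theorem~\ref{est}(4) in $(\mathcal M_2)$. The three unique-solution cases $\lambda>\lambda_H$, $\{\lambda^*<\lambda\le\lambda_H,\ q>q_{N,\theta}\}$, $\lambda\le\lambda^*$ are exactly $(\mathcal U),(\mathcal M_1),(\mathcal N)$, where Theorem~\ref{est}(1)--(2) gives uniqueness; their complement inside $\lambda^*<\lambda\le\lambda_H$ is $q<q_{N,\theta}$, i.e. $(\mathcal M_2)$, which by Theorem~\ref{est}(4)(a) carries the full family $\{u_h^{(\gamma)}:0\le\gamma\le\infty\}$, establishing both part (II) and the word ``exactly'' in (I). For the profiles near zero, in (a) and (b) Theorem~\ref{mar} yields $\lim_{|x|\to0}u_h/U_0=1$ via \eqref{kop}. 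In (c) I would match the sub-cases against Theorem~\ref{th-cd}(iii): $\lambda=\lambda^*$ with $q>q_{N,\theta}$ forces $\Theta=p_-<(N-2)/2$, i.e. $\theta=\theta_-<\theta_+$ with $\lambda<\lambda_H$, which is $(\mathcal N_2)$ and gives \eqref{110} ($=(c_1)$); $\lambda=\lambda^*$ with $q=q_{N,\theta}$ forces $\lambda=\lambda_H$ and $\theta=\theta_-=\theta_+$, which is $(\mathcal N_3)$ and gives \eqref{111} ($=(c_2)$); finally $\lambda=\lambda^*$ with $q<q_{N,\theta}$ gives $\Theta=p_+$, i.e. $\theta=\theta_+$, and $\lambda<\lambda^*$ gives $p_-<\Theta<p_+$, both lying in $\theta_-<\theta\le\theta_+$ with $\lambda<\lambda_H$, i.e. $(\mathcal N_1)$, whence $\lim_{|x|\to0}|x|^{p_-}u_h\in(0,\infty)$ ($=(c_3)$). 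The $\gamma=\infty$ and $\gamma=0$ endpoints in (II) are read directly from Theorem~\ref{est}(4) and (4)(a).

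The only point requiring care---and thus the main, if modest, obstacle---is the bookkeeping at the boundary values $\lambda=\lambda^*$ and $q=q_{N,\theta}$: one must track which root, $p_-$ or $p_+$, the value $\Theta$ equals according to the sign of $\frac{N-2}{2}-\Theta$ (controlled by the sign of $q-q_{N,\theta}$), and verify that the degenerate coincidence $\lambda^*=\lambda_H$, $p_-=p_+$ at $q=q_{N,\theta}$ lands precisely in $(\mathcal N_3)$. Once these equalities are correctly assigned to $(\mathcal N_1),(\mathcal N_2),(\mathcal N_3)$, the correspondence with $(c_3),(c_1),(c_2)$ is forced and the proof is complete.
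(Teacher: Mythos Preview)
Your proposal is correct and follows exactly the route the paper intends: Corollary~\ref{oa2} is not given a separate proof in the paper because it is meant to be read off from Theorem~\ref{est}, Theorem~\ref{mar}, and Theorem~\ref{th-cd} after translating the case labels $(\mathcal U),(\mathcal M_1),(\mathcal M_2),(\mathcal N)$ into conditions on $\lambda$ and $q$, which is precisely the dictionary you set up and apply. Your handling of the boundary cases $\lambda=\lambda^*$ (matching $\Theta=p_-$ or $\Theta=p_+$ to the sign of $q-q_{N,\theta}$) and of the degenerate point $q=q_{N,\theta}$, $\lambda^*=\lambda_H$ is correct.
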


\begin{corollary} \label{oa1}
	Fix $\theta\leq -2$ and $q>1$. Assume $h\not\equiv 0$ in \eqref{bun}. Then, for every $\lambda\in \mathbb R$, there exists a unique solution $u_h$ for \eqref{bun}. Moreover, we have:
	\begin{enumerate} 
		\item[{\rm (i)}]	If $\lambda>\lambda^*$, then $u_h(x)/U_0(x)\to 1$ as $|x|\to 0$. 
		\item[{\rm (ii)}] If $\lambda= \lambda^*$, then $u_h$ satisfies \eqref{110};
		\item[{\rm (iii)}] If $\lambda<\lambda^*$, then 	
		$\lim_{|x|\to 0} |x|^{p_-}\,u_h(x)\in (0,\infty)$.
	\end{enumerate}	
\end{corollary}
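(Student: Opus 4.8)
The plan is to reduce Corollary~\ref{oa1} entirely to Theorems~\ref{mar}, \ref{est} and \ref{th-cd} by matching the ranges of $\lambda$ (with $\theta\le -2$ and $q>1$ fixed) to the four cases $(\mathcal U)$, $(\mathcal M_1)$, $(\mathcal M_2)$ and $(\mathcal N)$. The organizing observation is the algebraic identity $\ell=\lambda-\lambda^*$, immediate from comparing the definition of $\ell$ in \eqref{sigma} with that of $\lambda^*$ in \eqref{crit}. Thus the sign of $\ell$ is governed by the position of $\lambda$ relative to $\lambda^*$, which is exactly the trichotomy in the statement.

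Next I would record the preliminary facts forced by $\theta\le -2$ and $q>1$. Here $\Theta=(\theta+2)/(q-1)\le 0$, whence $\lambda^*=\Theta(N-2-\Theta)\le 0<\lambda_H$ (so $\lambda^*<\lambda_H$ strictly, and $q_{N,\theta}\le 1<q$ by \eqref{qnt}). Moreover $p_+\ge (N-2)/2>0\ge \Theta$, so $\Theta<p_+$ always; consequently $\theta<\theta_+$ and Case $(\mathcal M_2)$ can never occur for $\theta\le -2$. With these in hand the classification reads: if $\lambda>\lambda^*$ (that is $\ell>0$) then we are in Case $(\mathcal U)$ when $\lambda>\lambda_H$, and in Case $(\mathcal M_1)$ when $\lambda^*<\lambda\le\lambda_H$ (since $\ell>0$ together with $\Theta<p_+$ forces $\Theta<p_-$, using \eqref{tat}); if $\lambda=\lambda^*$ (that is $\ell=0$) then $\Theta\in\{p_-,p_+\}$, and since $\Theta\le 0<p_+$ we must have $\Theta=p_-$, i.e. $\theta=\theta_-<\theta_+$ with $\lambda<\lambda_H$, which is subcase $(\mathcal N_2)$; and if $\lambda<\lambda^*$ (that is $\ell<0$) then $p_-<\Theta<p_+$, i.e. $\theta_-<\theta<\theta_+$ with $\lambda<\lambda_H$, which is subcase $(\mathcal N_1)$.

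The existence and uniqueness of $u_h$ then follows at once from Theorem~\ref{est}: part (1) covers Case $(\mathcal U)$ for any $h$, while part (2) covers Cases $(\mathcal M_1)$ and $(\mathcal N)$ under the standing hypothesis $h\not\equiv 0$. For the profiles near zero I would invoke the relevant classification in each regime. When $\lambda>\lambda^*$ (Cases $(\mathcal U)$ and $(\mathcal M_1)$), Theorem~\ref{mar} gives \eqref{kop}, which is conclusion (i). When $\lambda=\lambda^*$ I would use item $(\mathcal N_2)$ of Theorem~\ref{th-cd} to obtain \eqref{110}, conclusion (ii); here I rely on the extension recorded in the remark following Theorem~\ref{th-cd}, namely that $(\mathcal N_2)$ remains valid for $\lambda\le 0$, which applies since $\lambda=\lambda^*\le 0$. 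When $\lambda<\lambda^*$ I would use item $(\mathcal N_1)$, valid for every $\theta_-<\theta\le\theta_+$ by the same remark, to obtain \eqref{gin}, i.e. $\lim_{|x|\to 0}|x|^{p_-}u_h(x)\in(0,\infty)$, conclusion (iii).

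The entire analytic substance of the three asymptotics is already contained in Theorems~\ref{mar} and \ref{th-cd}, so the main obstacle is not analysis but careful bookkeeping: confirming, via $\Theta\le 0$, $p_+>0$ and $\ell=\lambda-\lambda^*$, that for $\theta\le -2$ the $\lambda$-axis is partitioned exactly as (i)--(iii), and that the degenerate boundary situations never intervene. In particular one must check that Case $(\mathcal M_2)$ is ruled out (because $\theta_+>-2\ge\theta$) and that the subcase $\theta=\theta_-=\theta_+$ of $(\mathcal N_3)$ is excluded, since it would require $\lambda=\lambda_H$ whereas every occurrence of Case $(\mathcal N)$ here satisfies $\lambda\le\lambda^*<\lambda_H$.
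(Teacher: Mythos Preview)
Your proposal is correct and matches the paper's intended approach: Corollary~\ref{oa1} is presented in Section~\ref{com-rem} as a direct reformulation of Theorem~\ref{est} together with the classification Theorems~\ref{mar} and \ref{th-cd}, and your proof carries out precisely this translation. The bookkeeping is accurate, including the key identity $\ell=\lambda-\lambda^*$, the exclusion of Case~$(\mathcal M_2)$ via $\theta_+>-2\ge\theta$, the exclusion of $(\mathcal N_3)$ via $\lambda^*<\lambda_H$, and the appeal to the remark after Theorem~\ref{th-cd} to extend $(\mathcal N_1)$ and $(\mathcal N_2)$ beyond $\theta>-2$ and to $\lambda\le 0$.
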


\begin{remark} \label{mis}
	Let $\theta,\lambda\in \mathbb R$ and $q>1$. No solution of \eqref{bun} with $h\not\equiv 0$ can be extended as a solution of \eqref{eq1} in $\mathbb R^N\setminus \{0\}$ in the situations below:
	\begin{enumerate}
		\item $\lambda\leq \lambda^*$;
		\item $\lambda>\lambda_H$ and $h\not\equiv U_0|_{\partial\Omega}$.
	\end{enumerate}
	On the other hand, if $\lambda^*<\lambda\leq \lambda_H$ (for $q\not=q_{N,\theta}$), then we see that 
	\begin{enumerate}
		\item for $q>\max\,\{q_{N,\theta},1\}$, the unique solution $u_h$ of \eqref{bun} can be extended to a solution of 
		\eqref{eq1} in $\mathbb R^N\setminus \{0\}$ provided that either $h=U_0|_{\partial\Omega}$ or $h=U_{\gamma,q,\lambda}|_{\partial\Omega}$ for some $\gamma\in (0,\infty)$, where $U_{\gamma,q,\lambda}$ is the unique solution of \eqref{eq1} in $\mathbb R^N\setminus \{0\}$ that satisfies \eqref{m11}.  
		\item for $q<q_{N,\theta}$ (only when $\theta>-2$) and every $\gamma\in (0,\infty]$ (but not for $\gamma=0$), the unique solution $u_h^{(\gamma)}$ of \eqref{bun}, subject to $\lim_{|x|\to 0} u(x)/\Phi_\lambda^+(x)=\gamma$ can be extended to a  
		solution of 
		\eqref{eq1} in $\mathbb R^N\setminus \{0\}$ provided that $h=U_0|_{\partial\Omega}$ for $\gamma=\infty$ and 
		$h=U_{\gamma,q,\lambda}|_{\partial\Omega}$ for $\gamma\in (0,\infty)$, where 
		$U_{\gamma,q,\lambda}$ is here the unique solution of \eqref{eq1} in $\mathbb R^N\setminus \{0\}$ that satisfies \eqref{bio}.  
	\end{enumerate}
	These facts follow by comparing Corollaries~\ref{oa2} and \ref{oa1} with Corollary~\ref{ou1}. 
\end{remark}

\end{document}